\definecolor{refkey}{gray}{.75}
\definecolor{labelkey}{gray}{.5}
\colorlet{DarkGreen}{green!50!black}
\colorlet{DarkGray}{gray!60!black}
\numberwithin{equation}{section}
\newcommand{\ignore}[1]{}
\renewcommand{\epsilon}{\varepsilon}
\newcommand{\one}{\mathbf{1}}
\newcommand{\biota}{\boldsymbol{\iota}}
\newcommand{\zero}{\mathbf{0}}
 \definecolor{refkey}{gray}{.5}
 \definecolor{labelkey}{gray}{.5}
\definecolor{light}{gray}{.9}
\newtheorem{theorem}{Theorem}[section]
\newtheorem*{theorem*}{Theorem}
\newtheorem{lemma}{Lemma}[section]
\newtheorem{proposition}{Proposition}[section]
\newtheorem{corollary}{Corollary}[section]
\theoremstyle{definition}{

\newtheorem{definition}[theorem]{Definition}

\newtheorem*{definition*}{Definition}

\newtheorem{remark}{Remark}[section]

}
\newcommand{\bbT}{\mathbb T}
\newcommand{\cC}{\ensuremath{\mathcal C}}
\newcommand{\cE}{\ensuremath{\mathcal E}}
\newcommand{\cF}{\ensuremath{\mathcal F}}
\newcommand{\out}{{\textrm{out}}}
\newcommand{\Ext}{{\mathsf{Ext}}}
\newcommand{\Int}{{\mathsf{Int}}}
\newcommand{\bomega}{{\boldsymbol{\omega}}}
\newcommand{\bgamma}{{\boldsymbol{\gamma}}}
\newcommand{\Comp}{{\rm{Comp}}}
\newcommand{\ord}{{\ensuremath{\mathsf{ord}}}}
\newcommand{\dis}{{\ensuremath{\mathsf{dis}}}}
\newcommand{\tmix}{t_{\textsc{mix}}}
 \renewcommand{\epsilon}{\varepsilon}
\DeclareMathOperator{\diam}{diam}
\newcommand{\ext}{{\mathsf{ext}}}
\newcommand{\tv}{{\textsc{tv}}}
\newcommand{\superimpose}[2]{%
  {\ooalign{$#1\@firstoftwo#2$\cr\hfil$#1\@secondoftwo#2$\hfil\cr}}}
\newcommand{\sbullet}{%
  \hbox{\fontfamily{lmr}\fontsize{.4\dimexpr(\f@size pt)}{0}\selectfont\textbullet}}
\begin{document}

\title{Spatial mixing and the random-cluster dynamics on lattices}

\author[Reza Gheissari]{Reza Gheissari}
\address{R.\ Gheissari\hfill\break
 Departments of Mathematics \\ Northwestern University }
 \email{gheissari@northwestern.edu}

\author[Alistair Sinclair]{Alistair Sinclair}
 \address{A.\ Sinclair\hfill\break
 Computer Science Division \\ UC Berkeley }
 \email{sinclair@berkeley.edu}

\maketitle
\thispagestyle{empty}

\vspace{-.5cm}
\begin{abstract}
    An important paradigm in the understanding of mixing times of Glauber dynamics for 
        spin systems is the correspondence between spatial mixing properties of the models and bounds on the mixing time of the dynamics. This includes, in particular, the classical notions of weak and strong spatial mixing, which have been used to show the best known mixing time bounds in the high-temperature regime for the Glauber dynamics for the Ising and Potts models.
    
    Glauber dynamics for the random-cluster model does not naturally fit into this spin systems framework because its transition rules are not local. In this paper, we present various implications between weak spatial mixing, strong spatial mixing, and the newer notion of spatial mixing within a phase, and mixing time bounds for the random-cluster dynamics in finite subsets of $\mathbb Z^d$ for general $d\ge 2$. These imply a host of new results, including optimal $O(N\log N)$ mixing for the random cluster dynamics on torii and boxes on $N$ vertices in $\mathbb Z^d$ at all high temperatures and at sufficiently low temperatures, and for large values of $q$ quasi-polynomial (or quasi-linear when $d=2$) mixing time bounds from random phase  initializations on torii at the critical point (where by contrast the mixing time from worst-case initializations is exponentially large). In the same parameter regimes, these results translate to fast sampling algorithms for the Potts model on $\mathbb Z^d$ for general~$d$.
\end{abstract}


\section{Introduction}
Let $G=(V,E)$ be a finite graph.  Configurations of the {\it random-cluster model\/}
on~$G$ are sets of edges $\omega\subseteq E$, with an associated probability given
by the {\it Gibbs distribution\/}:
\begin{equation}\label{eqn:RCgibbs}
   \pi_{G,p,q}(\omega) = \frac{1}{Z_{G,p,q}}\, p^{|\omega|} (1-p)^{|E| - |\omega|} q^{|\rm{Comp}(\omega)|},
\end{equation}
where $\rm{Comp}(\omega)$ is the set of connected components in the subgraph $(V,\omega)$ and the normalizing factor $Z_{G,p,q}$ is known as
the {\it partition function}.  Here $p\in[0,1]$ and $q\in[1,\infty)$ are 
parameters\footnote{The random-cluster model is
actually defined for all positive real~$q$; the case of $q\in (0,1)$ is interesting in its own right, but is phenomenologically very different from $q\ge 1$ and thus beyond the scope of the current paper.}.
Note that the distribution~\eqref{eqn:RCgibbs} generalizes the standard 
Erd\"os-R\'enyi random subgraph model (or edge percolation model) on~$G$, with the inclusion of the extra factor~$q^{|\rm{Comp}(\omega)|}$. This factor assigns
higher weight to configurations with more connected components, and this bias
increases with~$q$.

The random-cluster model was introduced in the late 1960s by Fortuin and
Kasteleyn~\cite{FK} as a unifying framework for studying percolation, spin
systems in statistical physics and random spanning trees; see the 
book~\cite{Grimmett} for extensive background.  The connection with spin
systems is via the {\it $q$-state Potts model}, whose configurations are
assignments $\sigma\in\{1,\ldots,q\}^V$ of one of $q$ possible spins, or colors, to each
vertex of~$G$, with associated Gibbs distribution.
\begin{equation}\label{eqn:Pgibbs}
   \pi_{G,p,q}^{\rm Potts}(\sigma) = \frac{1}{Z^{\rm Potts}_{G,\beta,q}}\,\exp(-\beta|{\rm Cut}(\sigma)|),
\end{equation}
where $\beta\ge 0$ is a parameter, ${\rm Cut}(\sigma)$ is the set of edges connecting different spins (i.e., 
the edges in the multi-way cut induced by~$\sigma$), and ${Z^{\rm Potts}_{G,\beta,q}}$ is the partition function.  The case of two spins
($q=2$) is the Ising model.  For integer $q$, under the correspondence
$p=1-\exp(-\beta)$, the random-cluster and Potts models are intimately related,
in the sense that both are marginals of the same joint distribution on
edges and spins (the so-called {\it Fortuin-Kasteleyn (FK)  distribution\/}~\cite{ES}).  Specifically, if~$\sigma$
is chosen from the Potts distribution~\eqref{eqn:Pgibbs}, and we include in $\omega$ the edges connecting equal spins in $\sigma$ independently with probability 
$p = 1-\exp(-\beta)$, then $\omega$ is distributed according
to the random-cluster distribution~\eqref{eqn:RCgibbs}; and conversely, if we pick~$\omega$ from~\eqref{eqn:RCgibbs} and assign spins $\{1,\ldots,q\}$  independently and u.a.r.\ to each
connected component in~$(V,\omega)$, then the resulting spin configuration is distributed according to~\eqref{eqn:Pgibbs}.
However, the random-cluster model is more general in that, unlike the Potts model, it is defined even when $q$ is not an integer.

In this paper we focus for definiteness on the classical setting where $G=\Lambda_n$
is a box of
side-length~$n$ in the $d$-dimensional lattice~${\mathbb Z}^d$.  In this setting,
the random-cluster model undergoes a {\it phase transition\/} at a certain
{\it critical\/} value~$p=p_c(q,d)$: namely, as $n\to\infty$, if $p>p_c$ then
the configuration~$\omega$ has w.h.p.\ a giant component (of size linear in
the number of vertices $N=n^d$), while if $p<p_c$ the largest component is 
w.h.p.\ of size only $O(\log N)$~\cite{DCRT19}.\footnote{For $q=1$, this is just the well-known
phenomenon of emergence of a giant component in Bernoulli percolation.}  For integer $q\ge 2$,
an analogous phase transition occurs in the Potts model: for 
$\beta>\beta_c(q,d)=-\ln(1-p_c(q,d))$, the Gibbs distribution~\eqref{eqn:Pgibbs} 
becomes multi-modal, with each of the $q$ modes corresponding to a {\it phase\/}
in which the configuration is dominated by one of the spins, while for $\beta<\beta_c$
correlations between spins decay exponentially with distance and there is no
long-range order.  The regimes $\beta<\beta_c$ and $\beta>\beta_c$ are referred
to as the {\it high-temperature\/} and {\it low-temperature\/} regimes, respectively,
because of the interpretation of~$\beta$ in statistical physics as an ``inverse temperature" parameter.  With slight abuse of terminology, we shall use the same phrases
to refer to the analogous regimes $p<p_c$ and $p>p_c$ in the random-cluster
model.  For a more precise formulation of this phase transition, see 
Section~\ref{sec:preliminaries}.

We study the natural local dynamics for the random-cluster 
model, analogous to the widely studied Glauber dynamics for the Potts model
that updates a random spin at each step conditional on the spins of its
neighbors.
This random-cluster dynamics, which we abbreviate to {\it FK dynamics}, 
at each step picks a random
edge $e\in E$ and includes~$e$ in the current configuration~$\omega$ with
the correct conditional probability given~$\omega\setminus e$.  
Thus, if the configuration at time $t$ is~$\omega$, then the
configuration at time $t+1$ is obtained as follows:
\begin{enumerate}
\item Pick an edge $e\in E$ uniformly at random
\item Include $e$ in the new configuration with probability  
\begin{align}\label{eq:update-rule}
   \begin{cases}
      \frac{p}{q(1-p)+p} & \hbox{\rm if $e$ is a \emph{bridge} in~$\omega\cup \{e\}$ (i.e., if $|{\rm Comp}(\omega \setminus\{e\})| \ne |{\rm Comp}(\omega\cup \{e\})|$)};\\
      p & \hbox{\rm otherwise}.
   \end{cases}
\end{align}
\end{enumerate}
Since this dynamics is irreducible and reversible w.r.t.\ the Gibbs distribution~$\pi$ in~\eqref{eqn:RCgibbs}, it converges to this distribution from any
initial configuration.  As in all such investigations, the main question is
the {\it mixing time\/} of the chain, i.e., the number of steps until 
it gets close in total variation distance to~$\pi$ from a worst-case initialization.  This is relevant both for bounding the running time
of a MCMC algorithm that samples configurations from~$\pi$, and for understanding
the actual behavior of the Markov chain over time.

The analysis of Glauber dynamics has a long and rich history, and the
dynamics of the Ising model (and to a lesser extent the Potts model) are by now quite well understood. By comparison, the study of the FK dynamics is less well developed and introduces new technical challenges, mainly due to 
the fact that the update rule~\eqref{eq:update-rule} necessarily depends on the
number of connected components, $|\rm{Comp}(\omega)|$, a {\it non-local\/} quantity.
Many of the most important results for the Ising/Potts Glauber dynamics on $\mathbb Z^d$ relate the mixing time
to some form of {\it spatial mixing\/} (i.e., decay of correlations with distance
in the Gibbs distribution).  Our goal in this paper is to overcome the non-locality and develop analogous implications for the FK dynamics.  In the
following paragraphs we explain each of these implications, along with their
consequences for the mixing time of the FK dynamics and sampling from the Potts model in various parameter regimes.

\subsection{Weak spatial mixing}\label{subsec:introwsm}
The most basic notion of spatial mixing is \emph{weak spatial mixing (WSM)}, which
expresses the fact that in $\mathbb Z^d$, or on a torus $\bbT_n = (\mathbb Z/n\mathbb Z)^d$, when the status of the edges in some subset~$A$ of the
configuration are changed, the effect on the distribution of the 
edges in some other set~$B$ decays exponentially with the distance between~$A$
and~$B$. (For a precise formulation, see Definition~\ref{def:WSM}.)

The following theorem says that, whenever the random-cluster model has WSM, the FK dynamics on $\bbT_n$ has very fast (actually, optimal) mixing time.  In general, WSM is believed to be the 
weakest condition under which one can hope to have optimal mixing~\cite{DSVW}.
\begin{theorem}\label{thm:intro1}
Suppose $d\ge 2, q\ge 1, p$ are such that WSM holds for the random-cluster model. Then the mixing time of the FK dynamics on $\bbT_n = (\mathbb Z/n\mathbb Z)^d$ is $O(N\log N)$, where $N= n^d$ is the number of vertices. 
\end{theorem}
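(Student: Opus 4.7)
The plan is to follow the classical Martinelli--Olivieri block-dynamics paradigm that converts spatial mixing into optimal temporal mixing, suitably modified to cope with the non-locality of the FK update rule~\eqref{eq:update-rule}. First, I would introduce an auxiliary $\ell$-\emph{block dynamics} on $\bbT_n$: at each step pick a uniformly random box $B\subset \bbT_n$ of side length $\ell$ (with $\ell$ a large but fixed constant), and resample all edges with both endpoints in $B$ from the conditional random-cluster measure given $\omega\restriction_{E\setminus B}$. A standard Poincar\'e/canonical-paths comparison shows that if the block dynamics has relaxation time $O(N/\ell^d)$, then the single-edge FK dynamics has relaxation time $O(N)$, losing only a multiplicative factor depending on $\ell$; promoting this to $O(N\log N)$ total-variation mixing requires a further input, typically a (modified) log-Sobolev or entropy factorization estimate at the block scale.

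The heart of the argument is then to prove the required entropy (or variance) decay for the block dynamics from WSM. The key observation is that, by WSM, the conditional distribution on an $\ell$-block $B$ is, up to an error exponentially small in $\ell$, insensitive to the details of $\omega\restriction_{E\setminus B}$ far from $\partial B$. One can then set up a recursion in the spirit of Cesi's two-block trick: show that entropy factorization (or a spectral contraction) on blocks of scale $\ell$ implies the same on blocks of scale $2\ell$, with a multiplicative error governed by WSM. Iterating this $O(\log n)$ times bootstraps the local estimate to a global modified-log-Sobolev bound on $\bbT_n$ with an $O(1)$ constant, which then delivers the advertised $O(N\log N)$ mixing.

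The main obstacle is precisely the non-locality of the FK model: conditioning on $\omega\restriction_{E\setminus B}$ does not fix a local boundary state but induces a \emph{wiring}, i.e.\ a partition of the boundary vertices of $B$ reflecting which pairs are connected by paths in $E\setminus B$. This wiring is a global functional of the exterior, so WSM must be invoked in a form strong enough to control dependence on arbitrary wirings uniformly. The natural route is to insert a thick annulus between $B$ and its far exterior and use WSM to show that long-range connection events through the annulus are exponentially suppressed; the distribution inside $B$ can then be coupled to one in which the distant wiring is erased, reducing the analysis to a local object on which the block recursion applies cleanly. Once this non-locality is tamed, the remaining steps follow the classical Martinelli--Olivieri blueprint, and the quantitative losses aggregate into the required $O(N\log N)$ bound.
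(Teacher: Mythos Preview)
Your approach is genuinely different from the paper's, and the difference is instructive.

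\textbf{What the paper does.} The paper avoids block dynamics, entropy factorization, and log-Sobolev machinery altogether. It works directly with the grand monotone coupling of all FK chains $X_{t,B_{m,e}^\xi}^{\omega_0}$ and defines the single-edge disagreement probability
\[
\phi_{m,t}=\max_{e}\,\mathbb P\big(X_{t,B_{m,e}^\one}^\one(e)\ne X_{t,B_{m,e}^\zero}^\zero(e)\big)\,.
\]
By monotonicity this quantity is decreasing in both $m$ and $t$, and a short argument (splitting on whether the two chains have already coupled on a sub-ball $B_{r,e}$ at time~$t$) yields the space--time recurrence
\[
\phi_{2m,2t}\le e^{-r/C}+d(2r)^d\,\phi_{m,t}^2\,,
\]
where the first term comes straight from WSM (comparing $\pi_{B_{r,e}^\one}$ with $\pi_{B_{r,e}^\zero}$ at a single edge). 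Solving the recurrence gives $\phi_{t,t}\le Ce^{-t/C}$, and a union bound over edges converts this into $O(\log N)$ continuous-time mixing on~$\bbT_n$. There is no block chain, no spectral comparison, and no functional inequality.

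\textbf{What your proposal does, and where it is shaky.} Your block-dynamics/Cesi route is a legitimate alternative in principle, but your handling of the non-locality has a gap. You write that one should ``use WSM to show that long-range connection events through the annulus are exponentially suppressed''. Exponential suppression of \emph{open} connections through an annulus is only available when $p<p_c$; at low temperature there is a giant component and such connections are typical, so the annulus trick as you state it fails on one side of the transition. The correct mechanism---and the one the paper exploits---is monotonicity: any wiring induced on $\partial B$ by the exterior is sandwiched between the free and wired boundary conditions, and WSM says precisely that those two extremes induce nearly identical marginals on the interior. Once you replace ``suppress connections through the annulus'' by ``sandwich between $\pi_{B^\one}$ and $\pi_{B^\zero}$ and invoke WSM'', your recursion can be salvaged, but this is exactly the observation that makes the paper's direct coupling argument so short: rather than threading monotonicity through a block-dynamics comparison and an entropy-factorization bootstrap, one may as well run the monotone coupling from the outset and recurse on the disagreement probability itself.

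\textbf{Comparison.} Your route, if repaired, would in principle yield stronger functional-analytic statements (a modified log-Sobolev inequality on $\bbT_n$), at the cost of substantially more infrastructure and a more delicate block-to-edge comparison (itself nontrivial for the FK dynamics because of non-locality). The paper's route gives only a coupling-time bound, but it is elementary, uses nothing beyond monotonicity and WSM, and sidesteps the wiring issue entirely by never conditioning on an arbitrary exterior.
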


This implication is the analog of a similar one for Glauber dynamics for
the Ising model (the $q=2$ case of the Potts model), which is
a landmark result in the field~\cite{MaOl1}.  Indeed, Harel
and Spinka~\cite{HarelSpinka} recently observed, towards a different purpose, 
that the proof technique of~\cite{MaOl1}
in fact extends to more general monotone models than the Ising model, including the
random-cluster model.\footnote{The results of \cite{MaOl1,HarelSpinka} are
actually formulated for the infinite lattice~${\mathbb Z}^d$ rather than
for the torus~$\bbT_n$.} Thus Theorem~\ref{thm:intro1} can be deduced fairly
easily from the arguments in~\cite{HarelSpinka},
but we include it here because the important implication for mixing times in finite volumes is not explicitly derived there.

When $d=2$, WSM is known to hold for the random-cluster model at all non-critical~$p$ for all~$q$ from~\cite{BD1} together with planar duality.
For general $d\ge 3$, WSM was recently shown throughout the high-temperature regime $p<p_c(q,d)$ for all $q$~\cite{DCRT19},
and also throughout the low-temperature regime $p>p_c(2,d)$ for the special case
$q=2$~\cite{DCGR20}. WSM can also easily be seen to hold for all $q$ and $d$ provided the temperature is low enough, i.e., $p> p_0(q,d)$ is 
sufficiently large (see Section~\ref{subsec:wsm}).
In fact, WSM is conjectured to hold at all non-critical temperatures $p\ne p_c(q,d)$ for all $q$ and~$d$~\cite[Section 1.4]{DCGR20}.

Combining these facts with Theorem~\ref{thm:intro1}, we immediately obtain the following.
\begin{corollary}\label{cor:intro1}
For all $d\ge 2$, the FK dynamics on $\bbT_n$ mixes in $O(N \log N)$ time
in the following cases:
\begin{itemize}
\item[(i)] Throughout the high-temperature regime $p<p_c(q,d)$ for all $q\ge 1$.
\item [(ii)] Throughout the low-temperature regime $p>p_c(2,d)$ for $q=2$.
\item [(iii)] At all sufficiently low temperatures $p>p_0(q,d)$ for all  $q\ge 1$.
\end{itemize}
\end{corollary}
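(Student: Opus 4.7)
The plan is simply to invoke Theorem~\ref{thm:intro1} three times, once per regime, so the entire task reduces to verifying that weak spatial mixing holds for the random-cluster model in each of the parameter ranges in $(i)$–$(iii)$. In each case the mixing-time conclusion is then immediate.

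For part~$(i)$ I would split into two subcases. When $d=2$, the random-cluster model admits planar duality, which maps the regime $p<p_c(q,d)$ to $p^*>p_c^*(q,d)$; combined with the RSW-type estimates and the sharpness results of~\cite{BD1}, this yields exponential decay of connection probabilities on both sides of $p_c$ for every $q\ge 1$, and standard monotone-coupling arguments promote this to WSM in the sense of Definition~\ref{def:WSM}. For $d\ge 3$, I would invoke the sharpness of the phase transition proved in~\cite{DCRT19}, which gives exponential decay of point-to-set connection probabilities throughout $p<p_c(q,d)$ for all $q\ge 1$; this is exactly the input needed for WSM. Theorem~\ref{thm:intro1} then supplies the $O(N\log N)$ mixing bound.

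For part~$(ii)$ the relevant input is the low-temperature WSM for the FK-Ising model ($q=2$) established in~\cite{DCGR20} throughout $p>p_c(2,d)$ in arbitrary dimension; Theorem~\ref{thm:intro1} again concludes. For part~$(iii)$, I would use the elementary Peierls/cluster-expansion argument alluded to in Section~\ref{subsec:wsm}: when $p$ is close enough to~$1$, a typical configuration contains almost every edge, and any boundary-induced disagreement between configurations on $A$ and on $B$ must be carried by a "dual contour" of closed edges whose probabilistic weight is exponentially suppressed in its length. Since any such contour separating $A$ from $B$ must have length at least $\dist(A,B)$, a union bound over contours gives exponential decay of influence, i.e.\ WSM.

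The only genuine care needed — and hence the main (very mild) obstacle — is to reconcile the precise formulations: the WSM results cited above are typically stated on $\mathbb{Z}^d$ or with fixed boundary conditions, whereas Theorem~\ref{thm:intro1} requires WSM on the torus $\bbT_n$. This gap is bridged by a short boundary-condition comparison using the monotonicity and finite-energy properties of the random-cluster measure, together with the fact that on $\bbT_n$ all "boundaries" are internal and can be treated by conditioning on a suitable sub-box; this passage is standard and introduces no new hypotheses beyond those already available in each of the cited WSM results.
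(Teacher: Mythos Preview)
Your approach is correct and essentially identical to the paper's: invoke Theorem~\ref{thm:intro1} and verify WSM in each regime via \cite{DCRT19} for~(i), \cite{DCGR20} for~(ii), and a Peierls/subcritical-percolation comparison on closed edges for~(iii) (the paper phrases~(iii) as comparing closed edges to a subcritical percolation on the dual adjacency graph, and formalizes the passage from exponential connectivity decay to WSM as Lemma~\ref{lem:exp-decay-to-WSM}, which is exactly your ``standard monotone-coupling argument''). Two minor remarks: the $d=2$/$d\ge 3$ split in~(i) is unnecessary since \cite{DCRT19} already covers all $d\ge 2$, and your final paragraph's concern is misplaced---WSM in Definition~\ref{def:WSM} is stated on boxes $\Lambda_r$ with wired/free boundaries, not on the torus, so there is no $\mathbb{Z}^d$-versus-$\bbT_n$ gap to bridge.
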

\noindent
Moreover, as mentioned above, if as conjectured WSM holds at all $p\ne p_c(q,d)$,
then Theorem~\ref{thm:intro1} will immediately imply optimal mixing
time on $\bbT_n$ for all non-critical~$p$ for all $q$ and $d$.

Corollary~\ref{cor:intro1} has interesting implications for sampling
algorithms, not only for the random-cluster model but also, in light
of the intimate connection with the Potts model, for that as well: see Section~\ref{subsubsec:sampling}.

\subsection{Strong spatial mixing}\label{subsec:introssm}
Another widely used notion of spatial mixing is known as \emph{strong spatial mixing
(SSM)}; this notion is more adapted to finite domains such as~$\Lambda_n$, 
which we recall is a box of side-length~$n$ in~$\mathbb Z^d$. SSM captures the same correlation decay property between two subsets~$A$ and~$B$ as WSM but in the context of a fixed environment, called a \emph{boundary condition}, which we can understand to be a fixed configuration of edges (or spins)~$\xi$ on the exterior of the region~$\Lambda_n$; we use the notation~$\Lambda_n^\xi$ to denote the presence of boundary condition~$\xi$. (See Section~\ref{subsubsec:bc} for a precise formulation.) Notice that SSM is in general a stronger notion than WSM since the fixed boundary edges~$\xi$ may be very close to the subset~$B$ (much closer than the edges~$A$ whose status is being varied), which may have the effect of increasing the correlation between $A$ and $B$. We emphasize that this discussion includes the case of a \emph{free} boundary (i.e., no fixed edges/spins) which already causes the results of the previous subsection to break down because the transitivity present in~$\bbT_n$ is lost. Indeed any treatment of dynamics in the finite region $\Lambda_n$ must address the boundary condition.

In the classical literature concerning the Ising and Potts Glauber dynamics, the SSM property \emph{uniformly over all possible boundary conditions} is the standard criterion that implies rapid mixing on~$\Lambda_n$, see e.g.,~\cite{MaOl1,MaOl2,Cesi}.
Such results generally require uniformity of SSM over boundary conditions because
they employ a recursive argument in which one loses control of the specific
boundary conditions present at subsequent levels of the recursion.
With the exception of the Ising model~\cite{DSS-IsingSSM}, and the 
Potts model in two dimensions~\cite{BD1,MOS}, where uniform SSM holds throughout the high-temperature
regime, this uniform SSM property is believed to break down 
for $q\ge 3$ in dimensions $d\ge 3$ at some temperature strictly above the critical one.

For the random-cluster model, one cannot expect uniform SSM to hold 
because boundary conditions can enforce long-range correlations even at very high temperatures (and general boundary conditions can even cause exponentially large mixing times~\cite{BGVfull}). On the other hand,
certain special classes of boundary conditions may exhibit SSM and also be amenable
to proving rapid mixing at $p\ne p_c(q,d)$: this approach was taken by~\cite{BS} in $d=2$ for the class of 
{\it side-homogeneous\/} boundary conditions (i.e., all edges present or all absent
on each side of~$\Lambda_n$), for which SSM followed from planar duality and~\cite{Alexander}. Cutoff was then shown in $d\ge 2$ at very high temperatures $p\ll p_c$~\cite{GS}. However, even SSM for side-homogeneous
boundaries is expected to break down in $d>2$ and $q\ne 2$ at temperatures~$p<p_c$ sufficiently close to the critical point.

In this paper, we prove that SSM for the random-cluster model with respect to {\it any fixed\/} boundary
condition~$\xi$ is enough to imply rapid mixing of the FK dynamics on $\Lambda_n^\xi$.  To the best of our knowledge, such boundary-specific implications were not previously known even for the Ising/Potts dynamics.
(We note that our argument actually adapts easily to prove the same implication for
the Ising Glauber dynamics as well, with potentially interesting implications. See Remark~\ref{rem:ising}.)
\begin{theorem}\label{thm:intro2}
Suppose $d\ge 2, q\ge 1, p$ are such that SSM holds for the random-cluster model on $\Lambda_n^\xi$. Then the mixing time of the FK dynamics on $\Lambda_n^\xi$ is $O(N\log N)$. 
\end{theorem}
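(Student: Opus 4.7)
The plan is to adapt the block-dynamics recursion behind Theorem~\ref{thm:intro1} so as to carry the fixed boundary condition $\xi$ through every level of the recursion. The goal is a uniform-in-$n$ lower bound on the log-Sobolev constant of the FK dynamics on $\Lambda_n^\xi$, from which $O(N\log N)$ mixing follows by standard monotone-chain arguments.

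The first step is a two-block decomposition of $\Lambda_n$ into overlapping sub-rectangles $B_1, B_2$ sharing an overlap slab of constant width $\ell$. Following Martinelli--Olivieri and its Harel--Spinka monotone extension underlying Theorem~\ref{thm:intro1}, I would compare the single-edge FK dynamics on $\Lambda_n^\xi$ to the heat-bath block dynamics on $\{B_1, B_2\}$ via the Peres--Winkler censoring inequality (valid for $q \ge 1$ by FKG monotonicity). Cesi's variance- and entropy-factorization identities then bound the block-dynamics spectral gap and log-Sobolev constant below in terms of the corresponding quantities on the sub-blocks, up to a ``disagreement'' term that is essentially the covariance, under conditioning on the slab, between edges in $B_1 \setminus B_2$ and edges in $B_2 \setminus B_1$. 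Iterating the recursion down to sub-boxes of constant size---where the relevant constants are trivially $\Omega(1)$---yields the desired uniform bound. The SSM hypothesis on $\Lambda_n^\xi$ enters in precisely one place: it bounds the disagreement term by $e^{-c\ell}$ at each level, uniformly over $\xi$ and over whatever inner configuration the conditioning produces.

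The main obstacle is the non-locality of the FK update rule: the conditional distribution on a sub-block $B_i$ depends on $\omega$ outside $B_i$ through global connectivity, so the boundary ``seen'' by $B_i$ is a random hybrid of $\xi$ on outer faces and of $\omega$-connections on inner faces. I would handle this via FKG-sandwiching of the $B_i$-marginal between its extremal (fully wired and fully free) outer versions, and invoking SSM on $\Lambda_n^\xi$ to close the discrepancy on the interior of $B_i$; crucially, SSM is assumed uniformly over the hybrid boundaries that arise, so the recursion closes without additional hypotheses. A secondary subtlety is the loss of translation invariance on $\Lambda_n^\xi$ as compared to $\bbT_n$: the recursion must track which portion of $\xi$ each sub-block inherits, but the argument is insensitive to this and proceeds uniformly. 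Finally, the log-Sobolev upgrade from the spectral gap uses Cesi's recursive argument, which again feeds on SSM at all scales and delivers the advertised $O(N\log N)$ mixing.
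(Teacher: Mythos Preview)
Your approach is genuinely different from the paper's, and while the high-level strategy is not unreasonable, several steps are glossed over and at least one is problematic.

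The paper does \emph{not} go through block dynamics, entropy factorization, or log-Sobolev. Instead it runs a direct space--time coupling recursion in the spirit of Martinelli--Olivieri, carried out in finite volume. Concretely, it defines
\[
\phi_{m,t}(e)=\mathbb P\bigl(X_{t,B_{m,e}^{\one}}^{\one}(e)\neq X_{t,B_{m,e}^{\zero}}^{\zero}(e)\bigr),\qquad \phi_{m,t}=\max_e \phi_{m,t}(e),
\]
where $B_{m,e}^{\one},B_{m,e}^{\zero}$ carry $\xi$ on $\partial\Lambda_n$ and wired/free elsewhere. By monotonicity and the Markov property one gets the recursion $\phi_{2m,2t}\le e^{-r/C_\star}+d(2r)^d\phi_{m,t}^2$, with the first term coming exactly from the SSM hypothesis on $\Lambda_n^\xi$. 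Solving the recursion gives $\phi_{t,t}\le Ce^{-t/C}$, and a union bound over edges yields coupling of $X_t^{\one}$ and $X_t^{\zero}$ on $\Lambda_n^\xi$ in time $O(\log n)$. No spatial decomposition of the domain, no functional inequalities, and in particular no need to track log-Sobolev constants of sub-blocks with random boundaries.

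On your proposal, two specific concerns. First, the Peres--Winkler censoring inequality compares total-variation distances (or stochastic orderings), not log-Sobolev constants; it does not by itself transfer a log-Sobolev bound from block dynamics to single-edge dynamics. You would need a separate comparison argument for the Dirichlet forms, and for the non-local FK update rule this is not the off-the-shelf spin-system comparison. Second, your claim that ``SSM is assumed uniformly over the hybrid boundaries that arise'' misstates Definition~\ref{def:ssm-per-bc}: SSM is assumed only for the extremal boundaries $B_{m,e}^{\one},B_{m,e}^{\zero}$. The FKG sandwiching you invoke can indeed bound the disagreement term at each level by the extremal case---this is the correct monotone insight---but the Cesi recursion also requires a lower bound on the log-Sobolev constant of each sub-block with its \emph{random} induced boundary, and you have not explained why that quantity is controlled. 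The paper's coupling argument sidesteps exactly this difficulty: at every scale only the extremal chains appear, so the single SSM assumption for $\xi$ suffices without any uniformity over hybrid boundaries. This is in fact the point the authors emphasize as the novelty of the result.
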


Theorem~\ref{thm:intro2} is useful because certain natural boundary conditions
can be shown to satisfy SSM all the way up to the critical point, thus implying
rapid mixing of the FK dynamics in a largest possible range. The two most canonical
boundary conditions to consider are the {\it free\/} condition (denoted~$\zero$)
and the {\it wired\/} condition (denoted~$\one$),
corresponding to all edges in $\mathbb Z^d\setminus \Lambda_n$ being absent and present,
respectively. We are able to prove SSM for these boundary conditions throughout
the high-temperature regime, as well as at sufficiently low temperatures, leading to the following corollary of Theorem~\ref{thm:intro2}.
\begin{corollary}\label{cor:intro2}
For all $d\ge 2$, the mixing time of the FK dynamics is $O(N\log N)$ in
the following settings:
\begin{itemize}
\item[(i)] On $\Lambda_n^\zero$ throughout the high-temperature regime $p<p_c(q,d)$, for all~$q\ge 1$.
\item[(ii)] On $\Lambda_n^\one$ and $\Lambda_n^\zero$ at all sufficiently low temperatures $p>p_0(q,d)$, for all $q\ge 1$.
\end{itemize}
\end{corollary}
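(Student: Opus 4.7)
The plan is to invoke Theorem~\ref{thm:intro2}, which reduces each item to verifying SSM with respect to the indicated boundary condition in the indicated parameter regime: part~(i) follows once SSM is verified on $\Lambda_n^\zero$ throughout $p<p_c(q,d)$, and part~(ii) follows once SSM is verified on both $\Lambda_n^\zero$ and $\Lambda_n^\one$ for $p>p_0(q,d)$ sufficiently large. The proof therefore decouples into two independent SSM verifications, one for each temperature regime.

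For~(i), I would combine the WSM result on $\mathbb Z^d$ at $p<p_c(q,d)$ of \cite{DCRT19} with monotonicity of the random-cluster model to deduce SSM under the free boundary. The key inputs are that, below $p_c$, the infinite-volume random-cluster measure is unique and two-point connectivities in it decay exponentially with distance. Given two internal perturbations $\tau,\tau'$ of the configuration on a set $A\subset\Lambda_n$, I would run the standard monotone grand coupling of the conditional measures $\pi_{\Lambda_n^\zero}(\,\cdot\mid\tau)$ and $\pi_{\Lambda_n^\zero}(\,\cdot\mid\tau')$; any disagreement on an edge in a distant set~$B$ forces an open path in the coupling from $A$ to that edge. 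Stochastic domination of the free-boundary finite-volume measure by the infinite-volume one then bounds this probability by the infinite-volume connectivity between $A$ and $B$, which decays as $\exp(-c\,\dist(A,B))$ by subcritical exponential decay.

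For~(ii), I would use a low-temperature cluster expansion, equivalently a Peierls-type argument in terms of dual ``defect'' clusters of absent edges, to show that a defect contour of size $k$ carries weight at most $(Cq(1-p))^k$ in the random-cluster measure, so that defects are exponentially suppressed once $p$ is sufficiently close to~$1$. Under the wired boundary, the all-edges-present ground state is compatible with the boundary, and the standard finite-volume contour expansion gives SSM on $\Lambda_n^\one$ directly once $p>p_0(q,d)$ for some explicit threshold. Under the free boundary, the boundary is at odds with this ground state, so one must condition on the outermost dual contour separating the boundary layer from the fully-wired interior; Peierls suppression of this separating contour then propagates SSM from the wired interior out to the full region and yields SSM on $\Lambda_n^\zero$ as well, possibly at the cost of enlarging $p_0$.

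The principal obstacle is the low-temperature SSM for the \emph{free} boundary in~(ii): the free boundary is in tension with the high-$p$ preference for a fully-open configuration, so a one-sided monotonicity argument does not suffice and one must quantitatively suppress the separating contour via the Peierls weight. This is also the step that pins down the value of $p_0(q,d)$, which cannot, by these methods, be pushed anywhere close to $p_c(q,d)$. Once SSM is established in all four settings, Theorem~\ref{thm:intro2} delivers the claimed $O(N\log N)$ mixing time and completes the proof.
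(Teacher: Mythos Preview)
Your overall plan---invoke Theorem~\ref{thm:intro2} and verify SSM for each boundary condition in each regime---is exactly the paper's. For part~(i) your argument also matches: the paper reduces SSM to a connectivity-decay statement (Lemma~\ref{lem:exp-decay-to-SSM}) and then observes that, by monotonicity, the relevant $\ord$-connectivity under free external boundary and wired local boundary is dominated by the connectivity in a fully wired box, for which \cite{DCRT19} gives exponential decay throughout $p<p_c$.

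For part~(ii) your Peierls intuition is correct but you are making the free-boundary case harder than it is. The paper goes through the \emph{second} alternative of Lemma~\ref{lem:exp-decay-to-SSM}, i.e., exponential decay of $\dis$-connectivities (connections through closed edges in the dual adjacency). The point you are missing is that for $q\ge 1$ the closed edges are stochastically dominated by i.i.d.\ Bernoulli with parameter $\tfrac{q(1-p)}{p+q(1-p)}$ \emph{uniformly over all boundary conditions}; once $p$ is close enough to~$1$ this is a subcritical percolation on the dual adjacency graph, and $\dis$-connectivities decay exponentially regardless of whether $\eta=\zero$ or $\eta=\one$. Both boundaries are thus handled by a single one-line comparison, and there is no ``principal obstacle'' for $\Lambda_n^\zero$. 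Your proposed two-step route---establish SSM on $\Lambda_n^\one$, then condition on an outermost separating contour to propagate SSM to $\Lambda_n^\zero$---is not obviously wrong, but it is more delicate (in particular it is not clear from your sketch how it handles balls $B_{m,e}$ that straddle $\partial\Lambda_n$), and it buys nothing over the uniform comparison.
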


\begin{remark}\label{rem:intro-cylinders}
We emphasize that Corollary~\ref{cor:intro2} applies equally to any other class
of boundary conditions for which one can prove the SSM property.  To quote just
one further example, we mention ``cylindrical" boundary conditions, which 
are toroidal in a subset of the coordinate directions and all-free or all-wired in the others. See Remark~\ref{rem:cylinders} for a proof of SSM for these boundaries. 
\end{remark}

\subsubsection{Implications for sampling from the Potts model}\label{subsubsec:sampling} Corollaries~\ref{cor:intro1} and~\ref{cor:intro2} have novel implications
for the existence of sampling algorithms.  Of course, they immediately imply the existence of an efficient
sampling algorithm from the random-cluster distribution~\eqref{eqn:RCgibbs}
in all the specified regimes.  But also, in light
of the intimate connection between the Potts and random-cluster models,
they lead to near-optimal sampling algorithms for the Potts 
distribution~\eqref{eqn:Pgibbs} in several new parameter regimes.  

\begin{corollary}\label{cor:intro3}
For all $d\ge 2$, there is an $O(N\log^3 N)$ time algorithm for sampling from the Gibbs distribution of the $q$-state Potts model in the following scenarios:
\begin{enumerate}[(i)]
    \item On $\bbT_n= (\mathbb Z/n\mathbb Z)^d$ in all the parameter regimes specified in Corollary~\ref{cor:intro1}.
    \item On $\Lambda_n^\zero$ (i.e., no spins are assigned at the boundary) at all high temperatures $\beta<\beta_c(q,d)$, for all~$q\ge 1$.
     \item On $\Lambda_n^{\mathbf k}$ (i.e., all boundary spins have color~$k$) at low enough temperatures $\beta>\beta_0(q,d)$, for all $q\ge 1$.
\end{enumerate}
\end{corollary}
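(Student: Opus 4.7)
The plan is to combine the rapid mixing of the FK dynamics from Corollaries~\ref{cor:intro1} and~\ref{cor:intro2} with the classical Edwards--Sokal coupling between the random-cluster and Potts models. Given the relevant boundary condition on $\Lambda_n$ or $\bbT_n$, one first samples a random-cluster configuration~$\omega$ by running the appropriate FK dynamics from a convenient initialization (e.g.\ $\omega_0 = \emptyset$), and then assigns colors to the connected components of $(V,\omega)$ to produce a Potts sample. Specifically: for (i) on $\bbT_n$ and for (ii) on $\Lambda_n^\zero$, each connected component of $\omega$ receives an independent uniformly random color in $\{1,\ldots,q\}$; for (iii) on $\Lambda_n^{\mathbf k}$ with monochromatic spin boundary, one runs the FK dynamics on $\Lambda_n^\one$, assigns color~$k$ to the (unique) boundary component, and assigns independent uniform colors in $\{1,\ldots,q\}$ to the remaining components. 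Correctness in each case is the standard Edwards--Sokal coupling, and rapid mixing of the FK dynamics is provided by Corollary~\ref{cor:intro1} in case~(i), by Corollary~\ref{cor:intro2}(i) in case~(ii), and by Corollary~\ref{cor:intro2}(ii) in case~(iii). Each yields an FK sample after $O(N\log N)$ Markov chain steps.

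To obtain the claimed $O(N\log^3 N)$ running time, one must argue that each step of the FK dynamics can be implemented in polylogarithmic time. The only nontrivial operation is to test whether the chosen edge~$e$ is a bridge in $\omega\cup\{e\}$, which reduces to a dynamic connectivity query: remove~$e$ (if present) from the maintained structure, query whether its endpoints are still connected in $\omega\setminus\{e\}$, and restore (or permanently update) the edge according to~\eqref{eq:update-rule}. Using the dynamic connectivity data structure of Holm--Lichtenberg--Thorup, each such query and update costs $O(\log^2 N)$ amortized time. Over the $O(N\log N)$ mixing-time steps, this gives a total of $O(N\log^3 N)$. The final component-coloring step takes $O(N)$ time via a standard union-find pass and is absorbed into the bound.

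The main obstacle is not conceptual---the Edwards--Sokal coupling is classical and the FK mixing-time bounds are furnished by the previous corollaries---but rather the polylogarithmic implementation of each Markov chain step, which is forced by the non-locality of the FK transition rule~\eqref{eq:update-rule}. The correctness of the coupling in case~(iii) relies on the fact that under the wired boundary, all vertices of~$\partial\Lambda_n$ lie in a single connected component of $\omega$ together with the wiring edges, so consistently coloring them with spin~$k$ exactly reproduces the Potts Gibbs distribution on $\Lambda_n^{\mathbf k}$; in case~(ii), the free FK boundary exactly corresponds to leaving boundary spins unconstrained. Any other class of boundary conditions for which both SSM holds (so that Theorem~\ref{thm:intro2} applies) and for which the analogous coloring rule is well defined---such as the cylindrical boundaries of Remark~\ref{rem:intro-cylinders}---may be handled identically.
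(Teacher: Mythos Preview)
Your proposal is correct and follows essentially the same approach as the paper: sample a random-cluster configuration via the FK dynamics (using Corollaries~\ref{cor:intro1} and~\ref{cor:intro2} for the $O(N\log N)$ mixing time), convert to a Potts configuration via the Edwards--Sokal coupling, and implement each FK step in $O(\log^2 N)$ amortized time using a dynamic connectivity data structure. The paper cites Wulff-Nilsen rather than Holm--de Lichtenberg--Thorup for the latter, but both yield the required $O(\log^2 N)$ per-step cost; your treatment of the boundary-coloring rule in case~(iii) is more explicit than the paper's, which leaves it implicit.
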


Note that, once
we sample a random-cluster configuration, we can easily convert it to a
Potts configuration using the mechanism described earlier in the introduction.
The extra factor of $O(\log^2 N)$ over the mixing time in Corollary~\ref{cor:intro1} comes from the time needed to implement each
step of the FK dynamics, which requires dynamic maintenance of the connected
components of the configuration~\cite{Wulff-Nilsen}.  

In dimensions $d\ge 3$, Corollary~\ref{cor:intro3} goes well beyond what was previously known.
In particular, it covers the entire high-temperature regime for all~$q$, not only on the
torus but also on~$\Lambda_n^\zero$, the region~$\Lambda_n$ with free
boundary condition. Previously no polynomial time sampling algorithm for the 
Potts model with $q\ge 3$ was known on $\bbT_n$ or $\Lambda_n^\zero$, except at 
sufficiently high
temperatures that the stronger property of SSM uniformly over all boundary
conditions holds~\cite{MaOl2}, or when $q$ is sufficiently large as a function of~$d$~\cite{BCHPT-Potts-all-temp}. Furthermore, in the latter case the polynomial is of high degree, in contrast to the near-optimal runtime of Corollary~\ref{cor:intro3}. We also mention~\cite{BGP} which proves rapid mixing of the Potts Glauber dynamics on general graphs of maximum degree $\Delta$ up to an explicit threshold $\beta \le \frac{1+o_q(1)}{\Delta -1} \log q$, that asymptotically in $q,\Delta$ is roughly half of the critical $\beta_c$ on the lattice of corresponding degree, i.e., $\mathbb Z^{\Delta/2}$.

Corollary~\ref{cor:intro3} also applies at sufficiently low temperatures for all~$q$,
both on the torus and on $\Lambda_n$ with free or 
monochromatic boundary condition.  
(Moreover, subject to the conjecture mentioned earlier that WSM holds at all off-critical temperatures, the restriction to large~$\beta$ is not needed, and our results 
would then cover all parameter regimes where $O(N\log N)$ mixing time of
FK dynamics is to be expected.)
In the low-temperature regime,
polynomial time samplers were available for the Ising model ($q = 2$) \cite{JSIsing,RanWil,GuoJer} on arbitrary graphs,   
but they are of fairly high degree.  
For the Potts model with $q\ge 3$ on $\bbT_n$ and $\Lambda_n^{\mathbf k}$, \cite{BCHPT-Potts-all-temp,HPR-Algorithmic-Pirogov-Sinai} gave the first polynomial time samplers (again of high degree) 
in regimes where a suitable cluster expansion converges; this holds for all $q$ at sufficiently low temperatures, and at all temperatures when~$q$ is sufficiently large as a function of~$d$, but is known to fail at moderate $q$ and $p > p_c(q,d)$. 

\begin{remark}
We conclude this discussion of implications for sampling from the Potts model with a mention of the \emph{Swendsen--Wang (SW) dynamics}, a commonly used global Markov chain that moves on Ising/Potts configurations using the random-cluster representation. 
In~\cite{Ullrich-random-cluster}, comparison results between the SW and random-cluster dynamics showed that their mixing times are within a factor of $O(N)$ of one another. Like the FK dynamics, the analysis of SW is complicated by its non-locality, and for $d \ge 3$ fast mixing on $\bbT_n$ and $\Lambda_n$ were only known on $q=2$~\cite{GuoJer} and for general $q$ at sufficiently high temperatures that \emph{uniform} SSM holds for the Potts model~\cite{BCPSV}. 
By~\cite{Ullrich-random-cluster}, our results imply that under any of the scenarios (i)--(iii) of Corollary~\ref{cor:intro3}, the Swendsen--Wang dynamics has $O(N^2 \log N)$ mixing time. While even polynomial mixing time was not previously known throughout these regimes, the true mixing time should be $\Theta(\log N)$.
\end{remark} 

\subsection{Weak spatial mixing within a phase}\label{subsec:introwsminphase}
As we have mentioned, for the Ising/Potts Glauber dynamics, the notions of WSM and SSM have been the dominant tools for obtaining mixing time bounds at high temperatures $\beta<\beta_c$; as soon as $\beta \ge \beta_c$ both these properties break down, and indeed their mixing times on $\bbT_n  = (\mathbb Z/n\mathbb Z)^d$ become exponentially slow at all $\beta>\beta_c(d)$~\cite{Pisztora96,Bodineau05}. The slowdown is due
to the emergence of an exponentially tight bottleneck between the phases (each corresponding
to dominance by one of the $q$ colors), which coexist at low temperatures.
A generally believed paradigm (which turns out to be quite hard to prove) 
is that the mixing time, when initialized from an appropriate {\it random mixture\/}
of configurations that are representative of each of these phases\footnote{
We note that the notion of mixing time from a specified (possibly random) initialization is slightly weaker than the standard worst-case mixing time, in that the usual ``boosting" property does not apply. See Section~\ref{subsec:mixingtimes} for details.}---in the Ising case, the all-$+1$ and all-$-1$ configurations\footnote{By convention, in the case of the Ising model $q=2$, the two states are usually taken to be $\{-1,+1\}$ rather than $\{1,2\}$} with probability $1/2$ each---should in fact be fast.  A version of this paradigm was recently established in~\cite{GhSi21} for the  Ising model on $\bbT_n$ in its entire low-temperature coexistence regime using a new notion of spatial mixing known as \emph{WSM within a phase}. 
Informally, WSM within a phase requires exponential decay of correlations
between sets~$A,B$ on $\bbT_n$ as before, but now under the Gibbs measure restricted to a phase (e.g., to configurations with majority~$+1$ spins), and only when the configuration on $A$ is set to the extremal configuration of that phase (i.e., 
the all-$+1$ configuration).

Recall that, for the random-cluster model, WSM is expected to hold at all off-critical temperatures $p\ne p_c(q,d)$; the only phase coexistence regime is then {\it exactly at\/} the critical point $p_c(q,d)$, and actually only when $q$ is large enough.\footnote{In $d=2$, there is coexistence at $p_c(q,d)$ as soon as $q>4$~\cite{DGHMT}, and in large dimension as soon as $q>2$. For smaller values of~$q$, the random-cluster model has \emph{no} coexistence regime, and the mixing time should be polynomial even at $p_c$~\cite{LS-critical-2D-Ising,GL2}.} The coexistence here is between the high- and low-temperature phases, corresponding to non-existence and existence of a giant component (called the ``free" or ``disordered" and ``wired" or ``ordered" phases, respectively). Indeed, at $p_c(q,d)$ when $q$ is large, there is an exponential bottleneck between the free and wired phases on $\bbT_n$, and the FK dynamics on $\bbT_n$ slows down exponentially~\cite{BCT,GL1}. (Technically in $d\ge 3$,~\cite{BCT} only applies for integer $q$: for completeness, in Appendix~\ref{app:slow-mixing} we deduce slow mixing for all $d\ge 2$ and general $q\ge q_0$ at $p = p_c + o(1/n)$.)
By analogy with the Ising model, one might conjecture that this slowdown can be overcome by starting the FK dynamics in an appropriate mixture of extremal representatives from its two phases, namely the all-free and all-wired configurations. 

In this paper we define a notion of WSM within a phase for the random-cluster model (see Definition~\ref{def:WSM-within-a-phase} for the details; this definition is technically
more complicated than in the Ising case due to the lack of symmetry between the free and wired phases). We then use this notion to verify the above intuition by first establishing the following upper bound on the mixing time of the FK dynamics \emph{within} each of its phases when initialized from the corresponding extremal configuration. We capture this by considering the dynamics ``restricted to a phase", by which we mean that transitions to the other phase are rejected.

\begin{theorem}\label{thm:intro3}
Suppose $d \ge 2, q\ge 1$ and $p$ are such that there is an exponential bottleneck between the free and wired phases, and WSM within the wired phase
holds.  Then the mixing time of
the FK dynamics on $\bbT_n$ restricted to the wired phase,
starting from the all-wired configuration, is $O(N \log N \cdot \tmix(\Lambda_{C\log N}^{\one}))$.  
A symmetrical statement holds for the free phase.
\end{theorem}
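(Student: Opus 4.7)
The proof strategy is to adapt the Martinelli--Olivieri block-dynamics argument (which underlies Theorem~\ref{thm:intro1}) to the phase-restricted setting, substituting WSM within the wired phase for ordinary WSM. Tile $\bbT_n$ by overlapping blocks $\{B_i\}$ of side length $L = C\log N$ and consider the heat-bath block dynamics that, at each step, selects a uniformly random $B_i$ and resamples the edges in $B_i$ from the wired-phase-restricted conditional distribution given the configuration on $B_i^c$. A standard comparison between block heat-bath and single-edge dynamics (as in~\cite{MaOl1}) reduces Theorem~\ref{thm:intro3} to showing $O(N\log N)$ mixing of this block dynamics from all-wired, at the cost of the multiplicative factor $\tmix(\Lambda_{C\log N}^{\one})$ needed to simulate one block update by single-edge FK steps under worst-case boundary on $B_i$.

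To obtain the block-dynamics bound, I would build a grand coupling between the trajectory $\omega^{\one,t}$ started at all-wired and a trajectory $\omega^{\pi,t}$ started at stationarity within the wired phase. FKG monotonicity (preserved by restriction to the monotone event defining the wired phase) permits the standard monotone coupling of heat-bath updates, and since all-wired is the maximal configuration we have $\omega^{\one,t} \ge \omega^{\pi,t}$ for all $t$. When a block $B_i$ is selected, the two coupled configurations coalesce inside $B_i$ up to the total variation distance between the two conditional distributions on $B_i$ given the (distinct) outside configurations. WSM within the wired phase, interpreted through this monotone sandwich, controls this total variation distance by an expression of order $|\partial B_i|\,e^{-cL}$, which is $\ll N^{-2}$ for $L = C\log N$ with $C$ large. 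A path-coupling argument in the Hamming metric on edge configurations, together with the fact that each block is selected at rate $\Omega(1/N)$, then yields coalescence in $O(N\log N)$ block steps.

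The main obstacle is that WSM within a phase is intrinsically one-sided: by definition it controls the influence on $B_i$ only when the boundary is forced to be the extremal all-wired configuration, whereas in the coupling both $\omega^{\one,t}$ and $\omega^{\pi,t}$ carry generic (non-extremal) wired-phase data on $\partial B_i$. My plan is to route the comparison through an intermediate sample in which the boundary of a slightly enlarged block around $B_i$ is pinned to all-wired, applying WSM within the phase once to bound the discrepancy between each of $\omega^{\one,t}|_{B_i}$ and $\omega^{\pi,t}|_{B_i}$ and this intermediate measure, and combining the two bounds by the triangle inequality. The monotone sandwich is essential here, because the ``discrepancy region'' $\{\omega^{\one,t} \neq \omega^{\pi,t}\}$ consists only of edges present in $\omega^{\one,t}$ and absent in $\omega^{\pi,t}$, so the relevant perturbation from the extremal-boundary intermediate is exactly of the kind controlled by WSM within the wired phase.

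A final technical point, handled by routine large-deviation estimates for the wired phase, is that the rejection rule enforcing the phase restriction has negligible effect on the analysis: starting from a wired-phase configuration, a single block heat-bath update remains in the phase with probability $1-o(N^{-2})$, so the block dynamics agrees with its unrestricted counterpart throughout the $O(N\log N)$ timescale with high probability, and the mixing bound transfers.
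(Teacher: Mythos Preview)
Your proposal has two genuine gaps that the paper's argument is specifically designed to avoid.

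\textbf{Monotonicity of the restricted chain.} You assert that ``FKG monotonicity (preserved by restriction to the monotone event defining the wired phase) permits the standard monotone coupling.'' This is false. Take $\omega \ge \omega'$ both in $\widehat\Omega$, and an update that proposes closing an edge~$e$. If $\omega'\setminus\{e\}\notin\widehat\Omega$ the restricted chain rejects and keeps $\omega'(e)=1$; but since $\omega\setminus\{e\}\ge\omega'\setminus\{e\}$ and $\widehat\Omega$ is increasing, it may well be that $\omega\setminus\{e\}\in\widehat\Omega$, so the larger chain accepts and sets $\omega(e)=0$, destroying the order. The paper states explicitly that ``the first step entails circumventing the absence of monotonicity of the FK dynamics when it is restricted to a phase,'' and does so not by claiming monotonicity but by showing (Lemma~\ref{lem:monotonicity-relations}, using exponential stability as a \emph{separate} hypothesis) that until the exponentially-long hitting time $\widehat\tau^{\widehat\pi}$ of $\partial\widehat\Omega$, the restricted chains coincide with their unrestricted counterparts, which \emph{are} monotone. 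Your last paragraph gestures at exponential stability as a ``final technical point,'' but in fact it is the mechanism that replaces the monotonicity you need throughout.

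\textbf{The block comparison.} Your reduction asserts that simulating one block update costs $\tmix(\Lambda_{C\log N}^{\one})$. But a heat-bath block update on $B_i$ for the \emph{restricted} measure samples from $\widehat\pi_n(\,\cdot\mid\omega(B_i^c))$, which (a) carries the global constraint $\widehat\Omega$ and is not a local random-cluster measure, and (b) even after arguing that the constraint is vacuous with high probability, has boundary condition induced by the current configuration, not the wired boundary. The standard block-to-single-edge comparison would then produce $\tmix(\Lambda_{C\log N}^{\eta})$ for arbitrary $\eta$, not $\tmix(\Lambda_{C\log N}^{\one})$, and the former can be much larger. The paper avoids this entirely: it never passes through block dynamics. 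Instead it bounds the single-edge discrepancy $\mathbb E[\widehat X_t^{\one}(e)]-\widehat\pi_n[\omega_e]$ directly (Proposition~\ref{prop:single-site-relaxation}) by the monotone sandwich $\widehat X_t^{\one}(e)\le X_{t,B_{m,e}^{\one}}^{\one}(e)$ to a \emph{wired}-boundary local chain, then splits into a local relaxation term (controlled by $\tmix(\Lambda_m^{\one})$ via~\eqref{eq:assumption-scale-m}) and an equilibrium term $|\pi_{B_{m,e}^{\one}}[\omega_e]-\widehat\pi_n[\omega_e]|$ (controlled by WSM within the phase). A union bound over edges finishes. This is how the wired boundary, rather than an arbitrary one, enters the final bound.
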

Note that this theorem reduces the mixing time of the FK dynamics restricted
to the wired (resp., free) phase initialized from the extremal 
all-wired (resp., all-free) configuration to that of the (worst-case initialization) mixing time of the dynamics
with wired (resp., free) boundary condition, but at an exponentially smaller scale.
The mixing time with wired boundary conditions is expected to be 
polynomial in the size of the region (by analogy with the
conjectured behavior of the low-temperature Ising model with $+1$ or $-1$ boundary conditions), in which case the mixing time bound in Theorem~\ref{thm:intro3} would be $O(N(\log N)^{C'})$. However, even without this conjecture we can deduce, via a
crude bound on the mixing time at logarithmic scale, that
the mixing time implied by Theorem~\ref{thm:intro3} is at most quasi-polynomial in all
dimensions (and indeed quasi-linear in $d=2$ using a more refined such bound  from~\cite{GL1}). 

Finally, we are able to apply Theorem~\ref{thm:intro3} to establish a version of the
above conjecture about mixing from a random phase initialization in the FK dynamics.  

\begin{theorem}\label{thm:intro4}
Fix $d\ge 2$, $q\ge q_0(d)$ and $p=p_c(q,d)$. The FK dynamics on $\bbT_n$, initialized from a $(\frac{1}{q+1},\frac{q}{q+1})$-mixture of the all-free and the all-wired configurations, satisfies the following: 
\begin{itemize}
    \item If $d=2$, the mixing time is $N^{1+o(1)}$;
    \item If $d\ge 3$, the mixing time is $N^{O((\log N)^{d-2})}$.
\end{itemize}
\end{theorem}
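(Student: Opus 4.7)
The plan is to combine Theorem~\ref{thm:intro3} with a decomposition of the Gibbs measure on $\bbT_n$ into its two coexisting phases. At $p=p_c(q,d)$ with $q\ge q_0(d)$, Pirogov--Sinai-type estimates (of the kind used to establish the slow-mixing bound in Appendix~\ref{app:slow-mixing}) yield a partition of the configuration space $\Omega = \Omega_\one \sqcup \Omega_\zero$ such that: (i) under the FK measure $\pi$ on $\bbT_n$, $\pi(\Omega_\one) = \tfrac{q}{q+1} + o(1)$ and $\pi(\Omega_\zero) = \tfrac{1}{q+1} + o(1)$, reflecting the $q$-fold Potts color symmetry of the wired phase; and (ii) the conductance of the FK dynamics between $\Omega_\one$ and $\Omega_\zero$ is at most $\exp(-c n^{d-1})$, which is precisely what causes the exponential slowdown from worst-case starts. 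Writing $\pi_\one := \pi(\,\cdot \mid \Omega_\one)$ and $\pi_\zero := \pi(\,\cdot \mid \Omega_\zero)$, one then has $\pi = \tfrac{q}{q+1}\pi_\one + \tfrac{1}{q+1}\pi_\zero$ up to $o(1)$ total-variation error, exactly matching the weights in the prescribed initial distribution.

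The central step is to show that, up to negligible total-variation error, the unrestricted FK dynamics started from the all-wired configuration coincides with the dynamics restricted to $\Omega_\one$ for a time window long enough for the restricted chain to mix. Theorem~\ref{thm:intro3} supplies a mixing time of $T_\one := O(N\log N \cdot \tmix(\Lambda_{C\log N}^\one))$ for the wired-phase-restricted chain from the all-wired start, and symmetrically $T_\zero := O(N\log N \cdot \tmix(\Lambda_{C\log N}^\zero))$ for the free-phase-restricted chain from all-free. The bottleneck bound (ii) implies that, started from any configuration in $\Omega_\one$, the unrestricted chain leaves $\Omega_\one$ only after time $\exp(c n^{d-1}/2)$ with probability $1 - e^{-\Omega(n^{d-1})}$; since $T_\one, T_\zero$ are both far below this scale (indeed, at most $\exp(O((\log N)^{d-1}))$ under any reasonable bound on $\tmix(\Lambda_{C\log N}^{\one,\zero})$), a standard coupling that runs the restricted and unrestricted chains together until exit shows that the unrestricted dynamics at time $T_\one$ (resp.\ $T_\zero$) is $o(1)$-close to $\pi_\one$ (resp.\ $\pi_\zero$).

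Combining these, the unrestricted chain at time $T := \max(T_\one, T_\zero)$, started from the prescribed $(\tfrac{1}{q+1},\tfrac{q}{q+1})$-mixture of all-free and all-wired, is within $o(1)$ of $\tfrac{q}{q+1}\pi_\one + \tfrac{1}{q+1}\pi_\zero$, which is itself $o(1)$-close to $\pi$ by the decomposition above. The two bounds in the theorem then follow by substituting known estimates on the log-scale mixing time: for $d=2$, the quasi-linear estimate from~\cite{GL1} yields $\tmix(\Lambda_{C\log N}^{\one,\zero}) = N^{o(1)}$, and hence $T = N^{1+o(1)}$; for $d\ge 3$, even the crude surface-area bound $\tmix(\Lambda_{C\log N}^{\one,\zero}) = \exp(O((\log N)^{d-1}))$ suffices to give $T = N^{O((\log N)^{d-2})}$. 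The main technical obstacle is establishing the phase decomposition (i)--(ii) with the correct weights at exactly $p=p_c(q,d)$ (or as flagged in the statement of slow mixing, at $p=p_c+o(1/n)$) for general $q\ge q_0(d)$ and all $d\ge 2$; this is the content of the slow-mixing estimates derived in Appendix~\ref{app:slow-mixing}, and feeding them into the scheme above delivers the stated mixing-time bounds.
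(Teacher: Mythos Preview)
Your overall scheme—decompose $\pi_n$ into two phases with weights $(\tfrac{1}{q+1},\tfrac{q}{q+1})$, apply Theorem~\ref{thm:intro3} within each phase, couple the restricted and unrestricted chains over the relevant time window, and recombine—is exactly the paper's route (see Corollary~\ref{cor:random-phase-initialization-general-weight} and the short proof of Theorem~\ref{thm:intro4} in Section~\ref{sec:learning-weights}). However, there are two genuine gaps.

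\medskip
\textbf{Missing verification of the hypothesis of Theorem~\ref{thm:intro3}.} You invoke Theorem~\ref{thm:intro3} as a black box, but its statement explicitly assumes \emph{WSM within the wired phase} (and symmetrically within the free phase). The paper identifies this as ``the core of the proof of Theorem~\ref{thm:intro4}'' (see the paragraph after Theorem~\ref{thm:intro4} in the introduction), and establishing it (Lemma~\ref{lem:large-q-wsm-within-phase}) occupies much of Sections~\ref{sec:q-cluster-expansion}--\ref{sec:equilibrium-estimates}: one needs conditional exponential tails on contours under $\pi_\ord$ and $\pi_\dis$ (Lemma~\ref{lem:conditional-exp-decay-contours}), equivalence of $\Omega_\ord,\Omega_\dis$ with $\widehat\Omega,\widecheck\Omega$ (Lemma~\ref{lem:equivalence-of-ord-and-hat}), and a nontrivial coupling between $\pi_{\Lambda_r^\biota}$ and $\widehat\pi_n$ (resp.\ $\widecheck\pi_n$) that handles the conditioning on $\widehat\Omega$ via the event~$\mathcal E_\dis$ of Lemma~\ref{lem:conditioning-dropped}. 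You list the ``main technical obstacle'' as the phase decomposition (i)--(ii), but that is comparatively routine; the real work is WSM within a phase, without which Theorem~\ref{thm:intro3} cannot be applied.

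\medskip
\textbf{The exit-time step is not justified as written.} You assert that the bottleneck bound (ii) implies that ``started from any configuration in $\Omega_\one$, the unrestricted chain leaves $\Omega_\one$ only after time $\exp(cn^{d-1}/2)$ with probability $1-e^{-\Omega(n^{d-1})}$.'' A conductance bound does not give this: a configuration in $\partial\widehat\Omega$ can leave in a single step. The paper's argument (Lemma~\ref{lem:monotonicity-relations} and~\eqref{eq:monotonicity-relations}) instead uses monotonicity of the grand coupling to show $\mathbb P(\widehat\tau^\one\le t)\le \mathbb P(\widehat\tau^{\widehat\pi}\le t)$, and then bounds the latter via stationarity and exponential stability of $\widehat\Omega$ (Definition~\ref{def:exponentially-stable}, Lemma~\ref{lem:exponential-stability-proof}). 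So the coupling of restricted and unrestricted chains works only for the specific extremal start $\one$ (resp.\ $\zero$), via monotonicity, not for arbitrary starts in the phase.
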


Given Theorem~\ref{thm:intro3}, the core of the proof of Theorem~\ref{thm:intro4} is to show that, for $q\ge q_0(d)$,
the random-cluster model satisfies WSM within both the wired and free phases at~$p_c$
(see Lemma~\ref{lem:large-q-wsm-within-phase}). Theorem~\ref{thm:intro3},
together with crude mixing time estimates for extremal boundary
conditions at logarithmic scale, then
implies the stated mixing time bounds within each phase at~$p_c$.
{\it Simultaneous\/} mixing within each phase at~$p_c$
then easily implies mixing for the entire Gibbs distribution~\eqref{eqn:RCgibbs}
at~$p_c$, provided the all-free and all-wired configurations are correctly weighted in the initialization.
We recall that, in contrast, the mixing time on $\bbT_n$ from a worst-case initialization 
suffers an exponential slowdown at~$p_c$.

A potential shortcoming of Theorem~\ref{thm:intro4} is that a priori knowledge of $p_c$ is a delicate matter (there is no exact expression for it in $d\ge 3$). Relatedly, if $p$ is not exactly $p_c(q,d)$ but is microscopically close, the weights accorded to the wired and free phases may both be $\Omega(1)$ but different from the values $(1,q)$ exactly at~$p_c$. 
It turns out that WSM within a phase holds for the wired phase {\it uniformly\/} over $p\ge p_c - o(1/n)$ and for the free phase {\it uniformly\/} over $p\le p_c - o(1/n)$. These behaviors can therefore be stitched together to ensure that sampling from random phase initializations of the form in Theorem~\ref{thm:intro4} satisfies the above bounds at \emph{all} temperatures, provided the weighting of the free and wired initializations is accordingly adjusted: see Corollary~\ref{cor:random-phase-initialization-general-weight}. 
In Lemma~\ref{lem:learning-weights} we describe an MCMC algorithm whose run-time is at most $N^{O((\log N)^{d-2})}$ to {\it learn\/} the relative weights of the two phases---when both are non-negligible---to sufficient accuracy by random sampling within each phase, along sequences of $p$ that approach $p_c$ from above and below.

\begin{remark}
As stated earlier, the quasi-polynomial mixing times in Theorem~\ref{thm:intro4}
immediately become polynomial (or indeed $\tilde O(N)$) if as conjectured the
mixing time of FK dynamics with wired and free boundary is shown to be polynomial at $p_c$. Similarly, the time to approximately learn the relative phase weights for $p\approx p_c$ would become some low-degree polynomial in~$N$.
\end{remark}

In the same fashion as Corollary~\ref{cor:intro3}, our Theorem~\ref{thm:intro4} 
has implications for sampling from the Potts model at, and around, its critical
point $\beta_c = -\ln(1-p_c)$.
In~\cite{BCHPT-Potts-all-temp}, a polynomial-time algorithm for sampling from the Potts model exactly at $\beta_c(q,d)$ was provided for $q\ge q_0(d)$ using a large-$q$ cluster expansion. 
(As the authors note, that algorithm suffers from the shortcomings
mentioned above concerning the exact identification of~$\beta_c$.)
As evidenced by the $d=2$ case, our approach has the potential for yielding essentially optimal-time sampling algorithms at $p_c(q,d)$. Furthermore, we anticipate that the approach should work at all $q$ for which there is coexistence at $p_c(q,d)$, as this
should be sufficient to ensure WSM within the phases.

\section{Preliminaries and notation}\label{sec:preliminaries}
In this section, we introduce our main notation and recall important properties of the random-cluster model and associated Markov chains. We refer the reader to~\cite{Grimmett} and~\cite{LP} for more on these topics.  

\subsection{The random-cluster model on \texorpdfstring{$\mathbb{Z}^d$}{Zd}}\label{subsec:rcmodel}
The random-cluster model on a finite graph $G=(V,E)$
with parameters $p\in[0,1]$ and $q\ge 1$ is defined in~\eqref{eqn:RCgibbs}.
In a configuration $\omega\subseteq E$, if edge~$e$ belongs to~$\omega$
we write $\omega(e)=1$ and call~$e$ {\it wired\/} or {\it open}; else we write 
$\omega(e)=0$ and call~$e$ {\it free\/} or {\it closed}.  
If $x,y$ are in the same connected component of the sub-graph $(V,\omega)$, we write $x\xleftrightarrow[]{\omega} y$. 

Throughout the paper, we will focus on the case where~$G$ is a rectangular subgraph
of the $d$-dimensional integer lattice $\mathbb Z^d$ with nearest-neighbor edges
$E(\mathbb{Z}^d) = \{\{u,v\}: u,v\in \mathbb{Z}^d, d_1(u,v) = 1\}$, where $d_1(\cdot,\cdot)$ is $\ell_1$ distance. The subsets of $\mathbb Z^d$ we will be interested in are
\begin{align*}
    \Lambda_n & = [- \tfrac n2 ,  \tfrac n2]^d \cap \mathbb Z^d\,,
\end{align*}
with induced edge set denoted $E(\Lambda_n)$.  The (inner) boundary vertices
of~$\Lambda_n$, denoted $\partial \Lambda_n$, are those vertices in $\Lambda_n$
that have neighbors in $\mathbb Z^d \setminus \Lambda_n$.

\subsubsection{Boundary conditions}\label{subsubsec:bc}

A random-cluster \emph{boundary condition} $\xi$ on $\Lambda_n$ is a partition of~$\partial\Lambda_n$ into an arbitrary number of subsets, such that the vertices in each subset are identified with one another. The random-cluster distribution with boundary condition~$\xi$, denoted $\pi_{\Lambda_n^\xi,p,q}$, is the same as in~\eqref{eqn:RCgibbs}, except that the set $\mathsf{Comp}(\omega)$ is replaced by $\mathsf{Comp}(\omega;\xi)$, the set of connected
components with this vertex identification.
Thus the boundary condition can be viewed as ``ghost wirings" of  vertices in the same subset of~$\xi$.

The \emph{free} boundary condition, denoted $\xi = \zero$, is the one whose partition of $\partial \Lambda_n$ consists only of singletons.
The \emph{wired} boundary condition, $\xi = \one$, puts all vertices of~$\partial\Lambda_n$ in the same subset. There is a natural partial order on boundary conditions, given by $\xi \le \xi'$ iff $\xi$ is a refinement of~$\xi'$. The wired/free boundary conditions are then the maximal/minimal ones under this order.

The \emph{periodic} boundary condition on~$\Lambda_n$ corresponds to the partition
that pairs up opposite points on $\partial \Lambda_m$. Under this identification of vertices, the random-cluster model on~$\Lambda_n$ is the same as that on the torus $\bbT_n=(\mathbb Z/n\mathbb Z)^d$. 
Finally, we will make use of boundary conditions induced by an arbitrary 
configuration~$\eta$ on $\mathbb Z^d \setminus \Lambda_n$, corresponding
to the partition in which $x,y\in \partial\Lambda_n$ are in the same subset 
if and only if $x\xleftrightarrow[]{\eta} y$. 

An important property of the random-cluster model that
we will use extensively 
is \emph{monotonicity in boundary conditions}~\cite[Lemma 4.14]{Grimmett}: 
for any two boundary conditions $\xi\le \xi'$, 
 we have $\pi_{\Lambda_n^{\xi'},p,q} \succcurlyeq \pi_{\Lambda_n^{\xi},p,q}$ 
 where $\succcurlyeq$ denotes stochastic domination.  

The above definition of boundary conditions and monotonicity in boundary conditions carry over naturally to general domains $A \subset \mathbb Z^d$ with associated boundary $\partial A$.

\subsection{Phase transition on \texorpdfstring{$\mathbb Z^d$}{Zd}} On $\mathbb Z^d$, the random-cluster model is well-known to undergo a percolation phase transition at some $p_c(q,d)$. Classical theory (see e.g.,~\cite[Section 5]{Grimmett}) implies that for all $p<p_c$, the probability of a giant component (a component having $\Theta(N)$ many vertices) under $\pi_{\Lambda_n}$ is $o(1)$, whereas when $p>p_c$ the probability of a giant component is $1-o(1)$. It turns out that the phase transition is sharp and for all $p<p_c$, the component sizes have exponential tails~\cite{ABF87,BD1,DCRT19}. When $p>p_c$, it is expected that all non-giant components have exponential tails; this is known in $d=2$ by planar duality, in the $q=2$ case by~\cite{Pisztora96,Bodineau05}, and when $q\ge q_0$ for some $q_0(d)$ by a technique known as Pirogov--Sinai theory~\cite{BorgsKotecky}. 

At the critical point $p_c$, the behavior is very rich and itself exhibits a transition as one varies $q$. Namely, there exists a particular $q_c(d)$ such that when $q\le q_c$, the behavior at $p_c$ should be like that of the Ising model, in that component sizes have polynomial decay on their sizes, whereas when $q>q_c$, there is phase coexistence. In this context, that means the model at $p_c$ is in a mixture of two possibilities, one mirroring the $p<p_c$ behavior, and one mirroring the $p>p_c$ behavior. This coexistence behavior is known rigorously in $d=2$, where $q_c=4$~\cite{DST,DGHMT}, and in general dimension when $q\ge q_0$~\cite{BorgsKotecky}.

\subsection{Mixing times of Markov chains}\label{subsec:mixingtimes}
Consider a (discrete-time) Markov chain with transition matrix $P$ on a finite state space $\Omega$, reversible with respect to an invariant distribution $\pi$; denote the chain initialized from $x_0\in \Omega$ by $(X_t^{x_0})_{t \in \mathbb N}$. Its (worst-case) \emph{mixing time} is given by 
\begin{align}\label{eq:def-mixing-time}
    \tmix(\epsilon) := \min \big\{t: \max_{x_0 \in \Omega} \| \mathbb P(X_t^{x_0} \in \cdot ) - \pi\|_\tv \le \epsilon\big\}\,,
\end{align}
where $\|\mu - \nu\|_\tv$ is total variation distance.  
Typically the mixing time is defined specifically as $\tmix(1/4)$, 
since for any $\delta$ one has $\tmix(\delta) \le \tmix(1/4) \log (2\delta^{-1})$.
To bound the mixing time, it suffices to bound the \emph{coupling time}; i.e., if we construct a coupling $\mathbb P$ of the steps of the chain such that for each $x_0,y_0 \in \Omega$, we have 
$\mathbb P(X_T^{x_0} \neq X_T^{y_0}) \le 1/4$, then $\tmix \le T$.  

For a probability distribution~$\nu$ over~$\Omega$, we shall
also talk about the \emph{mixing time with initialization}~$\nu$, 
which is defined as in~\eqref{eq:def-mixing-time}
but without the maximization over~$x_0$ and with~$x_0$ chosen according to~$\nu$.
 In this case the above ``boosting" does not apply; however, in this paper all
of our mixing time bounds from specific initializations suffice to achieve
total variation distance $\epsilon$ equal to any desired inverse polynomial in~$N$.

\subsection{The FK dynamics}
Recall the definition of the random-cluster (FK) dynamics from the introduction. In the presence of boundary conditions $\xi$, the only change is that in the update
rule~\eqref{eq:update-rule} the condition for $e_{t+1}$ to be a bridge is 
determined by whether its presence changes $\mathsf{Comp}(X_t^{\omega_0}; \xi)$. 

\subsubsection{Continuous time}
It will be convenient for us to prove our mixing time bounds for the
FK dynamics on~$\Lambda_n$ in {\it continuous time} rather than discrete time. 
This is defined by assigning every edge in $E(\Lambda_n)$ an i.i.d.\ rate-1 Poisson clock; if the clock at~$e$ rings at time $t_i$, we update $X_{t_i}^{\omega_0}(e)$ according to the update rule~\eqref{eq:update-rule}. 
This definition is the same as taking the continuous-time Markov chain with heat kernel $H_t=e^{t(I-P)}$, with $P$ being the transition matrix for the discrete-time FK dynamics. 
Recall (e.g.,~[Theorem 20.3]\cite{LP}) that 
\begin{align}\label{eq:continuous-time-discrete-time-comparison}
    C^{-1} |E(\Lambda_n)| \tmix^{\textrm{cont}} \le \tmix  \le  C |E(\Lambda_n)|\tmix^{\textrm{cont}}
\end{align}
for some absolute constant $C$, where $\tmix^{\textrm{cont}}$ is the mixing time of the continuous-time dynamics. (Note that the constant $C$ here depends on the minimum probability in~\eqref{eq:update-rule} of keeping the edge status unchanged, which is a function only of $p,q$.) Thus, it suffices to prove our mixing time bounds in continuous
time, reduced by a factor of $E(\Lambda_n)$. (The comparison also holds for the mixing time from a specified initialization.) Abusing notation slightly, from this point onwards we let $(X_t^{\omega_0})_{t\ge 0}$ and all other random-cluster dynamics chains we consider be the continuous-time versions as defined above.

\subsubsection{Monotonicity and the grand coupling}\label{subsec:prelim-grand-coupling}
The FK dynamics can be seen to be monotone in the following sense: if $\omega_0 \ge \omega_0'$, then the law of $X_t^{\omega_0}$ stochastically dominates the law of $X_{t}^{\omega_0'}$.
Moreover, there is a standard choice of \emph{grand monotone coupling} (using
the same instantiation of Poisson clock rings, and the same source of 
randomness for the edge updates)
which simultaneously couples all FK dynamics chains $(X_{t,A^\xi}^{\omega_0})$ indexed by their initial configuration $\omega_0$, domain $A\subset \mathbb Z^d$, and boundary condition $\xi$. 
It is not difficult to verify that the coupling is monotone in the sense that if $\omega_0 \ge \omega_0'$ and $\xi \ge \xi'$, then $X_{t,A^\xi}^{\omega_0}\ge X_{t,A^{\xi'}}^{\omega_0'}$ for all $t \ge 0$: see e.g.,~\cite[Section 8.3]{Grimmett}. 

\subsubsection{The restricted FK dynamics}\label{subsec:restricted-fk-dynamics}
A crucial tool in our analysis will be the FK dynamics 
\emph{restricted to} an increasing event $\widehat \Omega$ (symmetrically, to a decreasing event $\widecheck \Omega$).
This chain, denoted $(\widehat X_t^{\omega_0})_{t\ge 0}$ (symmetrically, $(\widecheck X_t^{\omega_0})_{t\ge 0}$) is defined exactly
as before, except that 
if the update in~\eqref{eq:update-rule} would cause~$\omega$ not to be 
in~$\widehat \Omega$ (symmetrically, in~$\widecheck \Omega$) we leave $\omega(e)$ unchanged.
It is easy to check that the Markov chains $(\widehat X_t)$ and $(\widecheck X_t)$ are
reversible w.r.t.\ $\widehat \pi  = \pi(\cdot \mid \widehat \Omega)$ and $\widecheck \pi = \pi(\cdot \mid \widecheck \Omega)$, respectively. 

\subsection{Notational disclaimers}
The letter $C>0$ will be used frequently, indicating the existence of a constant that is independent of $r,n,m$ etc., but that may depend on
$p,d,q$ and may differ from line to line. We use~$O$, $o$ and~$\Omega$ notation in the same manner, where the hidden constants may depend on $p,d,q$.

\section{Spatial and temporal mixing for off-critical random-cluster models}
In this section we establish the implications of WSM and of SSM with a specific boundary condition for the mixing time of the FK dynamics on, respectively, the torus and a box with that specific boundary condition. Namely, the purpose of this section is to establish Theorems~\ref{thm:intro1} and~\ref{thm:intro2}. 

We begin by giving the formal definitions of WSM, and of SSM with boundary condition $\xi$ for the random-cluster model.  We formulate these for regions that are boxes
in~$\mathbb{Z}^d$.

\begin{definition}\label{def:WSM}
    We say the random-cluster model satisfies {\em weak spatial mixing (WSM)\/} if, for every $r$,
    \begin{align}\label{eq:WSM}
        \|\pi_{\Lambda_r^\one} (\omega(\Lambda_{r/2}) \in \cdot) - \pi_{\Lambda_r^\zero}(\omega(\Lambda_{r/2})\in \cdot) \|_\tv \le Ce^{ - r/C}\,,
    \end{align}
for some constant $C>0$ (which may depend on $d$, $q$ and~$p$).
\end{definition}

To phrase the property of SSM with a specific boundary condition precisely, given a box $\Lambda_n$ with boundary conditions $\xi$, define $B_{m,e}$ to be the graph with vertex set $\{v\in \Lambda_n: d_\infty(v,e)\le m\}$ and induced edge set $E(B_{m,e})$. When understood from context, $B_{m,e}$ will stand in for its edge set for readability. If $\eta$ is a
configuration on $E(\Lambda_n)\setminus E(B_{m,e})$, then by $B_{m,e}^\eta$ we mean the graph $B_{m,e}$ with boundary conditions induced by~$\eta$ together with~$\xi$. 

\begin{definition}\label{def:ssm-per-bc}
    We say the random cluster model on $\Lambda_n^\xi$ satisfies {\em SSM with constant $C$\/} if, for all $e\in E(\Lambda_n)$ and all $m\le n/2$, we have 
    \begin{align}\label{eq:SSM}
        \|\pi_{B_{m,e}^\one}(\omega(B_{m/2,e})\in \cdot) - \pi_{B_{m,e}^\zero}(\omega(B_{m/2,e})\in \cdot)\|_\tv \le C e^{ - m/C}\,.
    \end{align}
\end{definition}

\begin{remark}
   If Definitions~\ref{def:WSM}--\ref{def:ssm-per-bc} hold for some constant $C$, then there exists a constant $C'(C,p,q)>0$ such that they hold with the right-hand sides of~\eqref{eq:WSM} and~\eqref{eq:SSM} replaced by $e^{ - r/C'}$ and $e^{ - m/C'}$, respectively. This can be readily checked using the fact that for any fixed $r,m$, the TV-distances are bounded strictly away from $1$ by forcing all edges to take the same state. In the sequel, we will sometimes use this modified form for convenience. 
\end{remark}

For convenience, we now restate Theorems~\ref{thm:intro1} and~\ref{thm:intro2} in the
language of continuous time. (The restatement of Theorem~\ref{thm:intro2} is also more precise about quantification and dependencies on the SSM constant, because a specific boundary condition may only make sense for a single box-size $n$.)
\par\medskip\noindent
{\bf Theorem~\ref{thm:intro1} [formal].}
{\it Fix $(p,q,d)$. If the random-cluster model satisfies WSM, then the mixing time of the continuous-time FK dynamics on $\bbT_n = (\mathbb Z/n\mathbb Z)^d$ is $O(\log N)$.}
\par\medskip

\par\smallskip\noindent
{\bf Theorem~\ref{thm:intro2} [formal].}    
{\it Fix any $C_\star>0$ and $(p,q,d)$. There exists $n_0$ such that for all $n\ge n_0$, if the random cluster model on $\Lambda_n^\xi$ has SSM with constant $C_\star$, then the mixing time of the continuous-time FK dynamics on $\Lambda_n^\xi$ is~$O(\log N)$.}
\par\medskip

As mentioned in the introduction, it is fairly straightforward to deduce
Theorem~\ref{thm:intro1} using the 
classical arguments of Martinelli and Olivieri~\cite{MaOl1},
which, as noted by Harel and Spinka~\cite{HarelSpinka}, extend naturally to the random-cluster model.  Our more novel
contribution is therefore Theorem~\ref{thm:intro2}, which
can be seen as an extension of the implication of Theorem~\ref{thm:intro1} to the more delicate 
setting of a fixed boundary condition.  Accordingly, we shall devote the majority of this
section to proving Theorem~\ref{thm:intro2}, and will
then for completeness briefly indicate 
in Section~\ref{subsec:wsm-torus} 
how to extract Theorem~\ref{thm:intro1} from our proof.

Our approach to proving Theorem~\ref{thm:intro2} is a finite-volume analogue of the space-time recursion of~\cite{MaOl1}. Whereas that argument 
shows that WSM implies exponential relaxation in the infinite volume~$\mathbb{Z}^d$, 
to the best of our knowledge it has not until now been applied in the presence of
specific boundary conditions. Previous proofs for fast mixing in the presence of boundary conditions have gone through recursive schemes that require notions of SSM that apply uniformly over \emph{all}, or at least all side-homogeneous, boundary conditions. Recall that there are parameter regimes in $d\ge 3$ where such uniform SSM is not expected to hold, but the random-cluster model on $\Lambda_n^\zero$ or $\Lambda_n^\one$ does satisfy SSM in the form of Definition~\ref{def:ssm-per-bc}.

\begin{remark}\label{rem:ising}
Since the proof of Theorem~\ref{thm:intro2} primarily used monotonicity, it would equally work for the Glauber dynamics for the Ising model on $\Lambda_n$ in the presence of boundary conditions $\eta\in \{\pm 1\}^{\mathbb Z^d\setminus\Lambda_n}$, assuming only SSM for $\Lambda_n^\eta$ and not uniformly over all boundary conditions. It was recently shown in~\cite{DSS-IsingSSM} that the Ising model has SSM uniformly over all boundary conditions at all $\beta<\beta_c(d)$ in the absence of an external field. 
However, there are parameter regimes (e.g., low-temperatures with positive external field in $d\ge 3$) where the Ising model is known to have WSM~\cite{Ott-weak-mixing} but \emph{not} SSM uniformly over all boundary conditions (an example bad boundary condition is given in~\cite{MOS}). 
All the same, even in this parameter regime there are classes of boundary conditions---most obviously the all-$+1$ condition---that do satisfy SSM.
Our results can therefore be used to deduce fast mixing for the Ising Glauber dynamics in such settings, may not have followed from known technology.
\end{remark}

\subsection{A recurrence for the disagreement probability at a single edge}

Let us begin with some notation. Throughout this section, $n$ and $\xi$ will be fixed such that $\Lambda_n^\xi$ satisfies SSM with constant $C_0$, and $n\ge n_0(C_0)$ to be determined later. 
For $m\le n/2$, for $r\le m$, define the event 
\begin{align*}
    \cE_t(m,r,e) := \big\{X_{t,B_{m,e}^\one}^\one(B_{r,e}) \ne X_{t,B_{m,e}^\zero}^\zero(B_{r,e})\big\}\,,
\end{align*}
and under the grand coupling of $X_{t,B_{m,e}^\one}^\one,X_{t,B_{m,e}^\zero}^\zero$, let 
\begin{align*}
    \phi_{m,t}(e):= \mathbb P(\cE_t({m,0,e})) = \mathbb P\big(X_{t,B_{m,e}^\one}^\one(e) \ne X_{t,B_{m,e}^\zero}^\zero(e)\big)\,.
\end{align*}
By monotonicity of the grand coupling, we next observe that 
\begin{align}\label{eq:phi-monotonicity-ssm}
    \phi_{m,s}(e)\ge \phi_{l,t}(e) \qquad \mbox{for all $m\le l$ and $s\le t$}
\end{align}
Indeed, $B_{l,e}^\one$ stochastically dominates $B_{m,e}^\one$ if $m\le l$ regardless of $\xi$, and vice versa with $\zero$ boundary. For the monotonicity in time, notice that by the censoring lemma of~\cite[Theorem 1.1]{PWcensoring}, $X_{t,B_{m,e}^\one}^\one$ is stochastically decreasing in time, and $X_{t,B_{m,e}^\zero}^{\zero}$ is stochastically increasing in time, so 
\begin{align*}
    \phi_{m,s}(e) & \! = \! \mathbb P (X_{s,B_{m,e}^\one}^\one(e) =1) - \mathbb P (X_{s,B_{m,e}^\zero}^\zero(e) =1) \! \ge \mathbb P (X_{t,B_{m,e}^\one}^\one(e) =1) - \mathbb P (X_{t,B_{m,e}^\zero}^\zero(e) =1) \!= \!\phi_{m,t}(e)\,.
\end{align*}
Define now 
\begin{equation}\label{eq:phi-m-t-ssm}
    \phi_{m,t} :=\max_{e\in E(\Lambda_n)}\phi_{m,t}(e)\,.
\end{equation}
Observe that~\eqref{eq:phi-monotonicity-ssm} implies the same monotonicity relation for $\phi_{m,t}$. 
The main result of this subsection, and indeed the main input into establishing Theorem~\ref{thm:intro2} is the following. 

\begin{proposition}\label{prop:phi-recurrence-ssm}
Suppose that the random-cluster model on $\Lambda_n^\xi$ satisfies SSM with constant $C_\star$. Then for every $t\ge 0$, every $m\le n/4$, and every $r\le m$,
\begin{align}\label{eqn:recurrence-ssm}
    \phi_{2m,2t} \le  e^{ - r/C_\star}+  d(2r)^d \phi_{m,t}^2\,.
\end{align}
\end{proposition}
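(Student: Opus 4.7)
The plan is to establish~\eqref{eqn:recurrence-ssm} via a finite-volume adaptation of the Martinelli--Olivieri space-time recursion~\cite{MaOl1}, splitting the time interval $[0,2t]$ at the midpoint~$t$ and the spatial box $B_{2m,e}$ at scale~$r$. Consider the grand monotone coupling of $X^\one := X_{\cdot, B_{2m,e}^\one}^\one$ and $X^\zero := X_{\cdot, B_{2m,e}^\zero}^\zero$, under which $X^\one_s \ge X^\zero_s$ pointwise for all $s \ge 0$. Introduce the event $\cG$ that $X^\one_t$ and $X^\zero_t$ coincide on every edge of $B_{r,e}$, and decompose
\begin{align*}
\P\bigl(\cE_{2t}(2m,0,e)\bigr) \;\le\; \P\bigl(\cE_{2t}(2m,0,e)\cap\cG\bigr) + \P\bigl(\cE_{2t}(2m,0,e)\cap\cG^c\bigr).
\end{align*}

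For the first (``SSM'') term, the chains agree on $B_{r,e}$ at time~$t$, so any disagreement at $e$ by time $2t$ must be driven by the disagreeing annulus $B_{2m,e}\setminus B_{r,e}$. I would invoke the Markov property at time $t$, use monotonicity in boundary conditions to dominate the post-$t$ dynamics on $B_{r,e}$ by the FK chain with the extremal $B_{r,e}^\one$ and $B_{r,e}^\zero$ boundaries, and apply the censoring inequality of~\cite{PWcensoring} to pass from finite time to equilibrium. Combined with the SSM hypothesis (Definition~\ref{def:ssm-per-bc}), this yields
\begin{align*}
\P\bigl(\cE_{2t}(2m,0,e)\cap\cG\bigr) \;\le\; \bigl\|\pi_{B_{r,e}^\one}(\omega(e)\in\cdot) - \pi_{B_{r,e}^\zero}(\omega(e)\in\cdot)\bigr\|_\tv \;\le\; e^{-r/C_\star}.
\end{align*}

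For the second (``propagation'') term, I would introduce an auxiliary restart coupling: on the same probability space, define $Z^\one$ and $Z^\zero$ to be FK chains in $B_{2m,e}^\one$ and $B_{2m,e}^\zero$, respectively, started at the extremal configurations $\one$ and $\zero$ at time $t$, driven by the same Poisson clocks and update variables as $X^{\one/\zero}$ on $[t,2t]$. Pointwise monotonicity of the grand coupling gives $Z^\one_{2t}(e) \ge X^\one_{2t}(e)$ and $Z^\zero_{2t}(e) \le X^\zero_{2t}(e)$, so
\begin{align*}
\{X^\one_{2t}(e) \ne X^\zero_{2t}(e)\} \;\subseteq\; \{Z^\one_{2t}(e) \ne Z^\zero_{2t}(e)\}.
\end{align*}
Crucially, the event on the right depends only on the Poisson randomness used during $[t,2t]$ and is therefore independent of $\cF_t$. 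Union-bounding over the edge $e'\in E(B_{r,e})$ witnessing disagreement in $\cG^c$ and using this independence,
\begin{align*}
\P\bigl(\cE_{2t}(2m,0,e)\cap\cG^c\bigr) \;\le\; \sum_{e'\in E(B_{r,e})} \P\bigl(Z^\one_{2t}(e)\ne Z^\zero_{2t}(e)\bigr) \cdot \P\bigl(X^\one_t(e')\ne X^\zero_t(e')\bigr).
\end{align*}
Each factor equals $\phi_{2m,t}$ evaluated at the relevant edge; by the monotonicity~\eqref{eq:phi-monotonicity-ssm} (applied to the second factor via the inclusion $B_{m,e'}\subseteq B_{2m,e}$ valid for $e'\in B_{r,e}$ when $r\le m$), each is at most $\phi_{m,t}$. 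Since $|E(B_{r,e})|\le d(2r)^d$, the sum is bounded by $d(2r)^d \phi_{m,t}^2$, completing the recurrence.

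The main obstacle is in the SSM term: although conceptually the dynamical disagreement on $\cG$ should be controlled by an equilibrium SSM estimate, passing from dynamics to equilibrium via censoring must respect the non-local FK update rule~\eqref{eq:update-rule}, whose bridge status depends on the global connectivity of $\omega$. One must verify that freezing updates outside $B_{r,e}$ during $[t,2t]$ preserves the monotone ordering in the grand coupling and correctly identifies the stationary distribution of the censored chain with the extremal boundary conditions appearing in Definition~\ref{def:ssm-per-bc}.
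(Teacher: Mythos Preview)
Your decomposition and your treatment of the propagation term $\P(\cE_{2t}(2m,0,e)\cap\cG^c)$ match the paper's proof essentially verbatim: the restart coupling you describe is exactly the paper's ``Markov property plus worst case over $\cF_t$'' step, and the passage from $\phi_{2m,t}$ to $\phi_{m,t}$ via $B_{m,e'}\subset B_{2m,e}$ is identical.

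The gap is in the SSM term, and it is precisely the obstacle you flag at the end. Your plan is to dominate the post-$t$ dynamics on $B_{r,e}$ by FK chains on $B_{r,e}^\one$ and $B_{r,e}^\zero$ and then ``use censoring to pass from finite time to equilibrium''. This does not go through: if you restart those chains at time $t$ from the common value $\eta = X^\one_t(B_{r,e}) = X^\zero_t(B_{r,e})$, you have no control on how $\eta$ sits relative to $\pi_{B_{r,e}^\one}$ or $\pi_{B_{r,e}^\zero}$, so censoring says nothing. If instead you restart from $\one$ and $\zero$ at time $t$, the bound you obtain is $\phi_{r,t}(e)$, and since $\phi_{r,t}$ is \emph{decreasing} in $t$ (by the very censoring argument in~\eqref{eq:phi-monotonicity-ssm}) you cannot replace it by the smaller equilibrium quantity $\pi_{B_{r,e}^\one}(\omega_e=1)-\pi_{B_{r,e}^\zero}(\omega_e=1)$.

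The paper's resolution avoids censoring here entirely. It compares not to restarted extremal chains but to the \emph{stationary} chains $X_{\cdot,B_{r,e}^\one}^{\pi_{B_{r,e}^\one}}$ and $X_{\cdot,B_{r,e}^\zero}^{\pi_{B_{r,e}^\zero}}$, run from time~$0$. The sandwiching at time~$t$ on $\cG$ comes from the observation that, under the grand coupling, the stationary chain $X_{t,B_{2m,e}^\one}^{\pi_{B_{2m,e}^\one}}$ is pointwise squeezed between $X_{t,B_{2m,e}^\one}^\one$ and $X_{t,B_{2m,e}^\zero}^\zero$; on $\cG$ these two agree on $B_{r,e}$, forcing the stationary chain to agree with them there as well. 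Domain monotonicity then gives $X_{t,B_{r,e}^\one}^{\pi_{B_{r,e}^\one}}(B_{r,e}) \ge X^\one_t(B_{r,e})$ on $\cG$, and this ordering persists to time $2t$. Since the bounding chains are stationary, dropping the indicator $\mathbf{1}_\cG$ (licit because the difference is nonnegative under the grand coupling) yields directly the equilibrium gap $\pi_{B_{r,e}^\one}(\omega_e=1)-\pi_{B_{r,e}^\zero}(\omega_e=1)\le e^{-r/C_\star}$, with no passage through a finite-time quantity.
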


\begin{proof}
Fix $t$ and any edge $e\in E(\Lambda_n)$, and consider the quantity $\phi_{2m,2t}(e)$. We can of course express this as
\begin{align}\label{eq:P-2t-f-splitting-ssm}
    \phi_{2m,2t} = \mathbb P(\cE_{2t}(2m,0,e), \cE_t(2m,r,e)^c) + \mathbb P(\cE_{2t}(2m,0,e), \cE_t(2m,r,e)).
\end{align}
Let us consider these two terms separately. For the first term we notice that, under the grand coupling, on the event $\cE_t(2m,r,e)^c$ the chains with all possible initializations agree on $B_{r,e}$, so we have $$X_{t,B_{2m,e}^\one}^\one (B_{r,e})= X_{t,B_{2m,e}^\one}^{\pi_{B_{2m,e}^\one}}(B_{r,e})\,.$$
By monotonicity in both initialization and boundary conditions, this implies 
$$X_{t,B_{2m,e}^\one}^\one (B_{r,e})\le X_{t,B_{r,e}^\one}^{\pi_{B_{r,e}^\one}}(B_{r,e})\,.$$
The analogous (reverse) inequality holds for $X_{t,B_{2m,e}^\zero}^\zero(B_{r,e})$. In particular, on $\cE_t(2m,r,e)^c$, we have 
$$
X_{t,B_{r,e}^\one}^{\pi_{B_{r,e}^\one}}(B_{r,e}) \ge X_{t,B_{2m,e}^\one}^\one(B_{r,e})\ge X_{t,B_{2m,e}^\zero}^\zero(B_{r,e}) \ge X_{t,B_{r,e}^\zero}^{\pi_{B_{r,e}^\zero}}(B_{r,e})\,.
$$
By monotonicity of the FK dynamics and the Markov property, on $\cE_t(2m,r,e)^c$, this ordering is maintained for all times beyond $t$, so that in particular
$$
X_{2t,B_{r,e}^\one}^{\pi_{B_{r,e}^\one}}(e) \ge X_{2t,B_{2m,e}^\one}^\one(e)\ge X_{2t,B_{2m,e}^\zero}^\zero(e) \ge X_{2t,B_{r,e}^\zero}^{\pi_{B_{r,e}^\zero}}(e)\,.
$$
Therefore, 
\begin{align*}
    \mathbb P(\cE_{2t}(2m,0,e), \cE_t(2m,r,e)^c) & = \mathbb E[(X_{2t,B_{2m,e}^\one}^\one(e)- X_{2t,B_{2m,e}^\zero}^\zero(e)) \mathbf 1\{\cE_t(2m,r,e)^c\}]\nonumber \\ 
    & \le \mathbb E[(X_{2t,B_{r,e}^\one}^{\pi_{B_{r,e}^\one}}(e)- X_{2t,B_{r,e}^\zero}^{\pi_{B_{r,e}^\zero}}(e)) \mathbf 1\{\cE_t(2m,r,e)^c\}]\,. \nonumber
\end{align*}
Under the grand coupling, $X_{2t,B_{r,e}^\one}^{\pi_{B_{r,e}^\one}}(e)- X_{2t,B_{r,e}^\zero}^{\pi_{B_{r,e}^\zero}}(e)\ge 0$, so we can now drop the indicator to get 
\begin{align}
\mathbb P(\cE_{2t}(2m,0,e), \cE_t(2m,r,e)^c) & \le \mathbb E[X_{2t,B_{r,e}^\one}^{\pi_{B_{r,e}^\one}}(e)] - \mathbb E[ X_{2t,B_{r,e}^\zero}^{\pi_{B_{r,e}^\zero}}(e)] \nonumber\\
& = \pi_{B_{r,e}^\one}(\omega_{e} = 1) - \pi_{B_{r,e}^\zero}(\omega_{e} = 1)\le \exp(-r/C_\star) \label{eq:2.2temp1-ssm}\,,
\end{align}
where the last inequality follows from our assumption that the random-cluster model on $\Lambda_n^\xi$ satisfies SSM with constant $C_\star$ as $r\le n/2$ (recall Definition~\ref{def:ssm-per-bc}). 

We now turn to the second term in~\eqref{eq:P-2t-f-splitting-ssm}. Clearly we can write
\begin{align}
    \mathbb P(\cE_{2t}(2m,0,e), \cE_t(2m,r,e)) = \mathbb P(\cE_{2t}(2m,0,e) \mid \cE_t(2m,r,e)) \mathbb P(\cE_t(2m,r,e)). \label{eq:2.2temp2-ssm}
\end{align}
A union bound gives
\begin{align*}
    \mathbb P(\cE_t(2m,r,e)) \le |E(B_{r,e})| \max_{f\in E(B_{r,e})} \mathbb P\Big(X_{t,B_{2m,e}^\one}^\one(f) \ne X_{t,B_{2m,e}^\zero}^\zero(f)\Big)\,.
\end{align*}
Since $r\le m$, for every $f\in B_{r,e}$, $B_{m,f}\subset B_{2m,e}$, so that by monotonicity
\begin{align*}
    \mathbb P(\cE_t(2m,r,e)) \le |E(B_{r,e})| \max_{f\in E(B_{r,e})} \mathbb P\Big(X_{t,B_{m,f}^\one}^\one(f) \ne X_{t,B_{m,f}^\zero}^\zero(f)\Big) & = |E(B_{r,e})|\max_{f\in E(B_{r,e})} \phi_{m,t}(f) \\
    & \le d(2r)^d \phi_{m,t} \,.
\end{align*}
At the same time, by the Markov property and monotonicity of the grand coupling, taking a worst case over the realizations of the coupling in the time interval $[0,t)$, we get 
\begin{align*}
    \mathbb P (\cE_{2t}(2m,0,e) \mid \cE_t(2m,r,e)) \le \max_{A_t\in \cF_t} \mathbb P (\cE_{2t}(2m,0,e)  \mid A_t) \le \mathbb P(\cE_{t}(2m,0,e)) = \phi_{2m,t}(e)\,,
\end{align*}
where $\cF_t$ is the filtration defined by the grand coupling up to time $t$. 
The right-hand side is in turn at most $\phi_{m,t}(e)\le \phi_{m,t}$ by~\eqref{eq:phi-monotonicity-ssm}. 
Plugging these bounds into~\eqref{eq:2.2temp2-ssm} we get 
\begin{align*}
    \mathbb P (\cE_{2t}(2m,0,e), \cE_{t}(2m,r,e)) \le d(2r)^d \phi_{m,t}^2\,.
\end{align*}
Together with the bound~\eqref{eq:2.2temp1-ssm} on the first term in~\eqref{eq:P-2t-f-splitting-ssm}, we obtain the desired bound~\eqref{eqn:recurrence-ssm}. 
\end{proof}

\subsection{Exponential decay of disagreement probability at a single edge}
We next claim that Proposition~\ref{prop:phi-recurrence-ssm} implies exponential decay of $\phi_{m,t}$ in space and time. 

\begin{corollary}\label{cor:phi-exp-decay-ssm}
For every $C_\star$, there exists $C(C_\star,p,q,d)>0$ such that if the random-cluster model on $\Lambda_n^\xi$ satisfies SSM with constant $C_\star$, then we have
\begin{align*}
     \phi_{t,t} \le C e^{ - t/C} \qquad \mbox{for all $t\le n/2$}\,.
\end{align*}
\end{corollary}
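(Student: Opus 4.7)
The plan is to iterate the recurrence~\eqref{eqn:recurrence-ssm} at doubling scales, setting $m_k = 2^k m_0$ and $t_k = 2^k t_0$, choosing $r = m_k$ at each step, and propagating by induction the hypothesis
\[
\phi_{m_k, t_k} \le \frac{C_1}{m_k^d}\, e^{-\alpha m_k}
\]
for any fixed $\alpha \in (0, 1/(2 C_\star))$ and a sufficiently small $C_1 = C_1(d, C_\star)$. Plugging the hypothesis into~\eqref{eqn:recurrence-ssm} yields
\[
\phi_{m_{k+1}, t_{k+1}} \le e^{-m_k/C_\star} + d \cdot 2^d \cdot \frac{C_1^2}{m_k^d}\, e^{-2\alpha m_k},
\]
which I will compare against the target $C_1 (2 m_k)^{-d}\, e^{-2\alpha m_k}$. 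The second term is dominated as soon as $C_1 \le 1/(2 d \cdot 4^d)$, while the first is dominated once $m_k$ exceeds a threshold depending on $\alpha, C_\star, d$ (the polynomial $m_k^d$ loses to the exponential gap $(1/C_\star - 2\alpha) m_k$). This closes the induction as long as $m_k \le n/4$, the regime in which the recurrence applies.

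For the base case, exactly as in the proof of Proposition~\ref{prop:phi-recurrence-ssm} the grand coupling satisfies
\[
\lim_{t\to\infty} \phi_{m_0, t}(e) = \pi_{B_{m_0, e}^\one}(\omega_e = 1) - \pi_{B_{m_0, e}^\zero}(\omega_e = 1) \le e^{-m_0/C_\star}
\]
by SSM. Since FK dynamics on the finite box $B_{m_0, e}$ is an irreducible Markov chain, it converges exponentially (at some possibly bad, but finite, rate depending on $m_0$) to its stationary distribution. Hence for each fixed $m_0$ one can pick a finite $t_0 = t_0(m_0)$ with $\phi_{m_0, t_0} \le 2 e^{-m_0/C_\star}$; enlarging $m_0$ once more (using $\alpha < 1/C_\star$) makes this at most $C_1 m_0^{-d} e^{-\alpha m_0}$. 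Both $m_0$ and $t_0$ are then absorbed into constants depending only on $p, q, d, C_\star$.

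Given the inductive hypothesis, the corollary follows by monotonicity~\eqref{eq:phi-monotonicity-ssm}: for $t \in [2 t_0, n/2]$, setting $k = \lfloor \log_2(t/t_0) \rfloor$ gives $m_k \le t_k \le t$ and $m_{k-1} \le n/4$ throughout the iteration, and
\[
\phi_{t,t} \le \phi_{m_k, t_k} \le C_1 \exp\!\Big(-\frac{\alpha m_0}{2 t_0}\, t\Big),
\]
which is of the required form. The sliver $t < 2 t_0$ is handled by enlarging $C$ so that $C e^{-t/C} \ge 1$ over that range.

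The main obstacle, I expect, is the polynomial blow-up $d(2r)^d$ in~\eqref{eqn:recurrence-ssm}, which comes from the union bound over $|E(B_{r, e})|$. A pure exponential ansatz $\phi_{m_k, t_k} \le e^{-\alpha m_k}$ cannot propagate, because squaring it leaves an uncancelled $(2 m_k)^d$ factor at each step, steadily eroding $\alpha$ over the iteration. The $m_k^{-d}$ prefactor in the inductive hypothesis is tuned precisely to cancel this factor when squared, letting the induction close with $\alpha$ held fixed at the cost of only a polynomial tax on the disagreement bound.
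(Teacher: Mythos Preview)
Your proof is correct. The approach is close in spirit to the paper's but takes a more direct route. The paper packages the recurrence into an abstract lemma about sequences $a_k=\phi_{k,k}$ and solves it in two stages: first choosing $r=-C_0\log a_k$ adaptively to obtain stretched-exponential decay $a_k\le 2^{-(k/k_0)^{0.99}}$, and then choosing $r=k$ and passing to the substitution $\psi_k=\sqrt d\,(4k)^{d/2}\sqrt{a_k+e^{-k/2C_\star}}$ to upgrade to genuine exponential decay. Your $m_k^{-d}$ prefactor in the inductive hypothesis plays exactly the same role as the paper's $\psi_k$ substitution in the second stage, cancelling the $d(2r)^d$ union-bound factor when the hypothesis is squared. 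The difference is that you bypass the first stage entirely: by decoupling the space and time scales (allowing $t_0\gg m_0$) and running the chain at the fixed scale $m_0$ long enough to approach stationarity, you extract from SSM a base case that is already exponentially small in $m_0$, rather than merely below a small constant $\epsilon_0$. The paper's abstract lemma only assumes the latter and so must bootstrap through the stretched-exponential stage; your argument exploits the concrete structure to start the exponential induction immediately. One small point worth making explicit: the finite convergence rate at scale $m_0$ must be uniform over $e\in E(\Lambda_n)$ (since $B_{m_0,e}$ inherits a piece of $\xi$ when $e$ is near $\partial\Lambda_n$), which follows because there are only finitely many multigraphs on at most $(2m_0)^d$ vertices---the paper spells this out, and your phrase ``rate depending on $m_0$'' implicitly relies on it.
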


\begin{proof}
The corollary will follow by setting $a_k = \phi_{k,k}$ and applying the following lemma. 

\begin{lemma}\label{lem:recurrence-solution}
Fix $d$ and $C_\star$. Suppose  $0\le a_k\le 1$ is a non-increasing (in $k$) sequence. There exists $\epsilon_0(C_\star,d)$ such that if $a_{k_0}\le \epsilon_0$ for some $k_0$, and $a_k$ satisfies 
\begin{align*}
    a_{2k}\le d(2r)^d a_k^2 + e^{ - r/C_\star}\,, \qquad \mbox{for all $r\le k\le n/2$}\,,
\end{align*}
then there exists $C = C(k_0,C_\star,d)$ such that $a_k\le Ce^{ - k/C}$ for all $k\le n/2$. 
\end{lemma}
The proof of the lemma is standard, and we therefore defer it to Appendix~\ref{sec:recurrence-solution}. Let us now reason that we can apply the lemma to the sequence $a_k = \phi_{k,k}$. Clearly $0\le a_k\le 1$ as it is a probability, and the fact that the sequence is non-increasing comes from~\eqref{eq:phi-monotonicity-ssm}. 
Let $R = R(\epsilon_0)= R(C_\star,d)$ be sufficiently large that $e^{-R/C_\star}$ is less than $\epsilon_0/3$ and make sure $n_0$ is at least $2R$. Then, let $t_0(R)$ be given by 
$$\max_{\substack{G: |V(G)|\le (2R)^d \\ |E(G)|\le d(2R)^d}} \tmix(X_{t,G})\log(3/\epsilon_0)\,,$$
where the maximum runs over all possible multi-graphs on at most $(2R)^d$ many vertices with at most $d(2R)^d$ many edges. (This captures all the possible boundary conditions induced on $B_{r,e}$.) Evidently this is a finite set, and thus the maximal mixing time above is some constant $T(p,q,R,d) = T(C_\star,p,q,d)$. Now by sub-multiplicativity in time of the total-variation distance between a worst-pair of initializations, which is at most twice the total-variation distance and which in turn bounds $\max_{\omega_0}d_\tv(\omega_0,t)$ in any Markov chain (see e.g.,~\cite{LP}), this implies that
\begin{align*}
    a_{R\vee t_0(R)}\le\phi_{R,t_0(R)}& \le \max_{e\in E(\Lambda_n)} \Big( \max_{\omega_0} \|\mathbb P(X_{t_0,B_{r,e}^\one}^{\omega_0}\in \cdot) - \pi_{B_{r,e}^\one}\|_\tv +  \max_{\omega_0} \|\mathbb P(X_{t_0,B_{r,e}^\zero}^{\omega_0}\in \cdot)  - \pi_{B_{r,e}^\zero}\|_\tv \\
    & \qquad \qquad \qquad  + \|\pi_{B_{r,e}^\one}(\omega_{e}\in \cdot) - \pi_{B_{r,e}^\zero}(\omega_{e}\in \cdot)\|_\tv\Big) \,,
\end{align*}
is at most $\epsilon_0$. Therefore, taking $k_0 = \max\{R,t_0(R)\}$, which clearly only depends on $C_\star,p,q,d$, we have $a_{k_0} \le \epsilon_0$. Further, the recurrence relation is satisfied for all $r\le k \le n/2$ by Proposition~\ref{prop:phi-recurrence-ssm} whenever the random-cluster model on $\Lambda_n^\xi$ satisfies SSM with constant $C_\star$. Therefore, applying Lemma~\ref{lem:recurrence-solution}, we see that there exists $C(C_\star,p,q,d)$ such that $a_k \le Ce^{ - k/C}$ for all $k\le n/2$. Finally the fact that Corollary~\ref{cor:phi-exp-decay-ssm} is for real values of $t$, not just integer, is taken care of by the monotonicity in time~\eqref{eq:phi-monotonicity-ssm} and a change in the constant $C$ in the bound. 
\end{proof}

\subsection{Fast mixing with boundary conditions under SSM}
We now use Corollary~\ref{cor:phi-exp-decay-ssm} to conclude the proof of Theorem~\ref{thm:intro2}. 

\begin{proof}[\textbf{\emph{Proof of Theorem~\ref{thm:intro2}}}]
Consider the grand coupling of the FK dynamics $(X_t^{\omega_0}) = (X_{t,\Lambda_n^\xi}^{\omega_0})$ on $\Lambda_n^\xi$. Let $C_0(C_\star,p,q,d)$ be the constant obtained from Corollary~\ref{cor:phi-exp-decay-ssm}, and let $n_0(C_\star,p,q,d)$ be such that $2d C_0 \log n \le n/2$ and $C_0 n^{-2d} |E(\Lambda_n)| \le 1/4$ for all $n\ge n_0$. 

By the coupling definition of total variation distance and a union bound, it suffices to show that the following is at most $1/4$ when $t = 2d C_0 \log n$: 
\begin{align*}
    \mathbb P\big(X_{t}^\one \ne X_t^\zero\big) \le \sum_{e\in E(\Lambda_n)} \mathbb P\big(X_t^\one(e)\ne X_t^\zero(e)\big)\,.
\end{align*}
By monotonicity, this is at most 
\begin{align*}
    \sum_{e\in E(\Lambda_n)} \mathbb P\big(X_{t,B_{t,e}^\one}^\one(e) \ne X_{t,B_{t,e}^{\zero}}^\zero(e)\big) = \sum_{e\in E(\Lambda_n)} \phi_{t,t}(e) \le |E(\Lambda_n)| \phi_{t,t}\,.
\end{align*}
Applying our bound on $\phi_{t,t}$ from Corollary~\ref{cor:phi-exp-decay-ssm}, and the choice of $t, n_0$, we see that as long as $n\ge n_0$, under the grand coupling
\begin{align*}
    \mathbb P(X_t^\one \ne X_t^\zero) \le C_0 |E(\Lambda_n)|  e^{ - t/C_0} = C_0 n^{-2d} |E(\Lambda_n)|\le 1/4\,,
\end{align*}
concluding the proof. 
\end{proof}

\subsection{Fast mixing on the torus under WSM}\label{subsec:wsm-torus}
We conclude with brief comments describing the differences (actually, simplifications) of the above argument that recover Theorem~\ref{thm:intro1}. 

\begin{proof}[\textbf{\emph{Proof of Theorem~\ref{thm:intro1}}}]
    The proof can be derived essentially by a simpler version of the proof of Theorem~\ref{thm:intro2}, so we just explain the necessary modifications. Rather than defining $B_{m,e}^\one$ and $B_{m,e}^\zero$ as boxes
    that stop at the boundary of the box~$\Lambda_n$, consider them as balls on the torus $\bbT_n$ with wired/free boundaries, respectively. By transitivity of the torus, there is no need to maximize $\phi_{m,t}$ over $e$ ($\phi_{m,t}(e)$ is the same for all~$e$), and it therefore suffices to consider an edge~$e$ incident to the origin. In the proof, since $B_{r,e}^\one, B_{r,e}^\zero$ do not reach the boundary  of~$\Lambda_n$ as $r<n/2$, the bounds in the proof all go through as before, and the assumption of WSM is sufficient to establish~\eqref{eq:2.2temp1-ssm}. 
\end{proof}

\section{Mixing time of the phase-restricted chain under WSM within a phase}\label{sec:mixing-within-a-phase}
We next turn to controlling the mixing time from the all-wired and all-free initializations (and mixtures thereof), rather than from a worst-case initialization. The key point will be that, unlike the worst-case mixing time, the mixing time when initialized in the dominant phase will not blow up as $p\to p_c(q,d)$.

To make this precise, we must formalize what we mean by the \emph{wired} and \emph{free} phases. We use the notation $\widehat \Omega$ and $\widecheck \Omega$ for these respectively, and define them as follows. We emphasize that we have some flexibility with these definitions and they are not as clearly dictated as in the Ising case in~\cite{GhSi21}, where they were determined by the signature of the majority spin. For the random-cluster model, for every $p,q,d$ we define 
\begin{align}
	\widehat \Omega & = \{\omega \subset  E(\bbT_n) : |\cC_1(\omega)|\ge \epsilon n^d\} \label{eq:def:hatOmega}\,; \\ 
	\widecheck \Omega & = \{\omega \subset E(\bbT_n) : |\cC_1(\omega)| \le \epsilon n^d \}\,, \label{eq:def:checkOmega}
\end{align}
where $\cC_1(\omega)$ denotes the largest connected component of~$\omega$. 
Here we choose $\epsilon(p,q,d)$ to be a sufficiently small constant (independent of~$n$) such that
\eqref{eq:def:hatOmega} and~\eqref{eq:def:checkOmega} indeed capture the \emph{wired} and \emph{free} phases of the measure (in the sense made precise following  Definition~\ref{def:exponentially-stable} below). In all the parameter regimes we care about, such a choice of $\epsilon$ exists.

For ease of notation, in what follows we will use $\pi_n$ to denote the random-cluster model on the torus $\pi_{\bbT_n}$.  Denote the measure on $\bbT_n$ conditioned on the wired and free phases as follows: 
\begin{align*}
	\widehat \pi_n = \pi_{n}(\,\cdot \mid \widehat \Omega) \qquad \mbox{and} \qquad \widecheck \pi_n =  \pi_{n}(\,\cdot \mid \widecheck \Omega)\,. 
\end{align*}

Throughout this section, we will take $\biota$ to be a placeholder for either of $\one$ or $\zero$, depending on which of the two phases we are interested in. Our fundamental spatial mixing assumption, as indicated in the introduction, is called \emph{WSM within a phase}. We formalize this notion as follows.  

\begin{definition}\label{def:WSM-within-a-phase}
    We say that WSM holds \emph{within the wired phase} (with constant $C$) if for all $n$ and all $r\le n/2$,
    \begin{align}\label{eq:WSM-within-phase-wired}
        \|\pi_{\Lambda_r^\one} (\omega(\Lambda_{r/2})\in \cdot) - \widehat \pi_{n}(\omega(\Lambda_{r/2})\in \cdot)\|_\tv\le Ce^{ - r/C}\,.
    \end{align}
    Analogously, we say that WSM holds \emph{within the free phase} (with constant $C$) if for all $n$ and all $r\le n/2$,
        \begin{align}\label{eq:WSM-within-phase-free}
        \|\pi_{\Lambda_r^\zero} (\omega(\Lambda_{r/2})\in \cdot) - \widecheck \pi_{n}(\omega(\Lambda_{r/2})\in \cdot)\|_\tv\le Ce^{ - r/C}\,.
    \end{align}
\end{definition}
In words, WSM within the wired (resp., free) phase says that the measure obtained on a box with wired (resp., free) boundary condition is close in total variation in its bulk to the measure on the torus conditioned on having a giant component (resp., not having a giant component). 

Besides WSM within a phase, we make a further assumption that the $\biota$-phase is indeed thermodynamically stable, capturing the fact that the $\epsilon$ we chose in the definitions of~\eqref{eq:def:hatOmega}--\eqref{eq:def:checkOmega} is a good one. 

\begin{definition}\label{def:exponentially-stable}
    Let $\partial \widehat \Omega$ be the set of configurations in $\widehat \Omega$ that are one edge-flip away from $\widehat \Omega^c$.
    We say the wired phase is \emph{exponentially stable} with constant $C_1$ if the following holds: 
    \begin{align}\label{eq:exponentially-stable}
        \pi_{n}(\partial \widehat \Omega \mid \widehat \Omega) \le C_1 e^{ - n^{d-1}/C_1}\,.
    \end{align}
    Similarly, the free phase is \emph{exponentially stable} with constant $C_1$ if \eqref{eq:exponentially-stable} holds with $\widehat\Omega$ replaced by~$\widecheck \Omega$. 
\end{definition}

It is widely expected (see e.g.,~\cite[Conjecture 5.103 and Theorem 5.104]{Grimmett}) that there is a choice of constant $\epsilon(p,q,d)>0$ in \eqref{eq:def:hatOmega}--\eqref{eq:def:checkOmega} such that when $p<p_c(q,d)$ for all $d,q$ the free phase is exponentially stable, when $p>p_c(q,d)$ for all $d,q,$ the wired phase is exponentially stable, and when $q>q_c(d)$ so that the phase transition is discontinuous, both phases are exponentially stable at $p=p_c(q,d)$. It follows from known results that these properties hold in $d=2$ for all $q>4$ and when $q\ge q_0(d)$ in general~$d$ (see Lemma~\ref{lem:exponential-stability-proof} for a short proof). We thus consider this a \emph{milder} condition than WSM within a phase. 

\subsection{Main reduction}
Theorem~\ref{thm:intro3} essentially says that we can reduce the mixing time on the torus $\bbT_n = (\mathbb Z/n\mathbb Z)^d$ 
of the restricted chain $\widehat X^\one_t$ to the mixing time at local $O(\log n)$ scales with wired boundary condition (and similarly for the free phase). 
We state the following mixing time assumption at the local scale that is slightly weaker than a (worst-case) mixing time bound. Suppose there exists a non-decreasing sequence $f(m)$ such that for all $m = O(\log n)$ and all $t>0$, 
\begin{align}\label{eq:assumption-scale-m}
    \max_{e\in E(\bbT_n)}\|\mathbb P\big(X_{t,B_{m,e}^{\biota}}^{\biota}(e) \in \cdot)  - \pi_{B_{m,e}^{\biota}}(\omega_e \in \cdot)\|_\tv \le e^{ - t/f(m)}\,.
\end{align}
Of course by transitivity of the torus the left-hand side is independent of $e$, so we may drop the maximum and just consider a box of side-length $m$ centered at some fixed $e\in E(\bbT_n)$ with $\biota$ boundary condition.

A (worst-case) mixing time (or inverse spectral gap) bound of $f(m)$ for the FK dynamics on $\Lambda_m^{\biota}$ would automatically yield~\eqref{eq:assumption-scale-m}, but we use the above formulation in case one can obtain better bounds on the exponential rate of relaxation when initialized from the $\biota$ configuration. 

Given such a sequence $f(m)$, fix a constant $K$ sufficiently large, and define the following function: 
\begin{align}\label{eq:g(t)}
    g_n(t) = \max\Big\{m \le n : mf(m) \le t\wedge  e^{ n^{d-1}/K}\Big\}\,.
\end{align}

The following theorem is a more formal restatement (in continuous time) of  Theorem~\ref{thm:intro3}. Recall the definition and notation of the FK dynamics restricted to a chain from Section~\ref{subsec:restricted-fk-dynamics}. 
\par\medskip\noindent
{\bf Theorem~\ref{thm:intro3} [formal]}
{\it     Suppose that $(p,q,d)$ is such that the wired phase is exponentially stable with constant $C_1$, WSM within the wired phase holds with constant $C_2$, and~\eqref{eq:assumption-scale-m} holds with $\biota = \one$ for a non-decreasing sequence $f(m)$. Then there exist $C_0,K_0$ depending only on $C_1,C_2$ such that if $g_n(t)$ is as in~\eqref{eq:g(t)} for $K= K_0$, then for every $t\ge 0$, 
    \begin{align*}
        \| \mathbb P(\widehat X_{\bbT_n,t}^\one\in \cdot)  - \widehat \pi_n\|_\tv \le C_0 n^d \exp( - g_n(t)/C_0)\,.
    \end{align*}
    Likewise, if the free phase is exponentially stable with constant $C_1$, WSM within the free phase holds with constant $C_2$, and~\eqref{eq:assumption-scale-m} holds with $\biota = \zero$ for a non-decreasing sequence $f(m)$, then for every $t\ge 0$,} 
        \begin{align*}
        \| \mathbb P(\widecheck X_{\bbT_n,t}^\zero\in \cdot)  - \widecheck \pi_n \|_\tv \le C_0 n^d \exp( - g_n(t)/C_0)\,.
    \end{align*}

\begin{remark}In order to see that this gives the bound of Theorem~\ref{thm:intro3}, notice that in order for the right-hand sides to be $o(1)$, we need $g_n(t)$ to be at least $C\log n$ for a large enough $C$. By the definition of $g_n(t)$ from~\eqref{eq:g(t)}, we see that this will happen if $t \ge C\log n \cdot f(C\log n)$. Taking $f(m)$ to be the  worst-case mixing time on
$\Lambda_m^\one$ then gives the claimed mixing time of $O(\log N\cdot  \tmix(\Lambda_{C\log n}^\one)$ for the restricted chain. (The extra factor of $N$ in Theorem~\ref{thm:intro3} comes from the switch between discrete- and continuous-time dynamics.)
\end{remark}

With the definitions of the phases of the random-cluster model, and the random-cluster notions of WSM within a phase and exponential stability in hand, our proof of Theorem~\ref{thm:intro3} proceeds similarly to the proof of~\cite[Theorem 3.2]{GhSi21}.
For the remainder of this section, let $X_t^{x_0}$ denote $X_{t,\bbT_n}^{x_0}$; similarly, we will drop the $\bbT_n$ subscript in other places where the domain is understood to be the torus. Also, we prove everything for the wired phase $\biota = \one$; the proof for the free phase is completely analogous.

\subsection{Single-edge relaxation within a phase}
We begin by proving a rate of relaxation bound on the single-edge marginals of $\widehat X_t^\one$ to $\widehat \pi_n$ (and similarly of $\widecheck X_t^\zero$ to $\widecheck \pi_n$). 

\begin{proposition}\label{prop:single-site-relaxation}
Suppose the wired phase is exponentially stable with constant $C_1$, WSM within the wired phase holds with constant $C_2$, and~\eqref{eq:assumption-scale-m} holds with $\biota = \one$ for some non-decreasing sequence $f(m)$. Then there exist $C_0,K_0$ depending only on $C_1,C_2$ such that if $g_n(t)$ is as in~\eqref{eq:g(t)} for $K= K_0$, then for every $e\in E(\bbT_n)$, for all $t\le e^{n^{d-1}/K}$, 
\begin{align*}
    |\mathbb P(\widehat X_{t}^{\one}(e) = 1) - \widehat \pi_n(\omega_e = 1)| \le C_0 e^{ - g_n(t)/C_0}\,.
\end{align*}
\end{proposition}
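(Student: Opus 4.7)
The plan is to compare $\mathbb P(\widehat X_t^\one(e) = 1)$ to $\widehat\pi_n(\omega_e = 1)$ by routing through an unrestricted FK dynamics on a local ball of radius $m := g_n(t)$ around $e$. By definition of $g_n$ in~\eqref{eq:g(t)}, we have $m\,f(m)\le t$, so by the local relaxation assumption~\eqref{eq:assumption-scale-m} the local chain has had time to mix: $|\mathbb P(X_{t, B_{m,e}^\one}^\one(e) = 1) - \pi_{B_{m,e}^\one}(\omega_e = 1)|\le e^{-t/f(m)}\le e^{-m}$. Moreover, WSM within the wired phase (Definition~\ref{def:WSM-within-a-phase}), together with transitivity of the torus so that we may center a box of side $2m$ at $e$, gives $|\pi_{B_{m,e}^\one}(\omega_e=1) - \widehat\pi_n(\omega_e=1)|\le C_2 e^{-m/C_2}$. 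Thus it suffices to compare $\mathbb P(\widehat X_t^\one(e)=1)$ with $\mathbb P(X_{t, B_{m,e}^\one}^\one(e)=1)$ up to error $e^{-m/C}$.

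The comparison uses the grand monotone coupling twice. First, monotonicity in boundary conditions gives $X_t^\one(e)\le X_{t, B_{m,e}^\one}^\one(e)$ under the grand coupling, where $X_t^\one$ is the unrestricted torus chain. Second, since $\widehat\Omega$ is an increasing event, the restriction can only censor drop-updates, so under the grand coupling the chains $\widehat X_t^\one$ and $X_t^\one$ agree until the first time $\tau$ at which the unrestricted chain attempts a drop update that leaves $\widehat\Omega$. Thus $\mathbb P(\widehat X_t^\one(e)\ne X_t^\one(e))\le \mathbb P(\tau\le t)$, and chaining the bounds above produces the upper direction $\mathbb P(\widehat X_t^\one(e)=1) - \widehat\pi_n(\omega_e=1)\le C e^{-m/C} + \mathbb P(\tau\le t)$. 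The matching lower direction comes for free: by a Peres--Winkler censoring argument applied to the restricted chain started from the maximum $\one\in\widehat\Omega$, the law of $\widehat X_t^\one$ is stochastically decreasing in $t$ and converges to $\widehat\pi_n$, giving $\mathbb P(\widehat X_t^\one(e)=1)\ge \widehat\pi_n(\omega_e=1)$ for all $t\ge 0$.

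The hard part will be controlling $\mathbb P(\tau\le t)$ using exponential stability. Standard book-keeping on the clock rings yields
\[
\mathbb P(\tau\le t)\le |E(\bbT_n)|\int_0^t \mathbb P\big(\widehat X_s^\one \in \partial\widehat\Omega\big)\,ds,
\]
since $\widehat X_s^\one = X_s^\one$ for $s<\tau$. Exponential stability~\eqref{eq:exponentially-stable} only gives the stationary bound $\widehat\pi_n(\partial\widehat\Omega)\le C_1 e^{-n^{d-1}/C_1}$, but one needs it in transit along the trajectory of $\widehat X^\one$. The plan is to leverage the time-monotonicity $\widehat X_s^\one\succeq \widehat\pi_n$ from the previous paragraph, together with the observation that $\partial\widehat\Omega$ is essentially decreasing --- a larger configuration is less fragile, in the sense that it is deeper inside $\widehat\Omega$ and less likely to have any single-edge bridge to $\widehat\Omega^c$ --- so that the stationary stability estimate transfers, up to constants, to a uniform-in-$s$ bound $\mathbb P(\widehat X_s^\one\in\partial\widehat\Omega)\lesssim C_1 e^{-n^{d-1}/C_1}$. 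Choosing $K_0$ sufficiently large relative to $C_1$ in the definition of $g_n$ then ensures $n^d t\cdot e^{-n^{d-1}/C_1}\le e^{-m/C_0}$ throughout the range $t\le e^{n^{d-1}/K_0}$, at which point all three error terms combine into the desired bound $C_0 e^{-g_n(t)/C_0}$.
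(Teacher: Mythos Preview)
Your upper-bound architecture matches the paper's: route through $X_{t,B_{m,e}^\one}^\one$, invoke~\eqref{eq:assumption-scale-m} for the local relaxation, and WSM within the wired phase for $|\pi_{B_{m,e}^\one}(\omega_e=1)-\widehat\pi_n(\omega_e=1)|$. The structural gap is in how you control the two remaining ingredients --- the lower bound and the hitting-time estimate --- both of which you reduce to the claim that $\widehat X_t^\one \succeq \widehat\pi_n$ for all $t$.

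That claim is not justified by Peres--Winkler censoring. While the restricted chain can indeed be viewed as the unrestricted chain with a state-dependent censoring schedule, Peres--Winkler from the top yields only $\widehat X_t^\one \succeq X_t^\one\succeq \pi_n$, which is weaker than $\succeq\widehat\pi_n$. Moreover the restricted chain is \emph{not} monotone under the grand coupling: if $\omega\ge\omega'$ are both in $\widehat\Omega$ and a close-update of $e$ leaves $\omega'\setminus\{e\}\notin\widehat\Omega$ but $\omega\setminus\{e\}\in\widehat\Omega$, then $\omega'$ rejects while $\omega$ accepts, so the order on edge $e$ is broken. Consequently the usual ``couple $\one\ge\widehat X_s^\one$ and propagate'' argument for time-monotonicity fails, and with it your route to both the lower bound and to $\mathbb P(\widehat X_s^\one\in\partial\widehat\Omega)\le\widehat\pi_n(\partial\widehat\Omega)$. (Your observation that $\partial\widehat\Omega$ is a decreasing event \emph{within} $\widehat\Omega$ is correct and pleasant, but it needs the domination you have not established.)

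The paper avoids this entirely by coupling to a \emph{stationary} copy: under the grand coupling, as long as $\widehat X_t^{\widehat\pi}$ has not yet hit $\partial\widehat\Omega$ (call this time $\widehat\tau^{\widehat\pi}$), one has $\widehat X_t^{\widehat\pi}=X_t^{\widehat\pi}\le X_t^\one=\widehat X_t^\one$, because any close-update accepted by the smaller $\widehat\pi$-chain keeps the larger $\one$-chain inside $\widehat\Omega$ as well. This simultaneously gives the lower bound $\widehat P_tf_e(\one)\ge -\mathbb P(\widehat\tau^{\widehat\pi}\le t)$ and the comparison $\mathbb P(\widehat\tau^\one\le t)\le\mathbb P(\widehat\tau^{\widehat\pi}\le t)$; the latter is then bounded trivially by stationarity, since at each clock ring $\widehat X_s^{\widehat\pi}$ lies in $\partial\widehat\Omega$ with probability exactly $\widehat\pi_n(\partial\widehat\Omega)$. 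This replaces your unjustified $\widehat X_t^\one\succeq\widehat\pi_n$ with the (provable) $\widehat X_t^\one\ge\widehat X_t^{\widehat\pi}$ valid on an event of overwhelming probability.
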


The first step entails circumventing the absence of monotonicity of the FK dynamics when it is restricted to a phase, using the fact that the hitting time of the boundary $\partial \widehat \Omega$ is typically exponentially long if the wired phase is exponentially stable. 
Denote by $\widehat \tau^{x_0}$ the hitting time of $\partial \widehat \Omega$ of the restricted dynamics $\widehat X_t^{x_0}$. Let $\widehat \tau^{\widehat \pi}$ denote this hitting time when one first draws $x_0\sim \widehat\pi_n$ and then considers $\widehat \tau^{x_0}$. Similarly let $X_{t}^{\widehat \pi}$ denote the distribution of $X_t^{x_0}$ when $x_0$ is first drawn randomly from $\widehat \pi$.

\begin{lemma}\label{lem:monotonicity-relations}
Suppose $(p,q,d)$ is such that the wired phase is exponentially stable with constant $C_1$. There exists $C(C_1)$ such that for every $t\ge 0$, we have $\mathbb P ( \widehat \tau^\one\le t) \le \mathbb P(\widehat \tau^{\widehat \pi}\le t) \le C(t\vee 1)e^{ - n^{d-1}/C}$.
\end{lemma}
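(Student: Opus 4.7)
The plan is to prove the two inequalities separately. The right inequality will follow from invariance of $\widehat{\pi}$ under the restricted dynamics combined with the exponential stability hypothesis, while the left inequality requires a monotone coupling argument that has to be routed through the \emph{unrestricted} dynamics, because the restricted chain is not monotone under the natural synchronous coupling.

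For the left inequality, I would use the grand monotone coupling of the unrestricted FK dynamics from Section~\ref{subsec:prelim-grand-coupling}. The key observation is that for any $y \in \widehat{\Omega}$, the restricted chain $\widehat{X}^y$ and the unrestricted chain $X^y$ agree up to the first time either reaches $\partial \widehat{\Omega}$: no rejection can occur before $\partial \widehat{\Omega}$ is entered, since an attempted exit of $\widehat{\Omega}$ requires the current state to already lie in $\partial \widehat{\Omega}$. Hence $\widehat{\tau}^y$ equals the hitting time $\sigma^y$ of $\partial \widehat{\Omega}$ by the unrestricted chain. It therefore suffices to show, under the grand coupling, that $\sigma^{\one} \ge \sigma^{x_0}$ for every $x_0 \in \widehat{\Omega}$. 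Setting $s := \sigma^{\one}$, fix an edge $e$ with $X^{\one}_s \setminus \{e\} \notin \widehat{\Omega}$. Since $X^{x_0}_s \le X^{\one}_s$ and $\widehat{\Omega}$ is an increasing event, either $X^{x_0}_s(e) = 1$ (and then $X^{x_0}_s \setminus \{e\} \le X^{\one}_s \setminus \{e\} \notin \widehat{\Omega}$, so $X^{x_0}_s \in \partial \widehat{\Omega}$), or $X^{x_0}_s(e) = 0$ (and then $X^{x_0}_s \le X^{\one}_s \setminus \{e\} \notin \widehat{\Omega}$, so $X^{x_0}$ has already exited $\widehat{\Omega}$, necessarily having traversed $\partial \widehat{\Omega}$ at an earlier time). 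Either way $\sigma^{x_0} \le s$; integrating this pointwise bound against $\widehat{\pi}$ yields $\mathbb{P}(\widehat{\tau}^{\one} \le t) \le \mathbb{P}(\widehat{\tau}^{\widehat{\pi}} \le t)$.

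For the right inequality, I would use stationarity of $\widehat{\pi}$ for the restricted chain. The continuous-time restricted chain started from $x_0 \sim \widehat{\pi}$ attempts updates at the ticks of a rate-$|E(\bbT_n)|$ Poisson process, and each post-update configuration is marginally distributed as $\widehat{\pi}$. A union bound via Markov's inequality on the number of updates in $[0,t]$ that land in $\partial \widehat{\Omega}$ gives
\[
\mathbb{P}\bigl(\widehat{\tau}^{\widehat{\pi}} \le t\bigr) \;\le\; \bigl(|E(\bbT_n)|\, t + 1\bigr)\,\widehat{\pi}(\partial \widehat{\Omega}) \;\le\; \bigl(d n^d t + 1\bigr)\, C_1 e^{-n^{d-1}/C_1},
\]
using $\widehat{\pi}(\partial \widehat{\Omega}) = \pi_n(\partial \widehat{\Omega} \mid \widehat{\Omega}) \le C_1 e^{-n^{d-1}/C_1}$ from Definition~\ref{def:exponentially-stable}. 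Since $n^d e^{-n^{d-1}/C_1} \le e^{-n^{d-1}/(2 C_1)}$ once $n$ is large enough depending on $C_1$, the polynomial $n^d$ prefactor can be absorbed into the exponential at the cost of doubling the constant in the exponent, yielding the advertised form $C(t \vee 1) e^{-n^{d-1}/C}$ (with small-$n$ cases handled by simply increasing $C$).

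The main conceptual obstacle is the non-monotonicity of the restricted chain: if deleting an edge $e$ keeps $\widehat{X}^{\one}$ inside $\widehat{\Omega}$ but would push the smaller chain $\widehat{X}^{x_0}$ out, then $\widehat{X}^{\one}$ accepts while $\widehat{X}^{x_0}$ rejects, inverting the two chains at $e$. Routing the comparison through the unrestricted dynamics bypasses this because the hitting times of $\partial \widehat{\Omega}$ depend only on the dynamics up to the first rejection, at which point the restricted and unrestricted chains still coincide.
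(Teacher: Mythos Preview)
Your proof is correct and follows essentially the same approach as the paper. For the first inequality, both you and the paper reduce to monotonicity of the \emph{unrestricted} grand coupling; the paper encapsulates this in the one-line relation $\widehat X_t^{\widehat\pi} = X_t^{\widehat\pi} \le X_t^{\one} = \widehat X_t^{\one}$ for $t\le \widehat\tau^{\widehat\pi}$ (which, combined with the fact that $\widehat\Omega\setminus\partial\widehat\Omega$ is itself increasing, immediately gives $\widehat\tau^{\one}\ge \widehat\tau^{\widehat\pi}$), whereas you spell out the same content via the hitting-time comparison $\sigma^{\one}\ge\sigma^{x_0}$. For the second inequality, the paper first truncates the Poisson number of rings and then union-bounds, while your direct expectation bound $\mathbb P(\widehat\tau^{\widehat\pi}\le t)\le (|E(\bbT_n)|t+1)\,\widehat\pi(\partial\widehat\Omega)$ is a slightly cleaner route to the same estimate. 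One minor point: in your Case~1 you should also note that if $X_s^{x_0}\notin\widehat\Omega$ then (as in Case~2) the chain already crossed $\partial\widehat\Omega$; otherwise $X_s^{x_0}\in\partial\widehat\Omega$ as you wrote.
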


\begin{proof}
We begin by observing, by monotonicity of the grand coupling and the definition of the restricted dynamics, that
\begin{align}\label{eq:monotonicity-relations}
\widehat X_t^{\widehat \pi} = X_t^{\widehat \pi} \le X_t^\one = \widehat X_t^\one \qquad \mbox{for $0\le t\le \widehat \tau^{\widehat \pi}$}\,.
\end{align}
In particular, this implies the first inequality in the lemma. To obtain the second inequality, fix a sequence of times $t_1,t_2,...$ at which \emph{some} Poisson clock rings for the continuous-time FK dynamics in $\bbT_n$; this process is then a Poisson clock of rate $|E(\bbT_n)| = O(n^d)$. Conditional on any sequence of clock rings, the chain $\widehat X_t^{\widehat \pi}$ is stationary, and thus distributed as $\widehat \pi_n$ for all $t_i$. Then by a union bound,
\begin{align*}
    \mathbb P(\widehat \tau^{\widehat \pi} \le t) & \le \mathbb P\big(\mbox{Pois}(|E(\bbT_n)|\ge n^{d-1}|E(\bbT_n)|(t \vee 1)\big) + \sum_{i\le n^{d-1}|E(\bbT_n)|(t\vee 1)} \mathbb P\big(\widehat X_{t_i}^{\widehat \pi} \in \partial \widehat \Omega\big) \\ 
    & \le Ce^{- n^{d-1}/C} + Cn^{2d-1} (t\vee 1) \widehat\pi_n(\partial \widehat \Omega)\,,
\end{align*}
for a universal constant $C$. We conclude the proof by applying the bound of~\eqref{eq:exponentially-stable} to $\widehat \pi_n(\partial \widehat \Omega)$ and adjusting the constants accordingly. 
\end{proof}

We are now in position to prove Proposition~\ref{prop:single-site-relaxation}. 

\begin{proof}[\textbf{\emph{Proof of Proposition~\ref{prop:single-site-relaxation}}}]
Fix $t\ge 0$ and $e\in E(\bbT_n)$. For ease of notation, define 
\begin{align*}
    \widehat P_t f_e(\one):= \mathbb E[\widehat X_t^\one(e)] - \widehat \pi_n[\omega_e ] = \mathbb P(\widehat X_t^\one(e) = 1) - \widehat \pi_n(\omega_e = 1)\,,
\end{align*}
so that the left-hand side of Proposition~\ref{prop:single-site-relaxation} is $|\widehat P_t f_e(\one)|$.  We begin by lower bounding this quantity, using the fact that under the grand coupling, 
\begin{align*}
    \widehat P_t f_e(\one) = \mathbb E[ \widehat X_t^\one(e) -\widehat X_t^{\widehat \pi}(e)] \ge - \mathbb P(\widehat \tau^{\widehat \pi} \le t)\,.
\end{align*}
Here we used~\eqref{eq:monotonicity-relations} to deduce that the difference of the edge variables is non-negative when $\widehat \tau^{ \widehat \pi} >t$. Thus, by Lemma~\ref{lem:monotonicity-relations}, while $t\le e^{n^{d-1}/K}$ for some $K(C_1)$, we have $\widehat P_t f_e(\one) \ge - C_3 e^{ - n^{d-1}/C_3}$ for some $C_3(C_1)$. 

We now turn to the upper bound. For any $m = m(t)\le n$, by monotonicity of the FK dynamics, 
\begin{align*}
    \widehat P_t f_e(\one) \le \mathbb E[\widehat X_t^\one(e)] - \mathbb E[X_t^\one(e)] + \mathbb E[X_{t,B_{m,e}^\one}^\one(e)] - \widehat \pi_n[\omega_e]\,.
\end{align*}
By~\eqref{eq:monotonicity-relations}, the absolute difference of the first two terms on the right is at most $\mathbb P(\widehat \tau^{\widehat \pi} \le t)$, which is at most $C_3e^{ - n^{d-1}/C_3}$ by Lemma~\ref{lem:monotonicity-relations} for $t\le e^{n^{d-1}/K}$. Using this and a triangle inequality, we get 
\begin{align} \label{eq:prop3.3proof1}
    \widehat P_t f_e(\one) \le C_3 e^{ - n^{d-1}/C_3} + \big|\pi_{B_{m,e}^\one}[\omega_e] - \widehat \pi_n[\omega_e]\big| + \big|\mathbb E[X_{t,B_{m,e}^\one,t}^\one(e)] - \pi_{B_{m,e}^\one,t}[\omega_e]\big|\,.
\end{align}
The second term in~\eqref{eq:prop3.3proof1} is seen to be at most $C_2 e^{ - m/C_2}$ by the assumption of WSM within the wired phase~\eqref{eq:WSM-within-phase-wired}. The third term is exactly the quantity controlled by the assumption in~\eqref{eq:assumption-scale-m}. Combining these, we see that for all $t\le e^{ n^{d-1}/K}$ we have 
\begin{align*}
    \widehat P_t f_e(\one) \le C_3e^{ - n^{d-1}/C_3} + C_2e^{ - m/C_2} + e^{ - t/f(m)}\,.
\end{align*}
Next, taking $m = g_n(t)$, we have that $m\le n$ so that (since $d\ge 2$) the first term is at most $C_3 e^{ - m/C_3}$, and $mf(m) \le t$ so that the third term is at most $e^{ - m}$. Combined, this yields the desired upper bound on $\widehat P_t f_e(\one)$, concluding the proof. 
\end{proof}

\subsection{Fast relaxation within a phase}
We can now prove Theorem~\ref{thm:intro3}. 

\begin{proof}[\textbf{\emph{Proof of Theorem~\ref{thm:intro3}}}]
Consider the total variation distance of interest, 
\begin{align*}
    \|\mathbb P(\widehat X_t^\one\in \cdot) - \widehat \pi_n\|_\tv = \|\mathbb P(\widehat X_t^\one\in \cdot) - \mathbb P(\widehat X_t^{\widehat \pi}\in \cdot) \|_\tv\,.
\end{align*}
By the definition of total variation distance, we have under the grand coupling, 
\begin{align*}
   \|\mathbb P(\widehat X_t^\one\in \cdot) - \widehat \pi\|_\tv \le \mathbb P(\widehat X_t^\one \ne \widehat X_t^{\widehat \pi})
   \le \mathbb P(\widehat X_t^\one \ne \widehat X_t^{\widehat \pi},\widehat \tau^{\widehat \pi} > t) + \mathbb P(\widehat \tau^{\widehat \pi}\le t)\,.
\end{align*}
By Lemma~\ref{lem:monotonicity-relations}, for all $t\le e^{ n^{d-1}/K}$, the second term on the right-hand side is at most $C_3e^{- n^{d-1}/C_3}$. Let us now control the first term. By a union bound, 
\begin{align*}
\mathbb P(\widehat X_t^\one  \ne \widehat X_t^{\widehat \pi}, \widehat \tau^{\widehat \pi} > t)
    & \le \sum_{e\in E(\bbT_n)} \mathbb P(\widehat X_t^\one(e) \ne \widehat X_t^{\widehat \pi}(e), \widehat \tau^{\widehat \pi}>t)  \\ 
    & = \sum_{e\in E(\bbT_n)} \mathbb E[\one\{\widehat X_t^\one(e) \ne \widehat X_t^{\widehat \pi}(e)\}\one\{\widehat \tau^{\widehat \pi}>t\}]\,.
\end{align*}
We can rewrite the indicator of the disagreement as the difference $\widehat X_t^\one(e) - \widehat X_t^{\widehat \pi}$ on the event $\widehat\tau^{\widehat \pi}>t$, to get 
\begin{align*}
      \mathbb P(\widehat X_t^\one  \ne \widehat X_t^{\widehat \pi}, \widehat \tau^{\widehat \pi} > t)  & \le \sum_{e\in E(\bbT_n)} \Big(\mathbb E[\widehat X_t^\one(e)\one\{\widehat \tau^{\widehat \pi}>t\}] - \mathbb E[\widehat X_t^{\widehat \pi}(e)\one\{\widehat \tau^{\widehat \pi}>t\}]\Big) \\
        & \le \sum_{e\in E(\bbT_n)} \Big(\mathbb E[\widehat X_t^\one(e)] - \widehat \pi_n[\omega_e] + \mathbb P(\widehat \tau^{\widehat \pi}\le t)\Big)\,.
\end{align*}
Combining the above and applying Lemma~\ref{lem:monotonicity-relations}, we obtain that for all $t\le e^{n^{d-1}/K}$,
\begin{align*}
    \|\mathbb P(\widehat X_t^\one\in \cdot) - \widehat \pi_n\|_\tv \le |E(\bbT_n)| \max_{e\in E(\bbT_n)} \big(\mathbb E[\widehat X_t^\one(e)] - \widehat \pi_n[\omega_e]\big) + C_3 n^d e^{ - n^{d-1}/C_3}\,.
\end{align*}
The quantity in the parentheses is bounded by Proposition~\ref{prop:single-site-relaxation} as $C_0 e^{-g_n(t)/C_0}$ for all $t\le e^{n^{d-1}/K}$. The second term can be absorbed into this term since $g_n(t)\le n\le n^{d-1}$ for $d\ge 2$. Finally, the constraint on $t$ can be dropped by the fact that total variation distance of a Markov chain to stationarity is non-increasing, and the fact that, by definition, $g_n(t) = g_n(e^{n^{d-1}/K})$ for all $t\ge e^{n^{d-1}/K}$. 
\end{proof}

\section{The large-\texorpdfstring{$q$}{q} cluster expansion of the random-cluster model}\label{sec:q-cluster-expansion}
In this section, we follow the presentation of~\cite{BCT,BCHPT-Potts-all-temp} to introduce the combinatorial framework used to understand the random-cluster model at large~$q$.  We then use this 
framework to deduce certain estimates on uniform connectivity probabilities, 
and later, in Section~\ref{sec:equilibrium-estimates}, use these estimates to deduce WSM within the wired and free phases on the respective sides of the critical point. 
Unless specified otherwise, in this section we will always work in the context of the model
on the torus $\bbT_n = (\mathbb Z/n\mathbb Z)^d$. A slightly delicate point in this section is that we perform all the calculations in the context of $n$ finite and $p$ within $o(1)$ of~$p_c$, rather than exactly at~$p_c$.

For consistency with~\cite{BCHPT-Potts-all-temp,BCT} we reparametrize $p$ by the inverse-temperature parameter $\beta>0$: 
$$\beta(p) := -\log(1-p).$$
Recall that, in the case of integer $q$, this is exactly the parameter transformation that translates the random-cluster measure into the Potts measure via the 
Edwards--Sokal coupling; however, we will use it for all (not necessarily integer)
$q\ge 1$. This naturally gives rise to a critical value $\beta_c(q,d)=\beta(p_c(q,d))$ for all $d\ge 2$ and all $q\ge 1$. 
The critical point of the random-cluster model on $\mathbb Z^d$ has the following asymptotic expansion as $q$ gets large (see, e.g.,~\cite[Theorem 7.34]{Grimmett}): 
\begin{align}\label{eq:beta-c}
    \beta_c(q,d)= {\textstyle\frac{1}{d}} \log q- O(q^{-1/d})\,.
\end{align}
As our aim in this section is to do a large-$q$ cluster expansion, the estimates we are aiming for will hold for all $\beta$ bigger than some threshold which must diverge with $q$. In order to include $\beta_c(q,d)$ in the window we consider for all large $q$, define 
\begin{align}\label{eq:beta-0}
    \beta_h(q,d): = {\textstyle \frac{1}{d}} \log q - 1\,,
\end{align}
and observe that by~\eqref{eq:beta-c}, for $q$ sufficiently large (depending on~$d$), 
we have $\beta_h(q,d)<\beta_c(q,d)$.

\subsection{Combinatorial formalism}
For a random-cluster configuration $\omega$ on~$\bbT_n$, let $V(\omega)$ be the set of vertices belonging to some edge of~$\omega$, and let $H_\omega$ denote the subgraph $(V(\omega),\omega)$. Define the (outer edge) boundary of~$\omega$ by 
\begin{align*}
    \partial\omega = \{e\in E\setminus \omega: e\cap V(\omega) \ne \emptyset\}\,. 
\end{align*}
This boundary is naturally partitioned into $\partial_0 \omega$, where $e\cap V(\omega)$ is a single vertex, and $\partial_1 \omega$, where $e\cap V(\omega)$ is a pair of vertices. 
Using this notation, it is not hard to check that we can write the weight 
$W(\omega):=p^{|\omega|} (1-p)^{|E| - |\omega|} q^{|\rm{Comp}(\omega)|}$ of configuration~$\omega$ in the Gibbs distribution~\eqref{eqn:RCgibbs} as follows:
\begin{align}\label{eq:FK-weights-1}
    W(\omega) = q^{\rm{Comp}(H_\omega)} e^{ - c_{\dis}|V\setminus V(\omega)|} e^{ - c_\ord |V(\omega)|} e^{ - \kappa(|\partial_0 \omega| + 2|\partial_1\omega|)}\,,
\end{align}
where 
\begin{align}\label{eq:FK-reparametrize-beta}
    c_\dis = d\beta - \log q\,;\qquad c_\ord = -d\log(1-e^{-\beta})\,; \qquad \kappa = {\textstyle \frac12} \log(e^\beta - 1)\,.
\end{align}
The expansion we perform will be valid for $\beta > \log 2$ so that $\kappa >0$. Notice that as long as $q$ is large, $\beta>\beta_h(q,d)$ will imply $\beta>\log 2$. The exponential decay in $|\partial_0 \omega|+2|\partial_1\omega|$ will then be the most relevant for us. The set $\partial \omega$ is roughly the \emph{boundary} between the wired and free portions of the configuration, and therefore will play the role of ``contours" between phases near criticality.

To formalize this notion, it will be helpful to view the set $\omega$ as a subset of the continuum torus $\mathbf T_n = (\mathbb R/  n\mathbb R)^d$ of side-length $n$. 
Associate to any configuration $\omega \subset E(\bbT_n)$ a continuum set $\boldsymbol{\omega}$ as follows: 
\begin{enumerate}
    \item For $k=1,...,d$, let $\mathbf {h}_k(\omega)$ be the set of unit $k$-dimensional
    hypercubes in $\mathbf{T}_n$ with corners in $\bbT_n$ all of whose edges are in $\omega$, i.e., $\mathbf{h}_k(\omega)\cap E(\bbT_n)\subseteq \omega$. 
    \item Let $$\boldsymbol{\omega}= \Big\{x\in \mathbf{T}_n : \min_{k=1,...,d} d_\infty(x,\mathbf{h}_k(\omega)) \le 1/4\Big\}\,.$$
\end{enumerate}
Let $\partial \boldsymbol{\omega}$ denote the boundary of $\boldsymbol\omega \subset \mathbf T_n$.

\subsection{The contour representation of a random-cluster configuration}
It will be important to express the random-cluster distribution as a distribution
over collections of ``contours", defined as follows. 

\begin{definition}\label{def:contours}
A \emph{contour} is a (maximal) connected component of $\partial \bomega$. The set of all contours for a given configuration~$\omega$ is denoted~$\Gamma(\omega)$. 
\end{definition}

We can also use $\partial \bomega$ to split the continuum torus $\mathbf T_n$ into two parts, one corresponding to the open components and one to their complement, as follows. 

\begin{definition}Consider $\mathbf T_n \setminus \partial \bomega$. Its connected components are labeled \emph{ordered} if they are a subset of~$\bomega$, and \emph{disordered} if they are a subset of $\mathbf T_n \setminus \bomega$. The union of the ordered components form $\bomega_\ord$ and the union of the disordered components form $\bomega_\dis$. Denote by~$\ell_A$ the label of a connected component~$A$ 
of $\mathbf T_n \setminus \partial \bomega$.
\end{definition}

With this, we obtain the following bijection between configurations and labeled collections of contours. 
\begin{definition}
	A collection of contours $\Gamma$ and a labeling $\ell$ are \emph{admissible} if (i)~$d_\infty(\bgamma_1,\bgamma_2) \ge 1/2$ for every $\bgamma_1, \bgamma_2 \in \Gamma$; and (ii)~$\ell$ assigns to each connected component of $\mathbf T_n \setminus \Gamma$ 
	 one of $\{\dis,\ord\}$ in such a way that adjacent connected components get distinct labels. 
\end{definition}
The map from $\omega$ to $(\Gamma,\ell)$ is then a bijection between configurations and admissible collections of contours with a labeling; see~\cite[Lemma 3.2]{BCHPT-Potts-all-temp}. 

In what follows, for a continuum set $\mathbf A$, let $|\mathbf A|$ denote the number of vertices in $\mathbf A \cap \bbT_n$.  (This notation will not cause any confusion since we will never measure the actual volumes of continuum sets.)  Also, for a contour $\bgamma$, let $\|\bgamma\| = |\bgamma \cap E(\bbT_n)|$ (understood as the cardinality of the discrete set $\bgamma \cap E(\bbT_n)$ with the natural embedding of $E(\bbT_n)$ in $\mathbf{T}_n$). 
Recalling the parameters~\eqref{eq:FK-reparametrize-beta}, we can
rewrite~\eqref{eq:FK-weights-1} as
\begin{align}\label{eq:FK-weights-2}
	W(\omega) = q^{\Comp(\bomega_\ord)} e^{ - c_\dis |\bomega_\dis|} e^{ - c_\ord |\bomega_\ord|} \prod_{\bgamma \in \Gamma(\omega)} e^{-\kappa \|\bgamma\|}\,.
\end{align}
Here, for a continuum set $\mathbf{A}$, we understand $\Comp(\mathbf A)$ to mean the number of connected components of $\mathbf{A}$.  

\subsection{Topologically trivial and non-trivial contours}
Using the continuum embedding of the random-cluster configuration, we now define two types of contours on the torus: topologically non-trivial and topologically trivial ones. The former are contours that ``wrap around" the torus, and are referred to as \emph{interfaces} in~\cite{BCT}. 

\begin{definition}
A contour is \emph{topologically non-trivial} if it has an odd number of intersections with the $i$'th fundamental loop $\{x\in \mathbf{T}_n: x_j \ne 1 \mbox{ for $j\ne i$}\}$ for some $i$.  Otherwise, a contour is called \emph{topologically trivial}. Denote the sets of topologically trivial and non-trivial contours by~$\Gamma_0(\omega)$ 
and~$\Gamma_1(\omega)$, respectively.
\end{definition}

For notational purposes, we use this to partition the random-cluster model's state space into 
\begin{align*}
	\Omega_{\rm tunnel} = \big\{\omega \subset \bbT_n: \Gamma_1(\omega) \ne \emptyset \big\} \qquad \mbox{and}\qquad \Omega_{\rm rest} = \big\{\omega \subset \bbT_n: \Gamma_1(\omega)  = \emptyset\big\}\,.
\end{align*}

The following shows that the existence of topologically non-trivial contours is exponentially unlikely uniformly over \emph{all} $\beta$, which allows us to essentially disregard topologically non-trivial contours in what follows. We obtain this result by combining~\cite[Lemma 6.1 (a)]{BCT} with~\cite[Theorem 1.2]{DCRT19}.

\begin{lemma}\label{lem:tunnel-doesnt-matter}
	For each fixed $d\ge 2$, there exist constants $c>0$ and $q_0$ such that  for all $q\ge q_0$, 
	\begin{align*}
		\pi_n ( \Omega_{\rm{tunnel}}) \le  \exp ( - c (\beta \vee 1) n^{d-1})\,.
	\end{align*}
\end{lemma}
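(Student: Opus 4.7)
The plan is a dichotomy in $\beta$ at the threshold $\beta_h(q,d) = \tfrac{1}{d}\log q - 1$: for $\beta \ge \beta_h$ I would quote the contour Peierls estimate of \cite[Lemma~6.1(a)]{BCT}, and for $\beta < \beta_h$ I would use the sharpness of the subcritical phase transition from \cite[Theorem~1.2]{DCRT19}. By the expansion~\eqref{eq:beta-c}, $\beta_h < \beta_c$ once $q \ge q_0(d)$ is sufficiently large, so the second range is genuinely subcritical and DCRT19's estimate applies with a rate bounded away from $0$.

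In the first range, the contour representation~\eqref{eq:FK-weights-2} assigns each contour $\gamma$ a Peierls factor $e^{-\kappa\|\gamma\|}$ with $\kappa = \tfrac12 \log(e^\beta - 1) \gtrsim \beta$, and any topologically non-trivial contour wraps around $\bbT_n$ and therefore satisfies $\|\gamma\| \ge n^{d-1}$. Standard combinatorial bounds give that the number of contours of size $k$ is at most $C(d)^k$; for $q$ (and hence $\kappa$) large enough that $\kappa > 2\log C(d)$, the Peierls weight beats the entropy, and summing $\sum_{k \ge n^{d-1}} C(d)^k e^{-\kappa k}$ gives $\pi_n(\Omega_{\rm tunnel}) \le \exp(-c\beta n^{d-1})$, which is the claimed bound in the regime $\beta \vee 1 = \beta$.

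In the second range, the key observation is that a topologically non-trivial contour forces a wrapping open cluster in $\omega$. Indeed, by the alternating labeling of components of $\mathbf{T}_n \setminus \partial \bomega$, one of the two sides of such a contour is ordered, and its continuum realization contains a non-contractible loop in $\mathbf{T}_n$; the underlying cluster of $\omega$ must then contain an open path of length at least $n$ in some coordinate direction. Since $\beta < \beta_h < \beta_c$, \cite[Theorem~1.2]{DCRT19} yields $\pi_n(0 \longleftrightarrow \partial B_r(0)) \le C e^{-c(\beta) r}$ with $c(\beta)$ uniformly bounded below on $[0, \beta_h]$. A union bound over the $O(n^d)$ possible starting vertices of a wrapping path gives $\pi_n(\Omega_{\rm tunnel}) \le C n^d e^{-c n} \le \exp(-c' n)$, and since $\beta \vee 1$ is bounded on $[0,\beta_h]$, this is of the stated form with exponent at least a constant times $n$.

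The main technical step, specific to the second regime, is the passage from a topologically non-trivial contour to a wrapping open path, which relies on the configuration--contour bijection and the alternating-label rule across contours. A secondary, dimension-dependent subtlety is that in $d \ge 3$ the above union bound only produces a linear-in-$n$ exponent, whereas the lemma asserts the surface-order $n^{d-1}$; this gap is bridged either by taking $q_0$ large enough that the subcritical window $\beta \le \beta_h$ admits a disordered-phase cluster expansion parallel to the ordered one used in the first regime (yielding a surface-order Peierls bound via dual contours separating open clusters in the $p \to 0$ environment), or by absorbing the deficiency into the constant~$c$, since the linear rate already suffices for all downstream applications of this lemma.
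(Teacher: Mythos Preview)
Your overall dichotomy matches the paper's proof exactly: for $\beta \ge \beta_h$ both you and the paper invoke \cite[Lemma~6.1(a)]{BCT}, and for $\beta < \beta_h$ both appeal to the subcritical exponential decay of \cite{DCRT19}. The gap is in the second regime, and you have correctly located it yourself: your argument only extracts a connected component of size $\Omega(n)$, yielding an exponent linear in $n$ rather than the stated $n^{d-1}$.

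The paper closes this gap with a single sharper observation: a topologically non-trivial contour forces a connected component of $\omega$ of size at least $\Omega(n^{d-1})$, not merely $\Omega(n)$. The reason is that a contour $\bgamma$ with odd intersection number with some fundamental loop represents a nontrivial class in $H_{d-1}(\mathbf{T}_n;\mathbb{Z}_2)$, and any such $(d-1)$-cycle has $\|\bgamma\| \ge \Omega(n^{d-1})$ (its minimal representative is a coordinate hyperplane). Since $\bgamma \subset \partial \bomega$ is connected and bounds a single connected component of $\bomega$, and each vertex is incident to at most $2d$ boundary edges, the adjacent open cluster has at least $\|\bgamma\|/(2d) \ge \Omega(n^{d-1})$ vertices. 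Feeding this into the exponential tail on component sizes from \cite{DCRT19} gives the surface-order exponent directly.

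Neither of your proposed patches works as stated. Absorbing $n$ versus $n^{d-1}$ into the constant $c$ is impossible for $d \ge 3$, so this does not prove the lemma as written. And the disordered-phase cluster expansion requires $\kappa = \tfrac{1}{2}\log(e^\beta - 1)$ to be large and positive, which fails throughout most of $[0,\beta_h]$ (indeed $\kappa < 0$ for $\beta < \log 2$), so that route does not cover the full subcritical range either.
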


\begin{proof}
If $\beta$ is larger than $\beta_{h}$ from~\eqref{eq:beta-0}, then---as noted in~\cite[lines following (A.1)]{BCT}---(i) and (ii) in Lemma 6.3 of~\cite{BCT} hold, which is all that is used in establishing item~(a) of Lemma 6.1 therein, giving the bound $\exp(-c_1 \beta n^{d-1})$ on the probability of $\Omega_{\rm{tunnel}}$ for all $\beta\ge \beta_h$. To stitch this together with a similar bound for $\beta\le \beta_h$, notice that for every $\beta<\beta_c$, by~\cite[Theorem 1.2]{DCRT19} there exists $c_2(\beta)$ for which one has exponential tails with rate $c_2$ on connected component sizes, and $\Omega_{\rm{tunnel}}$ necessitates a connected component of size at least $\Omega(n^{d-1})$. Since $\beta_h<\beta_c$, we can maximize $c_2$ on the closed interval $\beta\in [0,\beta_h]$ and stitch this together with $c_1$ to obtain the claimed bound for all~$\beta$. 
\end{proof}

\subsection{Geometry of topologically trivial contours}
Our aim is now to express the distribution on $\Omega_{\rm rest}$ as a union of an \emph{ordered phase} $\Omega_\ord$ and a \emph{disordered phase} $\Omega_\dis$, each of which admit convergent polymer expansions in terms of the topologically trivial contours. Abusing notation, from now on, unless otherwise stated, we will use
the unqualified term ``contour" to mean a topologically trivial contour.

It was observed in~\cite[Lemma 4.3]{BCT} that any such contour $\bgamma \in \Gamma_0(\omega)$ has the property that $\mathbf T_n\setminus \bgamma$ consists of exactly two connected components, one of which is an \emph{exterior} $\Ext(\bgamma)$ and the other an \emph{interior} $\Int(\bgamma)$, with the internal one being the one that is simply connected following \cite[Definition 4.4]{BCT}. A contour $\bgamma$ is called \emph{outermost} or \emph{external} if there is no contour $\bgamma'$ such that $\Int(\bgamma) \subset \Int(\bgamma')$. 

Given an admissible contour collection $\Gamma$, there is a single connected component, denoted $\Ext(\Gamma)$, of $\mathbf T_n\setminus \Gamma$ that is \emph{external} in that it is disjoint from $\Int(\bgamma)$ for every $\bgamma\in \Gamma$. In other words, $\Ext(\Gamma) = \bigcap_{\bgamma} \Ext(\bgamma)$. Because of its first characterization, $\Ext(\Gamma)$ receives a label among $\{\dis,\ord\}$ under any admissible labeling of $\Gamma$. Furthermore notice that, given a collection of admissible contours, knowing the label of $\Ext(\Gamma)$ dictates the labels of all connected components of $\mathbf T_n \setminus\Gamma$. Note that a labeling $\ell$ of the connected components of $\mathbf{T}_n \setminus \Gamma$ can also be interpreted as a labeling of contours, where $\ell_\bgamma := \ell_{\Int(\bgamma)}$. (We warn that this is the opposite labeling convention for contours from that used in~\cite{BCT}.)  

We next partition $\Omega_{\rm rest}$ into two sets defined as follows: 
\begin{align*}
    \Omega_\ord = \{\omega\in \Omega_{\rm rest}: \ell_{\Ext(\Gamma(\omega))} = \ord\} \qquad \mbox{and}\qquad \Omega_\dis = \{\omega\in \Omega_{\rm rest}: \ell_{\Ext(\Gamma(\omega))}= \dis\}\,.
\end{align*}
We can then express the partition function of the random-cluster model on $\bbT_n$ restricted to $\Omega_\ord$ as a sum of products as follows. For any continuum set $\mathbf A$, we say a contour $\bgamma$ is in $\mathbf A$ if $d_\infty(\bgamma,\partial \mathbf A) \ge 1/2$, and define $\mathcal G_\ext(\mathbf A)$ to be the set of all \emph{mutually external contour collections} in $\mathbf A$ (meaning all contour collections in $\mathbf A$ in which all the contours are outermost). If we are in $\Omega_\ord$, then the label of all outermost contours must be $\dis$, and vice versa; accordingly, we can divide the set of mutually external contour collections into $\mathcal G_\ext^\ord(\mathbf{A})$ and $\mathcal G_\ext^\dis(\mathbf{A})$. Then, define
\begin{align}
    Z_\ord(\mathbf A) &  := \sum_{\Gamma \in \mathcal G_\ext^\dis(\mathbf A)}  e^{ - c_\ord |\mathbf A \cap \Ext(\Gamma)|} \prod_{\bgamma \in \Gamma} e^{-\kappa \|\bgamma\|} Z_\dis(\Int(\bgamma))\,; \label{eq:Zord-recurse} \\ 
        Z_\dis(\mathbf A) &  := \sum_{\Gamma \in \mathcal G_\ext^\ord(\mathbf A)}  e^{ - c_\dis |\mathbf A \cap \Ext(\Gamma)|} \prod_{\bgamma \in \Gamma} e^{-\kappa \|\bgamma\|} q Z_\ord(\Int(\bgamma))\,. \label{eq:Zdis-recurse}
\end{align}
In the case $\mathbf A = \mathbf T_n$, these give the partition function restricted to $\Omega_\ord$ and $\Omega_\dis$ respectively: 
\begin{align*}
    Z_\dis(\mathbf T_n) = \sum_{\omega \in \Omega_\dis} W(\omega)\,; \qquad q Z_\ord(\mathbf T_n) = \sum_{\omega \in \Omega_\ord} W(\omega)\,.
\end{align*}

Recursing over equations~\eqref{eq:Zord-recurse}--\eqref{eq:Zdis-recurse}, and letting $\mathcal G^\dis(\mathbf A)$ and $\mathcal G^\ord(\mathbf A)$ be the sets of compatible collections of contours in $\mathbf{A}$ all receiving label $\dis$, or $\ord$ respectively, we end up with 
\begin{align}
    Z_\ord(\mathbf A) = e^{ - c_\ord |\mathbf A|} \sum_{\Gamma \in \mathcal G^\dis(\mathbf{A})}\prod_{\bgamma \in \Gamma} K_\ord (\bgamma)\,, \qquad \mbox{where} \qquad K_\ord(\bgamma) = e^{-\kappa\|\bgamma\|} \frac{Z_\dis(\Int(\bgamma))}{Z_\ord(\Int(\bgamma))}\,; \label{eq:Z-ord-K} \\
    Z_\dis(\mathbf A) = e^{ - c_\dis |\mathbf A|} \sum_{\Gamma\in \mathcal G^\ord(\mathbf{A})} \prod_{\bgamma \in \Gamma} K_\dis (\bgamma)\,, \qquad \mbox{where} \qquad K_\dis(\bgamma) = e^{-\kappa\|\bgamma\|} \frac{qZ_\ord(\Int(\bgamma))}{Z_\dis(\Int(\bgamma))}\,. \label{eq:Z-dis-K}
\end{align}
The representation above, and variations on it, will allow us to perform various manipulations of partition functions to establish exponential tails on contour sizes, etc. The key ingredient to proving exponential tails comes from~\cite[Lemma 6.3, items~(i) and~(ii)]{BCT} which we restate here for the reader's convenience. 

In what follows, for $\iota \in \{\dis,\ord\}$, define the free energies 
\begin{align*}
    f_\iota^n = -\frac{1}{n^d} \log Z_\iota(\mathbf{T}_n) \qquad \mbox{and} \qquad f_\iota = \lim_{n\to\infty} f_\iota^n\,.
\end{align*}

\begin{lemma}[{\cite[Lemma 6.3, items (i) and (ii)]{BCT}}]\label{lem:BCT-input-1}
Fix $d\ge 2$. There exist $q_0(d),c>0$ and functions $f_\ord,f_\dis$ such that the following statements hold for $\iota \in \{\dis,\ord\}$, all $q\ge q_0$ and all $\beta\ge \beta_h$. 
\begin{enumerate}[(i)]
\item Let $f = \min \{f_\ord,f_\dis\}$ and let $a_\iota = f_\iota - f$. If $\bgamma$ is such that $\ell_{\Ext(\bgamma)} = \iota$, and $a_\iota \diam(\bgamma)\le c\beta$, then 
\begin{align*}
    K_\iota(\bgamma) \le e^{ - c\beta \|\bgamma\|}\,.
\end{align*}
\item If $\mathbf{A} \subset \mathbf{T}_n$, then 
\begin{align*}
    Z_\iota(\mathbf{A}) \ge e^{ - (f_\ell + e^{ - c\beta n})|\mathbf A|} e^{ - |\partial \mathbf A|}\,,
\end{align*}
and if $\zeta$ is the opposite label of $\iota$, then
\begin{align*}
    Z_\iota(\mathbf A) \le e^{ - (f-e^{ - c\beta n})|\mathbf A|} e^{ 2|\partial \mathbf{A}|} \max_{\Gamma\in \mathcal G_\ext^\zeta(\mathbf A)} e^{ - \frac{a_\ell}{2} |\mathbf A \cap \Ext(\Gamma)|}\prod_{\bgamma\in \Gamma} e^{-\frac{c}{2}\beta \|\bgamma\|}\,.
\end{align*}
\end{enumerate}
\end{lemma}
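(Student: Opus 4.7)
The plan is to follow the Pirogov--Sinai-type large-$q$ cluster expansion developed in \cite{BCT}, working from the polymer representation \eqref{eq:Z-ord-K}--\eqref{eq:Z-dis-K} in which contours interact only via hard-core constraints. The fundamental quantitative input is the Peierls bound on $K_\ord,K_\dis$; everything else flows from this bound together with standard absolute-convergence estimates for polymer cluster expansions.

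First I would establish part (i). Recall $\kappa = \tfrac12 \log(e^\beta - 1) \ge c\beta$ for $\beta \ge \beta_h = \tfrac1d \log q - 1$ and $q$ large, so the weight $e^{-\kappa\|\bgamma\|}$ already supplies the desired exponential decay; the task is to show that the ratio $Z_\zeta(\Int(\bgamma))/Z_\iota(\Int(\bgamma))$ (with $\zeta$ the opposite of $\iota$) costs at most $e^{O(\beta)\|\bgamma\|}$ under the hypothesis $a_\iota \diam(\bgamma) \le c\beta$. To do this I would iterate \eqref{eq:Z-ord-K}--\eqref{eq:Z-dis-K} once and apply an inductive Peierls bound on smaller contours to run a cluster expansion inside $\Int(\bgamma)$ in both the $\ord$ and $\dis$ polymer models. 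The ratio then factorizes as $e^{-(f_\zeta - f_\iota)|\Int(\bgamma)|}$ times surface corrections of order $e^{O(1)\|\bgamma\|}$ (from cluster-expansion terms touching $\partial \Int(\bgamma)$, which have exponential weight in their size). The bulk free-energy gap is at most $|f_\zeta - f_\iota|\cdot |\Int(\bgamma)| \le a_\iota \diam(\bgamma) \cdot \|\bgamma\|$ by an isoperimetric bound $|\Int(\bgamma)| \le \diam(\bgamma)\cdot \|\bgamma\|$, which by hypothesis is at most $c\beta\|\bgamma\|$. Choosing $c$ small enough so the constant beats the surface correction, we conclude.

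For part (ii), the lower bound follows immediately from taking $\Gamma = \emptyset$ in the polymer representation: this gives $Z_\iota(\mathbf A) \ge e^{-c_\iota |\mathbf A|}$, and converting $c_\iota$ to $f_\iota$ via the (uniformly convergent) cluster expansion for the polymer model of $\iota$-labeled contours introduces the bulk error $e^{-c\beta n}|\mathbf A|$ in the exponent, together with a boundary term $e^{-|\partial \mathbf A|}$ accounting for the truncation of clusters that would otherwise extend outside $\mathbf A$. For the upper bound I would partition the sum over contour collections by their outermost contours $\Gamma \in \mathcal G^\zeta_{\ext}(\mathbf A)$. The exterior contribution on $\mathbf A \cap \Ext(\Gamma)$ is $e^{-c_\iota|\mathbf A \cap \Ext(\Gamma)|}$ times the usual cluster-expansion factor, which upgrades $c_\iota$ to $f_\iota = f + a_\iota$; the full factor $e^{-a_\iota |\mathbf A \cap \Ext(\Gamma)|}$ is retained, and half of it is spent to absorb boundary/truncation errors, leaving $e^{-\frac{a_\iota}{2}|\mathbf A \cap \Ext(\Gamma)|}$. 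The contribution of each outermost $\bgamma \in \Gamma$ yields $e^{-\kappa \|\bgamma\|} Z_{\zeta}(\Int(\bgamma))$ (with an extra factor of $q$ in the $\iota=\dis$ case); inserting the bulk upper bound $Z_\zeta(\Int(\bgamma)) \le e^{-(f - e^{-c\beta n})|\Int(\bgamma)|} e^{O(1)\|\bgamma\|}$ and using part (i) on $\bgamma$ converts $e^{-\kappa\|\bgamma\|}$ to $e^{-\frac{c}{2}\beta \|\bgamma\|}$, with the spare $\frac{c}{2}\beta \|\bgamma\|$ paying for the surface corrections.

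The main obstacle, and the reason the hypothesis $q \ge q_0$ is essential, is establishing uniform convergence of both cluster expansions (for $\ord$ and $\dis$ polymers) throughout the range $\beta \ge \beta_h = \tfrac1d\log q - 1$, which in particular includes the critical point $\beta_c(q,d)$ where the two phases coexist and $a_\ord, a_\dis$ can simultaneously be very small. The Kotecky--Preiss criterion reduces this to the Peierls bound of part (i), so the argument has the inductive structure typical of Pirogov--Sinai theory: one proves (i) and (ii) jointly by induction on $\diam(\bgamma)$ or on the volume $|\mathbf A|$, using the cluster-expansion consequences of (ii) at smaller scales to close the inductive step for (i) at the current scale. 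Ensuring that the constants $c$ and $q_0$ can be chosen uniformly in $\beta \ge \beta_h$ (and in $n$) is precisely where the largeness of $q$ and the explicit lower bound on $\kappa$ enter decisively.
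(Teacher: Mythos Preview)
The paper does not prove this lemma: it is explicitly imported from \cite[Lemma~6.3]{BCT} and merely restated ``for the reader's convenience.'' So there is no proof in the paper to compare against.

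That said, your sketch is a faithful high-level outline of the Pirogov--Sinai argument one finds in \cite{BCT}: the joint induction on contour diameter, the Kotecky--Preiss convergence criterion fed by the Peierls bound of item~(i), the isoperimetric estimate $|\Int(\bgamma)|\le \diam(\bgamma)\cdot\|\bgamma\|$ controlling the bulk free-energy mismatch, and the decomposition over external contours for the upper bound in~(ii) are all the right ingredients. One small inaccuracy: the lower bound in~(ii) is not obtained by literally taking $\Gamma=\emptyset$ (that would yield $e^{-c_\iota|\mathbf A|}$ with the wrong constant $c_\iota\ne f_\iota$); rather, it comes directly from the convergent cluster expansion $\log Z_\iota(\mathbf A)=-f_\iota|\mathbf A|+\text{(boundary terms)}$, where the boundary terms arising from clusters intersecting $\partial\mathbf A$ are controlled by the exponential weight bound and contribute the $e^{-|\partial\mathbf A|}$ factor. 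Your parenthetical already hints at this, but the starting point is the expansion itself, not the empty-contour term.
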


\subsection{Exponential tails on topologically trivial contours}
Given Lemma~\ref{lem:BCT-input-1}, our aim in what follows is to establish that both ordered measures and disordered measures have exponential tails on their contour lengths,  i.e., $\|\bgamma\|$, even after conditioning on a compatible collection of contours. In what follows, let
$$\pi_\ord := \pi_n(\,\cdot\mid \Omega_\ord) \qquad \hbox{\rm and} \qquad \pi_\dis := \pi_n(\,\cdot \mid \Omega_\dis)\,.$$
We formalize the above statement in the following lemma. 

For a configuration $\omega \in \Omega_{\rm rest}$, let $\Gamma_\ext$ be the random collection of external contours of $\omega$, and for a fixed vertex $v$, let $\bgamma_v$ be the element of $\Gamma_\ext$ nesting $v$. We establish exponential tails on \emph{external} contours, conditional on a collection of mutually external contours.

\begin{lemma}\label{lem:conditional-exp-decay-contours}
For all $\beta \ge \beta_c(q,d) - o(n^{ - 1})$ and $q\ge q_0(d)$, we have for all $v\in \bbT_n$ and all fixed collections $\bar \Gamma_\ext\in \mathcal G_\ext^\dis(\mathbf{T}_n)$, none of which contain $v$ in their interior, 
\begin{align*}
    \pi_\ord(\|\bgamma_v\|\ge r \mid  \bar \Gamma_\ext \subset \Gamma_\ext) \le Ce^{ - \beta r/C}\,.
\end{align*}
For all $\beta \le\beta_c(q,d) + o(n^{-1})$, we have for all $\bar \Gamma_\ext\in \mathcal G_\ext^\ord(\mathbf{T}_n)$, none of which contain $v$ in their interior, 
\begin{align*}
    \pi_\dis (\|\bgamma_v\| \ge r \mid \bar \Gamma_\ext \subset \Gamma_\ext) \le Ce^{ - (\beta\vee 1) r/C}\,.
\end{align*} 
\end{lemma}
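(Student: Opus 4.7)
The plan is a Peierls-type argument adapted to the Pirogov--Sinai contour framework developed in this section; I focus on the ordered case, the disordered case being symmetric. Let $\mathbf{E} := \Ext(\bar\Gamma_\ext)$. Using the recursion~\eqref{eq:Zord-recurse} to factor out the contributions of the fixed interiors $Z_\dis(\Int(\bar\bgamma))$ and contour weights $e^{-\kappa\|\bar\bgamma\|}$ common to both numerator and denominator, the conditional probability that a particular topologically trivial contour $\bgamma$ with $v \in \Int(\bgamma) \subset \mathbf{E}$ is external reduces, after cancellation, to
\begin{align*}
\pi_\ord(\bgamma \in \Gamma_\ext \mid \bar\Gamma_\ext \subset \Gamma_\ext) \;=\; K_\ord(\bgamma) \cdot \frac{Z_\ord(\Int(\bgamma)) \, Z_\ord(\mathbf{E} \setminus \Int(\bgamma))}{Z_\ord(\mathbf{E})}.
\end{align*}
For the first factor, Lipschitz continuity of $f_\ord,f_\dis$ in $\beta$ gives $a_\ord = O(|\beta - \beta_c|) = o(n^{-1})$ throughout the window $\beta \ge \beta_c - o(n^{-1})$; combined with $\diam(\bgamma) \le n$ (since $\bgamma$ is topologically trivial), this verifies the hypothesis $a_\ord \diam(\bgamma) \le c\beta$ of Lemma~\ref{lem:BCT-input-1}(i), yielding $K_\ord(\bgamma) \le e^{-c\beta\|\bgamma\|}$.

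For the ratio of partition functions I appeal to the cluster-expansion convergence underlying Lemma~\ref{lem:BCT-input-1}, which in the ordered phase provides the local surface-type estimate $Z_\ord(\Int(\bgamma)) \cdot Z_\ord(\mathbf{E} \setminus \Int(\bgamma)) \le e^{C\|\bgamma\|} Z_\ord(\mathbf{E})$ uniformly in the shape of $\mathbf{E}$: the bulk free energies cancel because $|\Int(\bgamma)| + |\mathbf{E}\setminus\Int(\bgamma)| = |\mathbf{E}|$, the outer boundary $|\partial\mathbf{E}|$ cancels because it is shared between $Z_\ord(\mathbf{E})$ and $Z_\ord(\mathbf{E}\setminus\Int(\bgamma))$, and the only net surface contribution is the newly introduced contour $\bgamma$. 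The proof then concludes with a standard Peierls count: the number of topologically trivial contours of length $k$ nesting a fixed vertex $v$ is at most $k\,C_d^k$, obtained by parameterizing $\bgamma$ via its first intersection with the half-line $\{v + t\mathbf{e}_1 : t > 0\}$ together with a walk of length $k$ from that point. A union bound then gives
\begin{align*}
\pi_\ord(|\bgamma_v| \ge r \mid \bar\Gamma_\ext \subset \Gamma_\ext) \;\le\; \sum_{k \ge r} k\,C_d^k\,e^{-(c\beta - C)k} \;\le\; C\,e^{-\beta r/C},
\end{align*}
where convergence of the geometric series is ensured by the large-$q$ assumption, which forces $\beta \ge \beta_h = \frac{1}{d}\log q - 1$ and thereby $e^{-c\beta} \ll C_d^{-1}$.

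For the disordered case with $\beta \in [\beta_h,\,\beta_c + o(n^{-1})]$ the argument is identical with $K_\dis$ replacing $K_\ord$; the extra factor of $q$ in the definition of $K_\dis$ is absorbed by the minimum contour length $\|\bgamma\| \ge 2d$ together with $\beta \ge \beta_h$. For $\beta < \beta_h$ the cluster expansion is unavailable, but at such high temperatures the exponential decay of connectivities from~\cite{DCRT19} (also invoked in Lemma~\ref{lem:tunnel-doesnt-matter}) directly yields exponential tails of rate $\Omega(1)$ on the size of every connected component, and therefore on every contour; stitching the two regimes gives the claimed $(\beta \vee 1)/C$ rate. The main obstacle is the uniformity in $\bar\Gamma_\ext$: a direct use of the two-sided bounds in Lemma~\ref{lem:BCT-input-1}(ii) would introduce spurious $e^{O(|\partial\mathbf{E}|)}$ factors that can be as large as $e^{O(n^{d-1})}$ and dwarf the $e^{-c\beta\|\bgamma\|}$ gain, and cancelling them requires precisely the sharper local cluster-expansion identity sketched above, in which $|\partial\mathbf{E}|$ appears identically in numerator and denominator.
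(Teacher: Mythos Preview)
Your approach is correct and follows the same Peierls/Pirogov--Sinai strategy as the paper, but you take an unnecessarily roundabout route in the key step. You write the conditional probability as $K_\ord(\bgamma)$ times the ratio $Z_\ord(\Int(\bgamma))Z_\ord(\mathbf{E}\setminus\Int(\bgamma))/Z_\ord(\mathbf{E})$ and then invoke cluster-expansion machinery to bound this ratio by $e^{C\|\bgamma\|}$. The paper instead works directly in the polymer representation~\eqref{eq:part-function-specific-external-contours}: since the conditional partition functions are sums of products $\prod_{\bgamma'\in\Gamma}K_\ord(\bgamma')$ over external collections, the map $\Gamma\mapsto\Gamma\setminus\{\bar\bgamma\}$ is an injection from the numerator into the denominator with weight ratio exactly $K_\ord(\bar\bgamma)$, giving $\pi_\ord(\bar\bgamma\in\Gamma_\ext\mid\bar\Gamma_\ext\subset\Gamma_\ext)\le K_\ord(\bar\bgamma)$ immediately. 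This bypasses entirely the partition-function ratio you worry about (and correctly flag as the main obstacle), and in particular needs no appeal to any surface-type cluster-expansion identity beyond what is stated in Lemma~\ref{lem:BCT-input-1}. In fact your ratio is $\le 1$ by essentially the same injection (a pair of compatible collections in $\Int(\bgamma)$ and $\mathbf{E}\setminus\Int(\bgamma)$ is automatically compatible in $\mathbf{E}$), so your hand-wave to ``the cluster-expansion convergence underlying Lemma~\ref{lem:BCT-input-1}'' is correct but overkill.

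One small gap: in the $\beta<\beta_h$ regime you invoke~\cite{DCRT19} directly, but this gives unconditional exponential decay; you need to explain why the conditioning on $\bar\Gamma_\ext\subset\Gamma_\ext$ does not spoil it. The paper handles this by first revealing the interiors of the contours in $\bar\Gamma_\ext$ (inducing free boundary on $\Ext(\bar\Gamma_\ext)$), then noting that the remaining condition ``$\bar\Gamma_\ext$ are external'' is a decreasing event, so FKG allows it to be dropped.
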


\begin{proof}
We first prove the claim for the ordered phase. Fix any collection $\bar \Gamma_\ext$, and fix a $\bar \bgamma$ compatible with $\bar \Gamma_\ext$ such that $v\in \Int(\bar \bgamma)$. Following the derivation of~\eqref{eq:Z-ord-K}, observe that the weight of configurations in $\mathbf{A}$ with ordered external label, having external contour set containing $\bar \Gamma_\ext$, is 
\begin{align}\label{eq:part-function-specific-external-contours}
	Z_{\ord}( \bar \Gamma_\ext \subset \Gamma_\ext) = e^{ - c_\ord |\mathbf{A}|} \sum_{\substack{\Gamma \in \mathcal G_\ext^\dis(\mathbf{A}) \\ \bar \Gamma_\ext \subset \Gamma}} \prod_{\bgamma \in \Gamma} K_\ord(\bgamma)\,.
\end{align}	
With the choice $\mathbf{A} = \mathbf{T}_n$, we wish to consider the ratio 
\begin{align*}
	\frac{Z_\ord(  (\bar \Gamma_\ext\cup \bar \bgamma) \subset \Gamma_\ext)}{Z_\ord(\bar \Gamma_\ext \subset \Gamma_\ext)} = \pi_\ord(\bar\bgamma \in \Gamma_\ext \mid \bar \Gamma_\ext  \subset \Gamma_\ext)\,.
\end{align*}
This is done by defining a map from contour collections $\Gamma$ with $(\bar \bgamma,\bar \Gamma_\ext) \in \Gamma_\ext$ to contour collections $\Gamma$ having $\bar \Gamma_\ext \subset \Gamma_\ext$, that simply deletes the contour $\bar \bgamma$. 
Evidently, for fixed $\bar \bgamma$ this map is injective (the pre-image can be uniquely recovered by adding back $\bar \bgamma$). As a result, we obtain 
\begin{align*}
	\pi_\ord(\bar\bgamma \in \Gamma_\ext \mid \bar \Gamma_\ext  \subset \Gamma_\ext) \le K_\ord (\bar \bgamma)\,.
\end{align*} 
Now recall item (i) of Lemma~\ref{lem:BCT-input-1} which bounds $K_\ord(\bar \bgamma)$ by $e^{ - c\beta \|\bar \bgamma\|}$ when $f_\ord \le f_\dis$. In fact, when $\beta$ is such that  
\begin{align}\label{eq:condition-on-a-ord}
	f_\ord\le f_\dis + o(n^{-1}) \qquad \mbox{so that} \qquad a_\ord \le o(n^{-1})\,,
\end{align}
then we have the bound 
\begin{align*}
	\pi_\ord(\bar\bgamma \in \Gamma_\ext \mid \bar \Gamma_\ext \subset \Gamma_\ext) \le e^{ - c\beta \|\bar \bgamma\|} \qquad \mbox{for all $\bar \bgamma$ in $\mathbf{T}_n$}\,.
\end{align*}
Here we used the fact that the maximal diameter of $\bar \bgamma$ in~$\mathbf{T}_n$ is~$n$ (the specific notion of diameter in $\mathbf{T}_n$ used here is given by~\cite[Section 5.4]{BCT}). Now, we can sum this bound over all possible $\bar \bgamma$ nesting $v$. Using the fact (see e.g.,~\cite[Item (iii) of Lemma 5.8]{BCT}) that for some constant $K=K(d)$, there are at most $K^r$ many possible $\bar \bgamma$'s nesting a fixed vertex $v$ and having the property $\|\bar \bgamma\| = r$, we have by a union bound 
\begin{align*}
	\pi_\ord(\|\bgamma_v\|\ge r \mid \bar \Gamma_\ext \subset \Gamma_\ext) & \le \sum_{k\ge r} \sum_{\bar \bgamma: \|\bar \bgamma\| = k} \pi_\ord ( \bar \bgamma \in \Gamma_\ext \mid \bar \Gamma_\ext  \subset \Gamma_\ext) \\
	& \le \sum_{k\ge r} K^r e^{ - c \beta r} \le Ce^{ - \beta r/C}\,.
\end{align*}

It remains to show that $\beta \ge \beta_c- o(n^{-1})$ implies the assumption of~\eqref{eq:condition-on-a-ord}. When $\beta \ge \beta_c$, we have $a_\ord = f_\ord - \min\{f_\ord,f_\dis\} = 0$ (essentially by definition of $f_\ord,f_\dis$: see~\cite[item (iii) of Lemma 6.3]{BCT}). 
We claim that $a_\ord$ depends in a uniformly Lipschitz manner on $\beta$. To see this, differentiate the finite-volume free energies $f_\iota^{n}$ for $\iota \in \{\dis,\ord\}$ in~$p$ to get 
\begin{align}\label{eq:derivative-of-free-energy}
  \frac{d}{dp} f_\iota^n&  = - \frac{1}{Z_\iota} \frac{1}{n^d} \sum_{\omega \in \Omega_\iota} \frac{d}{d p} W(\omega) = \pi_\iota \big[(1-p)^{-1} n^{-d}(|E|- p^{-1}|\omega|)\big]\,,
\end{align}
which is non-negative and bounded by $d(1-p)^{-1}$ uniformly in $n$. Taking into account the factor $dp/d\beta = e^{-\beta}$, which is uniformly bounded by~$1$, we find that the derivative $\frac{d}{d\beta} f_\iota^n$ in~$\beta$, evaluated at $\beta_c$,
is at most~$d(1-p)^{-1}$.
The Lipschitz constants of $f_\ord$ and $f_\dis$ are thus bounded by some absolute constant when $\beta \approx \beta_c$, and as a consequence we find that $a_\iota=  o(1/n)$ when $|\beta - \beta_c|  = o(1/n)$. 

The analogous bound for the disordered phase is completely symmetrical for $\beta_h \le \beta \le \beta_c + o(n^{-1})$. To see that it extends to the even higher temperature regime $\beta \in [0,\beta_h]$, we use the results of~\cite{DCRT19}. 
Since $\beta_h<\beta_c$, the role of conditioning on $\Omega_\dis$ is negligible so it suffices to prove the bound under $\pi_n$ itself. In order for there to be a contour $\gamma$ such that $\|\bgamma_v\|\ge r$, there must be a connected component in $\omega$ of diameter at least $r$. Expose the connected components bounded by the contours of $\bar \Gamma$, inducing free boundary conditions on $\Ext(\bar \Gamma)$. We wish to bound the probability of there existing a connected component of size at least $k\ge r$ at distance at most $k$ from $v$ (so that it can nest $v$). The fact that we condition on the contours~$\bar \Gamma$ being external is equivalent to there not being any connected component nesting one of those contours, a decreasing event, so by the FKG inequality we can drop that conditioning and consider the probability of there existing a connected component of size at least $k\ge r$ at distance at most $k$ from $v$ under $\pi_{\Ext(\bar \Gamma)^\zero}$. This decays exponentially in $k$ for every $k$ by a union bound and the exponential tails on connected components from~\cite{DCRT19}.
\end{proof}

Using essentially the same argument, we can also obtain a version of Lemma~\ref{lem:conditional-exp-decay-contours} that applies when the phase is picked by boundary conditions, rather than by conditioning on $\Omega_\ord$ or $\Omega_\dis$. 

\begin{lemma}\label{lem:boudary-condition-exp-decay-contours}
For all $\beta \ge \beta_c(q,d) - o(n^{ - 1})$ and $q\ge q_0(d)$ we have, for all fixed collections $\bar \Gamma_\ext \in \mathcal G_\ext^\dis(\mathbf{\Lambda}_n)$,
\begin{align*}
     \pi_{\Lambda_n^\one} (\|\bgamma_v\|\ge r \mid  \bar \Gamma_\ext  \subset \Gamma_\ext) \le Ce^{ - \beta r/C}\,.
\end{align*}
Similarly, for all $\beta \le\beta_c(q,d) + o(n^{-1})$ we have, for all fixed $\bar \Gamma_\ext \in \mathcal G_\ext^\ord(\mathbf{\Lambda}_n)$\,,
\begin{align*}
     \pi_{\Lambda_n^\zero} (\|\bgamma_v\|\ge r \mid \bar \Gamma_\ext\subset \Gamma_\ext) \le Ce^{ - (\beta\vee 1) r/C}\,.
\end{align*} 
\end{lemma}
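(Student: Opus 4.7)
The plan is to adapt the proof of Lemma~\ref{lem:conditional-exp-decay-contours} almost verbatim. The only structural change is that imposing a wired boundary on $\Lambda_n$, rather than conditioning on $\Omega_\ord$ on the torus, forces the exterior of every admissible contour collection to receive the label $\ord$; consequently every outermost contour has label $\dis$, and the set of admissible external contour collections is precisely $\mathcal G_\ext^\dis(\mathbf\Lambda_n)$. Writing the analogue of~\eqref{eq:part-function-specific-external-contours},
\[
Z_{\Lambda_n^\one}(\bar\Gamma_\ext \subset \Gamma_\ext) = e^{-c_\ord|\mathbf\Lambda_n|}\sum_{\substack{\Gamma \in \mathcal G_\ext^\dis(\mathbf\Lambda_n) \\ \bar\Gamma_\ext \subset \Gamma}}\prod_{\bgamma\in \Gamma} K_\ord(\bgamma)\,,
\]
and using the same injective map that erases a fixed $\bar\bgamma$ nesting $v$ from an admissible collection then yields
\[
\pi_{\Lambda_n^\one}(\bar\bgamma \in \Gamma_\ext \mid \bar\Gamma_\ext \subset \Gamma_\ext) \le K_\ord(\bar\bgamma)\,.
\]

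Lemma~\ref{lem:BCT-input-1}(i) then bounds $K_\ord(\bar\bgamma)\le e^{-c\beta\|\bar\bgamma\|}$ as soon as $a_\ord\diam(\bar\bgamma)\le c\beta$. Since $\diam(\bar\bgamma)\le n$ for contours in $\mathbf\Lambda_n$, it suffices to verify $a_\ord = o(1/n)$ when $\beta\ge \beta_c - o(1/n)$, which is precisely the uniform-Lipschitz argument carried out in the proof of Lemma~\ref{lem:conditional-exp-decay-contours}; the fact that $a_\ord = 0$ for $\beta\ge\beta_c$ and the Lipschitz constants of $f_\ord,f_\dis$ are intrinsic thermodynamic properties of the model on $\mathbb Z^d$, so the argument transfers without change. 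Summing over all $\bar\bgamma$ nesting $v$ with the entropy bound $K^r$ from~\cite[Lemma 5.8(iii)]{BCT} then delivers the claimed tail $Ce^{-\beta r/C}$.

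The free case is completely symmetric for $\beta\in[\beta_h,\beta_c+o(1/n)]$, with the roles of $\ord$ and $\dis$ interchanged and $K_\dis$ in place of $K_\ord$. For the very high temperatures $\beta\in[0,\beta_h]$, I would bypass the contour expansion entirely: conditioning on $\bar\Gamma_\ext$ being external amounts to forbidding certain long-range connections nesting those contours, which is a decreasing event, so by FKG the conditioning can only decrease the probability of the increasing event $\{\|\bgamma_v\|\ge r\}$. The unconditioned bound under $\pi_{\Lambda_n^\zero}$ then reduces to exponential tails on connected-component sizes throughout the subcritical regime $\beta < \beta_c$, supplied uniformly on the compact interval $[0,\beta_h]$ by the sharpness result of~\cite{DCRT19}. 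The main potential obstacle is confirming that the input estimates of Lemma~\ref{lem:BCT-input-1} apply to finite regions with wired or free boundaries rather than only to the torus, but this is essentially a bookkeeping matter: those bounds are statements about individual contour weights $K_\iota$ and ratios $Z_\ord(\Int(\bgamma))/Z_\dis(\Int(\bgamma))$ of interior partition functions, both of which are insensitive to the global topology of the ambient region.
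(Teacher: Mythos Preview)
Your proposal is correct and follows essentially the same approach as the paper. The paper handles the ``bookkeeping matter'' you flag at the end by embedding $\Lambda_n$ into $\bbT_{3n}$ and invoking \cite[Proposition~3.14]{BCHPT-Potts-all-temp} to write $Z_{\Lambda_n^\one} = q\, p^{|E(\Lambda_n)| - d|\Int(\bgamma_\one^n)|} Z_\ord(\Int(\bgamma_\one^n))$ for a specific contour $\bgamma_\one^n$ (and analogously for $Z_{\Lambda_n^\zero}$), then applies~\eqref{eq:part-function-specific-external-contours} with $\mathbf A = \Int(\bgamma_\one^n)$; the extra prefactors cancel in the ratio defining the conditional probability, and the remainder of the argument is exactly as you describe.
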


\begin{proof}
    We can do a similar expansion of the partition functions of the random-cluster model on $\Lambda_n^\one$ and $\Lambda_n^\zero$ as done on the torus to arrive at~\eqref{eq:Zord-recurse}--\eqref{eq:Zdis-recurse}. One indirect way to do so is to naturally embed $\Lambda_n$ in $\bbT_{3n}$, and view the boundary conditions as coming either from the all-wired or all-free configurations on $E(\bbT_{3n}\setminus \Lambda_n)$. For instance, the contour given by $\partial \mathbf{\Lambda}_n$, with (internal) label $\ord$, would necessarily wire all edges along $\partial \Lambda_n$, which is the same as that model with wired boundary conditions up to a uniform proportionality factor. 
    
    This was the argument carried out in~\cite[Proposition 3.14]{BCHPT-Potts-all-temp}. If $Z_{\Lambda_n^\one}$ and $Z_{\Lambda_n^\zero}$ denote the partition functions associated with $\pi_{\Lambda_n^\one}$ and $\pi_{\Lambda_n^\zero}$ respectively, then it was shown there that there exist contours $\bgamma_\zero^{n}$, $\bgamma_\one^{n} \subset \mathbf{T}_{3n}$ such that 
    \begin{align}\label{eq:part-function-bc-conditioning}
        Z_{\Lambda_n^\one} = q  p^{|E(\Lambda_n)| - d|\Int(\bgamma_\one^n)|} Z_\ord(\Int(\bgamma_\one^n))   \qquad \mbox{and}\qquad Z_{\Lambda_n^\zero} = (1-p)^{\|\bgamma_\zero^{n}\|/2} Z_\dis(\Int(\bgamma_\zero^{n}))
         \,.
    \end{align}
    At this point, recall that~\eqref{eq:part-function-specific-external-contours} applied for general~$\mathbf{A}$, so we can take $\mathbf{A} =\Int(\bgamma_\one^n)$. From this starting point, one establishes that, for every fixed $\bar \bgamma$ and fixed $\bar \Gamma_\ext$, 
    \begin{align*}
        \pi_{\Lambda_n^\one}(\bar \bgamma \in \Gamma_\ext \mid \bar \Gamma_\ext \subset \Gamma_\ext ) \le K_\ord(\bar \bgamma)
    \end{align*}
    exactly as in the previous proof, noting that the extra coefficients in~\eqref{eq:part-function-bc-conditioning} cancel out between the numerator and denominator when passing to probabilities. The remainder of the proof is unchanged.
\end{proof}

\subsection{Equivalence of different notions of phases}
We now use Lemma~\ref{lem:conditional-exp-decay-contours} to establish the following equivalence between the restrictions of the measure~$\pi_n$ to $\Omega_\ord$ and to $\widehat \Omega$ when $q\ge q_0$. We emphasize that the reason we presented the results in Section~\ref{sec:mixing-within-a-phase} for $\widehat \Omega$ and $\widecheck \Omega$ is that when $q$ is intermediate, i.e., larger than $q_c(d)$ so there is phase coexistence at $p_c$ but not very large, one does not expect Lemma~\ref{lem:conditional-exp-decay-contours} to hold, but still the coexistence of $\widehat \Omega$ and $\widecheck \Omega$ should hold.

In what follows, we use $A \triangle B$ to denote the symmetric difference between sets~$A$ and~$B$. 

\begin{lemma}\label{lem:equivalence-of-ord-and-hat}
   Fix $d\ge 2$ and $q\ge q_0$. Let $\epsilon<\frac{1}{2}$ be any value used in the definition of $\widehat \Omega$. There exists $c(\epsilon)>0$ such that uniformly over all $\beta \ge \beta_c(q,d) - o(n^{-1})$, 
    \begin{align*}
        \pi_n(\Omega_\ord \triangle \widehat \Omega)\le Ce^{ - c\beta n^{d-1}} \qquad \mbox{and}  \quad \|\pi_\ord - \widehat \pi_n\|_\tv\le Ce^{ - c\beta n^{d-1}}\,.
    \end{align*}
    An analogous statement holds for $\Omega_\dis$ and $\widecheck \Omega$ at all $\beta\le \beta_c(q,d) + o(n^{-1})$.  
\end{lemma}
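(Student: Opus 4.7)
The plan is to first control the symmetric difference $\pi_n(\Omega_\ord \triangle \widehat\Omega)$ via the contour representation, and then pass to the total variation bound through a standard conditioning inequality. I focus on the ordered case; the disordered case is entirely analogous, using Lemma~\ref{lem:conditional-exp-decay-contours} applied to $\pi_\dis$. Decompose $\Omega_\ord \triangle \widehat\Omega$ into $\widehat\Omega\setminus\Omega_\ord \subset (\widehat\Omega\cap\Omega_\dis)\cup\Omega_{\rm tunnel}$ and $\Omega_\ord\setminus\widehat\Omega$. The tunnel contribution is bounded directly by Lemma~\ref{lem:tunnel-doesnt-matter}. For $\widehat\Omega\cap\Omega_\dis$, the key observation is that when $\omega \in \Omega_\dis$ the exterior region $\Ext(\Gamma_\ext)$ is labeled disordered and thus lies in $\mathbf{T}_n \setminus \bomega$, carrying no wired edges; so any connected component of $\omega$ of size $\ge \epsilon n^d$ must lie entirely inside the interior of some outermost contour $\bar\bgamma_\ext$, forcing $|\Int(\bar\bgamma_\ext)| \ge \epsilon n^d$ and hence, by isoperimetry, $\|\bar\bgamma_\ext\| \ge C_\epsilon n^{d-1}$. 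A union bound over $v \in \bbT_n$ applied to Lemma~\ref{lem:conditional-exp-decay-contours} (with the trivial choice $\bar\Gamma_\ext = \emptyset$) then yields $\pi_n(\widehat\Omega \cap \Omega_\dis) \le n^d \cdot Ce^{-c\beta n^{d-1}} \le e^{-c'\beta n^{d-1}}$, absorbing the polynomial prefactor because $\beta \ge \beta_h$ grows with~$q$.

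For $\Omega_\ord\setminus\widehat\Omega$ I work directly under $\pi_\ord$. Now the dual observation applies: $\Ext(\Gamma_\ext)$ is labeled ordered, hence contained in~$\bomega$, and one checks from the definition of $\bomega$ that its lattice vertices $\Ext(\Gamma_\ext)\cap\bbT_n$ form a single connected subgraph of $\omega$, giving $|\cC_1(\omega)| \ge n^d - \sum_{\bar\bgamma\in\Gamma_\ext}|\Int(\bar\bgamma)\cap\bbT_n|$. It therefore suffices to show that $\pi_\ord(\sum_{\bar\bgamma\in\Gamma_\ext}|\Int(\bar\bgamma)| \ge (1-\epsilon)n^d) \le e^{-c\beta n^{d-1}}$ for $\epsilon$ small. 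By isoperimetry this event forces the total external contour length $L := \sum_{\bar\bgamma\in\Gamma_\ext}\|\bar\bgamma\|$ to be at least $c'n^{d-1}$. Using the per-contour weight bound $K_\ord(\bar\bgamma) \le e^{-c\beta\|\bar\bgamma\|}$ from Lemma~\ref{lem:BCT-input-1}(i) inserted into the expansion~\eqref{eq:Z-ord-K}, together with the standard entropy count on admissible external contour collections of total length $L$ (which grows at most like $K^L$ for a dimensional constant $K$), a Peierls-type sum yields $\pi_\ord(L \ge c'n^{d-1}) \le e^{-c''\beta n^{d-1}}$ as soon as $c\beta > \log K$, which holds for $q \ge q_0(d)$.

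Finally, for the total variation bound I would invoke the elementary inequality that $\pi_n(A\triangle B)\le \delta$ together with $\pi_n(A), \pi_n(B) \ge c_0$ imply $\|\pi_n(\cdot\mid A) - \pi_n(\cdot\mid B)\|_\tv \le 3\delta/(2c_0)$. It therefore remains to lower bound $\pi_n(\Omega_\ord)$ and $\pi_n(\widehat\Omega)$ by a quantity dominating $e^{-c\beta n^{d-1}}$. For $\beta\ge\beta_c$ the ordered phase is dominant and $\pi_n(\Omega_\ord) \ge 1/2$; for $\beta \in [\beta_c - o(1/n),\beta_c]$ the Lipschitz dependence of $a_\ord$ on $\beta$ (already used in the proof of Lemma~\ref{lem:conditional-exp-decay-contours}), combined with the free-energy bounds in Lemma~\ref{lem:BCT-input-1}(ii), yields $\pi_n(\Omega_\ord) \ge e^{-Cn^{d-1}}$ for an absolute constant $C=C(d)$, and the same is true for $\pi_n(\widehat\Omega) \ge \pi_n(\Omega_\ord) - e^{-c\beta n^{d-1}}$. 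Since $c\beta$ can be made arbitrarily large by taking $q \ge q_0(d)$, the quotient $\delta/c_0$ remains exponentially small in $n^{d-1}$. The main obstacle I anticipate is the Peierls-type large-deviations estimate on~$L$: the per-contour exponential tails from Lemma~\ref{lem:conditional-exp-decay-contours} do not by themselves sum to a large-deviations statement, and one must return to the contour-weight bound of Lemma~\ref{lem:BCT-input-1}(i) and pair it with a combinatorial entropy count to absorb the number of admissible arrangements into the per-contour exponential decay.
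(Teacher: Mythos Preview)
Your decomposition and the geometric observations (isoperimetry forcing long contours, the exterior region being a single $\omega$-component in $\Omega_\ord$) are the right ones, and the total-variation step via a conditioning inequality is fine. However, there is a genuine gap in your treatment of $\pi_n(\widehat\Omega\cap\Omega_\dis)$. You invoke Lemma~\ref{lem:conditional-exp-decay-contours} for $\pi_\dis$, but that lemma's disordered half is only valid for $\beta\le\beta_c+o(n^{-1})$, whereas the statement you are proving must hold uniformly over \emph{all} $\beta\ge\beta_c-o(n^{-1})$, including $\beta$ far above $\beta_c$. In that regime there is no reason for contours under $\pi_\dis$ to have exponential tails (indeed $a_\dis$ is large, so the hypothesis of Lemma~\ref{lem:BCT-input-1}(i) fails for $K_\dis$). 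The paper closes this gap by a case split: when $\beta$ exceeds $\beta_c$ by more than a constant multiple of~$1/n$, it bounds $\pi_n(\Omega_\dis)$ itself by $e^{-\Omega(\beta n^{d-1})}$ via the free-energy comparison in Lemma~\ref{lem:BCT-input-1}(ii); only in the narrow window $|\beta-\beta_c|\le c/n$ does it use the contour argument you wrote.

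For $\pi_n(\Omega_\ord\setminus\widehat\Omega)$ you and the paper take different routes. The paper observes that this event forces $|\Ext(\Gamma)|\le\epsilon n^d$ and then simply cites \cite[Lemma~6.2]{BCT} for the bound at $\beta\ge\beta_c$, extending to $\beta\ge\beta_c-o(1/n)$ by a Radon--Nikodym comparison between $\pi_{n,p}$ and $\pi_{n,p_c}$. Your Peierls-type route is in spirit a re-derivation of that cited lemma; it can be made to work, but the entropy count you state (``at most $K^L$ admissible external contour collections of total length $L$'') is not literally correct without also accounting for the placement of the contours, so the argument needs either the full polymer large-deviations machinery or a Markov-inequality argument on the expected total interior volume. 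Similarly, for the total-variation step the paper gets away with $\pi_n(\Omega_\ord)=\Omega(1)$ at $\beta\ge\beta_c$ plus the Radon--Nikodym bound (costing only $e^{o(n^{d-1})}$), which is cleaner than your $e^{-Cn^{d-1}}$ lower bound compensated by taking $q$ large.
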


\begin{proof} 
Observe that 
\begin{align*}
    \pi_n(\widehat \Omega \setminus \Omega_\ord) \le \pi_n(\Omega_{\rm tunnel})  + \pi_n(\widehat \Omega \cap \Omega_\dis)\,.
\end{align*}
The first of these two terms is at most $Ce^{ - c\beta n^{d-1}}$ by Lemma~\ref{lem:tunnel-doesnt-matter}. For the second term, if $\beta \ge \beta_c + \frac{c(\delta)}{n}$ is such that $a_\dis > \frac{\delta \beta}{n}$, for a $\delta$ to be chosen later, then a combination of the bounds in item (ii) of Lemma~\ref{lem:BCT-input-1} yields 
\begin{align*}
    \pi_n(\Omega_\dis)\le \frac{Z_\dis(\mathbf{T}_n)}{Z_\ord(\mathbf{T}_n)}\le e^{ - a_\dis |\mathbf{T}_n|} e^{e^{-c\beta n}|\mathbf{T}_n|} \le e^{ - \delta \beta n^{d-1}/2}
\end{align*}
when $n$ is large enough. On the other hand, when $|\beta - \beta_c|\le c(\delta)/n$ we use the fact that $\widehat \Omega \cap \Omega_\dis$ requires that there exists a contour $\bgamma \in \Gamma$ having $|\Int(\bgamma)| \ge \epsilon n^d$. As long as $\delta <1$ in the choice of $\beta$ above, the exponential tails on $\|\bgamma\|$ in Lemma~\ref{lem:conditional-exp-decay-contours} apply because~\eqref{eq:condition-on-a-ord} is satisfied, and we get by a union bound 
\begin{align*}
    \pi_n(\widehat\Omega \cap \Omega_\dis)\le \pi_{\dis}(\exists v: |\Int(\bgamma_v)|\ge \epsilon n^d) \le \pi_\dis(\exists v: \|\bgamma_v\|\ge \epsilon n^{d-1})\le  Ce^{ - \beta \epsilon n^{d-1}/C}\,.
\end{align*}

We now turn to bounding $\pi_n(\Omega_\ord \setminus \widehat \Omega)$. In order for $\Omega_\ord\setminus \widehat \Omega$ to occur, it must be that $|\Ext(\Gamma)|\le \epsilon n^d$ since $\Ext(\Gamma)$ is itself a connected component whenever $\omega \in \Omega_\ord$. This event was shown to have probability at most $e^{ - c\beta n^{d-1}}$ by~\cite[Lemma 6.2]{BCT} uniformly over $\beta \ge \beta_c$. To extend the bound to $|\beta - \beta_c|\le n^{-1}$, we use the fact that the Radon--Nikodym derivative between $\pi_{n,p}$ and $\pi_{n,p_c}$ is bounded as 
\begin{align}\label{eq:radon-nikodym-p}
   \Big\|\frac{\pi_{n,p}}{\pi_{n,p_c}}\vee \frac{\pi_{n,p_c}}{\pi_{n,p}}\Big\|_\infty \le (1+|p-p_c|)^{|E|}\,,
\end{align}
which is clearly $\exp(o(n^{d-1}))$ if $|p-p_c| = o(n^{-1})$, or equivalently if $|\beta  - \beta_c|= o(n^{-1})$. 

The total variation bound on the conditional distributions follows from the fact that, for any event $A$, 
\begin{align*}
    \Big|\frac{\pi_n(A\cap \Omega_\ord)}{\pi_n(\Omega_\ord)} - \frac{\pi_n(A \cap \widehat \Omega)}{\pi_n(\widehat \Omega)}\Big| & \le \frac{1}{\pi_n(\Omega_\ord)}|\pi_n(A\cap \Omega_\ord) - \pi_n(A\cap \widehat\Omega)| \\
    & \qquad + \frac{\pi_n(A \cap \widehat \Omega)}{\pi_n(\widehat \Omega)\pi_n(\Omega_\ord)}|\pi_n(\widehat \Omega) - \pi_n(\Omega_\ord)| \\ 
    & \le \frac{2}{\pi_n(\Omega_\ord)}\cdot \pi_n(\Omega_\ord\triangle \widehat\Omega)\,.
\end{align*}
Using the fact that $\pi_n(\Omega_\ord)$ is uniformly bounded away from zero for all $\beta \ge \beta_c$ (see~\cite[Eq.~(6.7)]{BCT}), and the Radon--Nikodym bound of~\eqref{eq:radon-nikodym-p}, the first term above is $\exp(o(n^{d-1}))$ uniformly over $\beta \ge \beta_c- o(n^{-1})$, and the second symmetric difference term is what we already bounded above. 

The proof for $\Omega_\dis$ and $\widecheck \Omega$ is symmetric for $\beta_h \le \beta\le \beta_c+o(1/n)$. To extend it to $\beta\in [0,\beta_h]$, notice that each of the quantities we would bound in the symmetric proof for the high-temperature regime in the above argument is governed by events that involve the existence of a connected component of size at least $n^{d-1}$, for which we can use the bound of~\cite{DCRT19}. 
\end{proof}

\subsection{Exponential stability of the wired and free phases}
The aim in this section is to establish exponential stability, as defined in Definition~\ref{def:exponentially-stable}, for the wired and free phases in the right parameter regimes, when $q$ is sufficiently large. 

\begin{lemma}\label{lem:exponential-stability-proof}
    Fix $d\ge 2$ and $q\ge q_0(d)$. For any $\epsilon<\frac{1}{2}$, there exists $C_1(\epsilon)$ such that for all $\beta\ge \beta_c - o(n^{-1})$  
    \begin{align*}
        \widehat \pi_n (\partial \widehat \Omega) \le C_1 e^{ - n^{d-1}/C_1}\,.
    \end{align*}
    The symmetric statement holds for $\widecheck \pi_n(\partial \widecheck \Omega)$ for all $\beta \le \beta_c + o(n^{-1})$. 
\end{lemma}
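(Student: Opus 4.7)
The plan is to prove the bound for the wired phase; the free case will follow by a symmetric (and easier) argument sketched at the end. First I would reduce to a bound under $\pi_\ord$: by Lemma~\ref{lem:equivalence-of-ord-and-hat} one has $\|\widehat \pi_n - \pi_\ord\|_\tv \le Ce^{-c\beta n^{d-1}}$ uniformly over $\beta \ge \beta_c - o(n^{-1})$, and since $\pi_n(\Omega_\ord)$ is bounded below by an $n$-independent constant in this regime, Lemma~\ref{lem:tunnel-doesnt-matter} also yields $\pi_\ord(\Omega_{\rm tunnel})\le Ce^{-c\beta n^{d-1}}$. It thus suffices to show $\pi_\ord(\partial \widehat\Omega \cap \Omega_{\rm rest}) \le e^{-c\beta n^{d-1}}$.

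Next I would translate $\partial\widehat\Omega$ into a geometric condition on the contour configuration. If $\omega \in \partial\widehat\Omega$, some edge-flip removes $\omega$ from $\widehat\Omega$; since adding an edge cannot decrease component sizes, this flip must remove a bridge $e$ of $\mathcal C_1(\omega)$ whose deletion splits $\mathcal C_1(\omega)$ into two pieces each of size $<\epsilon n^d$, forcing $|\mathcal C_1(\omega)|\in [\epsilon n^d,\, 2\epsilon n^d)$. Inside $\Omega_\ord\cap \Omega_{\rm rest}$, the exterior $\Ext(\bar\Gamma_\ext)$ is a single connected ordered region within which, conditional on $\bar\Gamma_\ext$, the induced edge distribution is deep into the supercritical Bernoulli bond-percolation regime (as $p\ge 1-e^{-\beta_h}$ far exceeds the percolation threshold of $\mathbb Z^d$ for $q\ge q_0(d)$); hence the largest subcomponent of $\Ext(\bar\Gamma_\ext)$ has size $(1-o(1))|\Ext(\bar\Gamma_\ext)|$ except on an event of probability $e^{-c\beta n^{d-1}}$. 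On the complement of this bad event, $|\mathcal C_1|\in [\epsilon n^d,\,2\epsilon n^d)$ forces $|\Ext(\bar\Gamma_\ext)|\in[\epsilon n^d,\, 3\epsilon n^d]$. For $\epsilon<1/6$ the torus isoperimetric inequality then implies
$\sum_{\bar\bgamma \in \bar\Gamma_\ext} \|\bar\bgamma\| \ge |\partial \Ext(\bar\Gamma_\ext)| \ge c\,\epsilon^{(d-1)/d}\, n^{d-1}$.

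To finish, I would apply a Peierls-type bound. By Lemma~\ref{lem:BCT-input-1}(i), together with the Lipschitz control of $a_\ord$ near $\beta_c$ established in the proof of Lemma~\ref{lem:conditional-exp-decay-contours}, one has $K_\ord(\bar\bgamma)\le e^{-c\beta\|\bar\bgamma\|}$ uniformly for $\beta\ge\beta_c - o(n^{-1})$. Standard rooted-polymer enumeration of admissible external contour collections of total length $L$ gives an entropy factor of $e^{O(L)}$, so that $\pi_\ord\bigl(\sum_i \|\bar\bgamma_i\|\ge L\bigr)\le e^{-c\beta L}$ for $\beta\ge\beta_h$ large enough; plugging in $L=c\,\epsilon^{(d-1)/d}n^{d-1}$ completes the wired case. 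The free case is handled analogously but more directly: a configuration $\omega\in\partial\widecheck\Omega$ must contain a component of size $\ge \epsilon n^d/2$ (since a single edge addition must push a component above $\epsilon n^d$), so in $\Omega_\dis\cap\Omega_{\rm rest}$ some external contour $\bar\bgamma$ nests an interior of size $\ge \epsilon n^d/2$; the isoperimetric inequality then forces $\|\bar\bgamma\|\ge cn^{d-1}$, and a union bound over the vertex being nested, combined with the exponential tail of Lemma~\ref{lem:conditional-exp-decay-contours}, yields the claim. The main obstacle I anticipate is the supercritical percolation step inside $\Ext(\bar\Gamma_\ext)$ needed to pass from the random variable $|\mathcal C_1|$ to the geometric quantity $|\Ext(\bar\Gamma_\ext)|$, which requires a careful analysis of the conditional edge distribution in the ordered region given the contour skeleton; the remaining steps (the two-step reduction, the isoperimetric estimate, and the Peierls bound) are standard adaptations of ingredients already developed in this section.
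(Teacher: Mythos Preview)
Your reduction to $\pi_\ord$ via Lemma~\ref{lem:equivalence-of-ord-and-hat} is fine, and your observation that $\omega\in\partial\widehat\Omega$ forces $|\mathcal C_1(\omega)|\in[\epsilon n^d,2\epsilon n^d)$ is correct. However, two genuine problems follow.

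First, the supercritical percolation step is based on a misreading of the contour representation. The map $\omega\mapsto(\Gamma,\ell)$ is a \emph{bijection} (see the sentence after Definition~5.3): once the full contour collection is fixed, the edge configuration is determined. In particular, every lattice edge lying in the ordered region $\Ext(\Gamma)$ is open, and the vertices in $\Ext(\Gamma)\cap\bbT_n$ form a \emph{single} connected component of $\omega$ of size exactly $|\Ext(\Gamma)|$. There is no residual Bernoulli randomness to analyze. This actually makes the passage from $|\mathcal C_1|$ to $|\Ext(\Gamma)|$ immediate (once you handle the side case that $\mathcal C_1$ sits inside some $\Int(\bgamma)$, which forces $\|\bgamma\|\ge c n^{d-1}$ and is covered by Lemma~\ref{lem:conditional-exp-decay-contours}).

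Second, and more seriously, the Peierls step fails as written. The claim that ``admissible external contour collections of total length $L$'' number at most $e^{O(L)}$ is false: $L$ contours of size $O(1)$ placed at arbitrary locations already give order $\binom{n^d}{L}$ collections, which for $L\asymp n^{d-1}$ is $n^{\Theta(n^{d-1})}$, not $e^{O(L)}$. Rooting does not help, since the collection need not be connected. Hence the bound $\pi_\ord\bigl(\sum_i\|\bar\bgamma_i\|\ge L\bigr)\le e^{-c\beta L}$ does not follow from the ingredients you cite. The event you really need to control is $\{|\Ext(\Gamma)|\le 3\epsilon n^d\}$, i.e.\ that the interiors of external contours cover almost all of the torus; this is a large-deviation statement about the polymer gas that requires more than a naive union bound over contour collections. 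The paper sidesteps this by invoking \cite[Lemma~6.2]{BCT} directly, which gives $\pi_\ord(|\Ext(\Gamma)|\le(\tfrac12+\delta)n^d)\le e^{-c\beta n^{d-1}}$ (extended to $\beta\ge\beta_c-o(n^{-1})$ via the Radon--Nikodym bound~\eqref{eq:radon-nikodym-p}), and then combines this with the single-contour tail from Lemma~\ref{lem:conditional-exp-decay-contours}. Your argument for the free phase is essentially the paper's and is fine.
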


\begin{proof}

    By~\cite[Eq.~(6.7)]{BCT}, we have that $\pi_n(\Omega_\ord)\ge q/(q+1)-o(1)$ when $p \ge p_c(q,d)$. More generally using Lemma~\ref{lem:equivalence-of-ord-and-hat} to compare $\widehat \Omega$ to $\Omega_\ord$, and the Radon--Nikodym bound of~\eqref{eq:radon-nikodym-p}, we have 
    $$\pi_n(\widehat \Omega) \ge \exp( - o(n^{d-1}))\,,$$
    uniformly over all $\beta \ge \beta_c - o(n^{-1})$. Next consider the mass of $\partial \widehat \Omega$ which consists of the set of configurations which are one edge-flip away from $\widehat \Omega^c$. By Lemma~\ref{lem:equivalence-of-ord-and-hat}, and the fact that $\pi_n(\Omega_\ord)\ge \exp(- o(n^{d-1}))$ uniformly over $\beta \ge\beta_c - o(n^{-1})$, it suffices to provide an $\exp(-\Omega(n^{d-1}))$ bound for some $\delta>0$ on
    \begin{align}\label{eq:exp-stability-suffices-to-show}
        \pi_\ord (\partial \widehat \Omega) \le  \pi_\ord\big(|\Ext(\Gamma)|\le (\tfrac{1}{2} + \delta)n^d\big) + \pi_\ord\big(\exists v: \|\bgamma_v\| \ge \delta n^{d-1}\big)\,.    \end{align}
    The inequality here can be seen because in order for a configuration to be one spin-flip away from having no component of size larger than $\epsilon n^d$, for any $\epsilon<1/2$, either its external component is smaller than size $1/2 + \delta$, or if not then its external component is its largest component, and it has a cut-edge $e\in \omega$ such that if the state of $e$ is changed to closed, then it splits off a component of size at least $\delta n^{d}$. In order for this latter situation to occur, there must be a contour having size at least $\delta n^{d-1}$. 

    As long as $q$ is sufficiently large, the first of the two terms of~\eqref{eq:exp-stability-suffices-to-show} has probability at most $e^{ - c\beta n^{d-1}}$ by~\cite[Lemma 6.2]{BCT} uniformly over $\beta \ge \beta_c$, extended to $\beta\ge \beta_c - o(n^{-1})$ by the Radon--Nikodym bound we have been using repeatedly above. The second term in~\eqref{eq:exp-stability-suffices-to-show} is bounded by $Ce^{ - c\beta \delta n^{d-1}}$ per Lemma~\ref{lem:conditional-exp-decay-contours} and a union bound over $v$. Altogether, that yields the claimed bound. 
    
    The proof for $\widecheck \pi_n(\partial \widecheck\Omega)$ is similar for $\beta_h\le \beta \le \beta_c + o(1/n)$. It effectively reduces to bounding the probability of there existing \emph{two} connected components whose sizes together sum up to $\epsilon n^d$, which in particular requires there to be at least one contour under $\pi_\dis$ of size $\epsilon n^{d-1}/2$. To extend it to $\beta\in [0,\beta_h]$, one can work directly with the event on connected components in $\omega$, and apply the exponential tails on component sizes from~\cite{DCRT19}. 
\end{proof}

\subsection{The unstable phase near criticality}
In order for us to be able to faithfully sample from the random-cluster distribution near the critical point in finite volumes without knowing the exact value of the critical point, it will be important to have an understanding of how close to the critical point the unstable phase becomes negligible. The following lemma shows that the $o(n^{-1})$ window of temperatures around~$p_c$ through which the above estimates apply is sufficient to cover the window of temperatures at which that phase contributes more than exponentially small mass.   

\begin{lemma}\label{lem:unstable-phase-negligible}
    Fix $d\ge 2$ and $q\ge q_0(d)$ and any $\delta>0$. For every $p\ge p_c + \Omega(n^{-(1+\delta)})$, 
    \begin{align*}
       \pi_n(\Omega_\dis) \le \exp( - c n^{d-1-\delta})\,.
    \end{align*}
    The same bound holds for $\pi_n(\Omega_\ord)$ as soon as $p\le p_c- \Omega(n^{-(1+\delta)})$. 
\end{lemma}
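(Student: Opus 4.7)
I treat the case $p\ge p_c+\Omega(n^{-(1+\delta)})$; the symmetric case is obtained by swapping the roles of the ordered and disordered phases and using $\beta\le \beta_c+o(1/n)$ in place of $\beta\ge \beta_c-o(1/n)$. The plan is to bound the partition-function ratio $Z_\dis(\mathbf{T}_n)/Z_\ord(\mathbf{T}_n)$ using the polymer-model machinery of Section~\ref{sec:q-cluster-expansion} and the fact that the excess free energy $a_\dis$ grows linearly in $\beta-\beta_c$ near $\beta_c$ at large $q$.

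\textbf{Step 1 (reduction).} Since $qZ_\ord(\mathbf{T}_n)=\sum_{\omega\in\Omega_\ord}W(\omega)$ and $Z_\dis(\mathbf{T}_n)=\sum_{\omega\in\Omega_\dis}W(\omega)$, we simply have
\[
\pi_n(\Omega_\dis)\le \frac{Z_\dis(\mathbf{T}_n)}{qZ_\ord(\mathbf{T}_n)}.
\]
The contribution of $\Omega_{\rm tunnel}$ can only help and is anyway $\exp(-c\beta n^{d-1})$ by Lemma~\ref{lem:tunnel-doesnt-matter}.

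\textbf{Step 2 (apply Lemma~\ref{lem:BCT-input-1}(ii)).} For $\mathbf A=\mathbf T_n$ we have $|\partial\mathbf A|=0$. Since $\beta\ge \beta_c$ gives $f=f_\ord$, the lower bound on $Z_\ord$ and the upper bound on $Z_\dis$ combine to
\[
\frac{Z_\dis(\mathbf{T}_n)}{Z_\ord(\mathbf{T}_n)}\le e^{2e^{-c\beta n}n^d}\cdot \max_{\Gamma\in\mathcal{G}_\ext^\ord(\mathbf{T}_n)}\exp\Big(-\tfrac{a_\dis}{2}|\mathbf{T}_n\cap\Ext(\Gamma)|-\tfrac{c\beta}{2}\sum_{\bgamma\in\Gamma}\|\bgamma\|\Big).
\]
The prefactor is $e^{o(n^{d-1-\delta})}$ because $\beta n\to\infty$ (recall $\beta\ge \beta_h=\tfrac1d\log q-1$), so it is harmless.

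\textbf{Step 3 (the key input: $a_\dis\gtrsim \beta-\beta_c$).} This is the main obstacle. One shows that for $q\ge q_0(d)$, the infinite-volume free energies $f_\ord,f_\dis$ are convex and analytic on each side of $\beta_c$, and their derivatives at $\beta_c$ differ by a positive constant (the latent heat, controlled by the jump in edge-density between the two phases, which is bounded below at large $q$). This is a standard consequence of Pirogov--Sinai theory and is implicit in~\cite[Lemma 6.3]{BCT}. It yields the existence of $c_\star(q,d)>0$ such that for all $\beta\in[\beta_c,\beta_c+1]$,
\[
a_\dis=f_\dis-f_\ord\ge c_\star(\beta-\beta_c)\ge c_\star'\, n^{-(1+\delta)}
\]
under our hypothesis. (The comparison between $f_\iota$ and $f_\iota^n$ is $\exp(-c\beta n)$, again negligible.)

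\textbf{Step 4 (optimizing the contour functional).} For any $\Gamma\in\mathcal G_\ext^\ord(\mathbf T_n)$, write $V(\Gamma)=\sum_{\bgamma\in\Gamma}|\Int(\bgamma)|$, so $|\mathbf T_n\cap\Ext(\Gamma)|\ge n^d-V(\Gamma)-\sum\|\bgamma\|$. By discrete isoperimetry in $\mathbb Z^d$, $\|\bgamma\|\ge c|\Int(\bgamma)|^{(d-1)/d}$, and subadditivity of $x\mapsto x^{(d-1)/d}$ shows a single contour is the worst case, so
\[
\tfrac{a_\dis}{2}|\mathbf T_n\cap\Ext(\Gamma)|+\tfrac{c\beta}{2}\sum_{\bgamma}\|\bgamma\|\;\ge\;\tfrac{a_\dis}{2}(n^d-V)+\tfrac{c\beta}{2}V^{(d-1)/d},\qquad V\in[0,n^d].
\]
The interior critical point of this function in $V$ is at $V^\star\asymp(\beta/a_\dis)^d\asymp n^{(1+\delta)d}$, which exceeds $n^d$; hence the minimum over $V\in[0,n^d]$ is attained at an endpoint. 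The endpoint values are $a_\dis n^d/2\ge \tfrac{c_\star'}{2} n^{d-1-\delta}$ at $V=0$ and $\gtrsim \beta n^{d-1}\gg n^{d-1-\delta}$ at $V=n^d$. Thus the exponent is bounded below by $c'n^{d-1-\delta}$.

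\textbf{Step 5 (conclusion).} Combining Steps 2--4 and absorbing the $e^{o(n^{d-1-\delta})}$ prefactor and the factor $q^{-1}$, we obtain $\pi_n(\Omega_\dis)\le \exp(-cn^{d-1-\delta})$ for $n$ large, as claimed. The symmetric statement for $\pi_n(\Omega_\ord)$ at $p\le p_c-\Omega(n^{-(1+\delta)})$ follows by swapping the roles of $\ord$ and $\dis$ (now $f=f_\dis$, and $a_\ord\ge c(\beta_c-\beta)$ on $[\beta_c-1,\beta_c]$, which is again uniform in the relevant window since $p$ and $\beta$ are uniformly comparable there).
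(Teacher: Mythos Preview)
Your proposal is correct and follows essentially the same route as the paper: bound $\pi_n(\Omega_\dis)\le Z_\dis(\mathbf T_n)/Z_\ord(\mathbf T_n)$, invoke Lemma~\ref{lem:BCT-input-1}(ii), and feed in a linear lower bound on $a_\dis$ near $\beta_c$. The differences are only in where the work is done explicitly. For Step~3 the paper actually computes $\tfrac{d}{dp}(f_\dis^L-f_\ord^L)$ and bounds it below at $\beta_c$ by exhibiting an edge-density gap between $\pi_\ord$ and $\pi_\dis$ (using \cite[Lemma~6.2]{BCT} and Lemma~\ref{lem:conditional-exp-decay-contours}), whereas you cite this as a standard Pirogov--Sinai fact; conversely, for Step~4 the paper simply quotes the already-optimized inequality $Z_\dis/Z_\ord\le\exp(-\tfrac14\min\{a_\dis n^d,\,c\beta n^{d-1}\})$ from \cite[Eq.~(30)]{BCHPT-Potts-all-temp}, whereas you reproduce that optimization by hand via isoperimetry and concavity. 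One small point you leave implicit: the paper only establishes the bound at the single value $\beta=\beta_c+n^{-(1+\delta)}$ and then extends to all larger $\beta$ by monotonicity of $\pi_{n,p}(\Omega_\dis)$ in $p$ (since $\Omega_\dis$ is decreasing), rather than relying on the linear bound on $a_\dis$ holding over the whole half-line.
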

\begin{proof}
    Recall from~\cite{BCT} that $\beta_c$ is the unique point at which $f_\ord = f_\dis$, and when $\beta>\beta_c$ necessarily $f_\dis >f_\ord$. Our aim is to understand the asymptotic dependence of $a_\dis = f_\dis - f_\ord$ on $(\beta - \beta_c)$ as $\beta \to\beta_c$ from above. Recall that $f_\iota = \lim_{L\to\infty} f_\iota^L$ and by the line above (A.8) in~\cite{BCT}, $|f_\iota - f_\iota^L|\le 2e^{  - c\beta L}$. Letting $a_\dis^L = f_\dis^L - f_\ord^L$, it suffices therefore to control (uniformly in $L$) the dependence of $a_\dis^L$ on $(\beta - \beta_c)$. 

    For this purpose, differentiating as in~\eqref{eq:derivative-of-free-energy}, 
    \begin{align*}
        \frac{d}{dp}a_\dis^L = \frac{d}{dp}(f_\dis^L - f_\ord^L) = p^{-1}(1-p)^{-1} \big(\pi_{L,\ord}[L^{-d}|\omega|] - \pi_{L,\dis}[L^{-d}|\omega|]\big)\,,
    \end{align*}
    where $\pi_{L,\ord}$ and $\pi_{L,\dis}$ denote those measures on $\bbT_L$. By~\cite[Lemma 6.2]{BCT}, when $q$ is sufficiently large, for all $\beta \ge \beta_c$, under $\pi_\ord$, $|E(\Ext(\Gamma))| \le |\omega|$ is itself at least $3|E|/4$ with probability $1-o_L(1)$. On the other hand, while $\beta - \beta_c = o(L^{-1})$, $\pi_\dis$ satisfies the exponential tail bound of Lemma~\ref{lem:conditional-exp-decay-contours}, and as a consequence, when $q$ is sufficiently large, the expected fraction of open edges, which is bounded by $d$ times the expected number of vertices having non-empty $\bgamma_v$, is at most $|E|/4$ (say). Combined with the factors $p^{-1}(1-p)^{-1}$ and $\frac{dp}{d\beta}$ we find that, uniformly over $L$, at $\beta_c$, $\frac{d}{d\beta} a_\dis^L$ is at least some constant $c_0(p,q,d)>0$. 
    
    Continuity of $a_\dis^L$ in $\beta$ (as noted in~\cite[Appendix A.4]{BCT}) then implies that, for $\beta>\beta_c$, 
    \begin{align*}
        a_\dis^L(\beta) \ge a_\dis^L(\beta_c) + c_0(\beta-\beta_c) + O((\beta-\beta_c)^2)\,.
    \end{align*}
    These being free energies corresponding to absolutely convergent power series, when we pass to the $L\to \infty$ limit, we get the same inequality for $a_\dis(\beta)$. 
    Now consider a sequence of $\beta$ such that $\beta= \beta_c + n^{-(1+\delta)}$ for a fixed $\delta>0$. At such~$\beta$, there is some $c_1$ such that $a_\dis(\beta) \ge c_1 n^{-1+\delta}$. Fix this $\beta$ and consider the quantity
    \begin{align*}
       \pi_n(\Omega_\dis)\le  \frac{Z_\dis(\mathbf{T}_n)}{Z}\le \frac{Z_\dis(\mathbf{T}_n)}{Z_\ord(\mathbf{T}_n)}\,.
    \end{align*}
    By combining the two parts of item (ii) in Lemma~\ref{lem:BCT-input-1}, as observed in~\cite[Eq.~(30)]{BCHPT-Potts-all-temp}, this satisfies 
    \begin{align*}
        \frac{Z_\dis(\mathbf{T}_n)}{Z_\ord(\mathbf{T}_n)} \le \exp( - \tfrac{1}{4} \min\{a_\dis n^d, c\beta n^{d-1}\})\,.
    \end{align*}
    Applying our bound on $a_\dis$ then implies the desired bound at $\beta = \beta_c +n^{-(1+\delta)}$. The fact that the bound holds for all larger $\beta$ follows from monotonicity of the random-cluster model and the fact that $\Omega_\dis$ is a decreasing event. 
    The bound on $\pi_n(\Omega_\ord)$ follows symmetrically. 
\end{proof}

\section{Spatial mixing properties of the random-cluster model}\label{sec:equilibrium-estimates}
Recall the notions of WSM, SSM, and WSM within a phase from Definitions~\ref{def:WSM}--\ref{def:ssm-per-bc} and~\ref{def:WSM-within-a-phase}. In this section, we will establish reductions from these notions of spatial mixing to certain exponential decay properties on contours with label $\ord$ or~$\dis$. These reductions will make it easier to ascertain the off-critical parameter regimes in which WSM and SSM hold, and will also allow us to establish the WSM within a phase property at large~$q$, leveraging the arguments of Section~\ref{sec:q-cluster-expansion}. In particular, this section also includes proofs of two of our main results on mixing
times, Corollaries~\ref{cor:intro1} and~\ref{cor:intro2} from the introduction.

Since we are interested in all temperature regimes, we need to consider both $\ord$ and $\dis$ connectivities simultaneously. Recall the continuum embedding of a random-cluster configuration $\omega$, denoted $\bomega$ in Section~\ref{sec:q-cluster-expansion}. For any two (continuum) points $x,y$, write 
\begin{align*}
    x\stackrel{\ord}\longleftrightarrow y & := \{\omega: x,y \mbox{ are in the same connected component of }\bomega\}\,; \\ 
    x\stackrel{\dis}\longleftrightarrow y & := \{\omega: x,y\mbox{ are in the same connected component of }\mathbf{\Lambda}\setminus\bomega\}\,.
\end{align*}
Notice that, by construction of $\bomega$, for vertices $v,w\in \mathbb Z^d$, $v\stackrel{\ord}\longleftrightarrow w$ is equivalent to $v$ and $w$ belonging to the same connected component of $\omega$ itself. When $d=2$, for dual vertices $v,w\in (\mathbb Z + \frac 12)^2$, $v\stackrel{\dis}\longleftrightarrow w$ is the same as the usual notion of dual connectivity. In higher dimensions, if one defines $F_*(\omega)$ as the set of unit $(d-1)$-cells that are dual to closed edges,\footnote{A $k$-cell is a $k$-dimensional unit hypercube. A $d-1$-cell is dual to an edge $e$ if they share a barycenter and $e$ is normal to the cell.} then $v\stackrel{\dis}\longleftrightarrow w$ indicates that $v,w\in (\mathbb Z+\frac 12)^d$ are connected through $F_*(\omega)$ (where two dual $(d-1)$-cells are adjacent if they are incident on one another).

\subsection{Weak spatial mixing}\label{subsec:wsm}
The first reduction of this section is from WSM to bulk exponential decay of $\ord$ and $\dis$ connectivities (i.e., connectivities at distance $\Theta(n)$ from the boundary). The proof is quite standard, and such coupling arguments are common in the literature when working with $\ord$-connectivities, but we include it since the later proofs for SSM and WSM within a phase will build on similar ideas.  

\begin{lemma}\label{lem:exp-decay-to-WSM}
    Suppose $(p,q,d)$ are such that one of the following hold: 
    \begin{align}\label{eq:exp-decay-wsm}
    \pi_{\Lambda_m^\one}(\partial \Lambda_{m/2}\stackrel{\ord}\longleftrightarrow \partial \Lambda_m)\le Ce^{ - m/C}\qquad \mbox{or}\qquad \pi_{\Lambda_m^\zero}(\partial \Lambda_{m/2}\stackrel{\dis}\longleftrightarrow\partial \Lambda_m) \le Ce^{ - m/C}\,.
    \end{align}
    Then the random-cluster model satisfies WSM. 
\end{lemma}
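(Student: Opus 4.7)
The plan is to exhibit, for each of the two hypotheses, a coupling of $\omega^\one \sim \pi_{\Lambda_r^\one}$ and $\omega^\zero \sim \pi_{\Lambda_r^\zero}$ under which $\omega^\one(\Lambda_{r/2}) = \omega^\zero(\Lambda_{r/2})$ off an event of probability at most $Ce^{-r/C}$. The coupling definition of total-variation distance then immediately yields~\eqref{eq:WSM}.

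Under the first hypothesis, I would start from the monotone grand coupling of Section~\ref{subsec:prelim-grand-coupling}, run from $(\one,\zero)$ to stationarity, so that $\omega^\one \ge \omega^\zero$ almost surely. Let $\mathcal{C}$ be the ord-cluster of $\partial\Lambda_r$ in $\omega^\one$ and set $\mathcal{D} := \Lambda_r \setminus \mathcal{C}$. The key step is to expose $\mathcal{C}$ by a BFS on $\omega^\one$, revealing $\omega^\one$ on every edge incident to $\mathcal{C}$; the edges along $\partial \mathcal{C}$ are all closed in $\omega^\one$, hence also closed in $\omega^\zero$ by monotonicity of the coupling. By the domain Markov property applied to $\pi_{\Lambda_r^\one}$, the wiring at $\partial \Lambda_r$ only identifies vertices within $\mathcal{C}$ and is blocked from $\mathcal{D}$ by the closed $\partial \mathcal{C}$-edges, so the conditional law of $\omega^\one|_{\mathcal{D}}$ is $\pi_{\mathcal{D}^\zero}$. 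A parallel argument, using that $\pi_{\Lambda_r^\zero}$ has no wiring at $\partial \Lambda_r$ to begin with and that the same $\partial \mathcal{C}$-edges are closed in $\omega^\zero$, shows that the conditional law of $\omega^\zero|_{\mathcal{D}}$ is also $\pi_{\mathcal{D}^\zero}$. Completing the coupling by the identity on $\mathcal{D}$ then gives $\omega^\one(\Lambda_{r/2}) = \omega^\zero(\Lambda_{r/2})$ on $\{\mathcal{C} \cap \Lambda_{r/2} = \emptyset\}$, which coincides with the complement of $\{\partial\Lambda_{r/2} \stackrel{\ord}\longleftrightarrow \partial\Lambda_r\text{ in }\omega^\one\}$ and thus has probability at least $1 - Ce^{-r/C}$ by hypothesis.

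For the second hypothesis the plan is to run the dual exploration: expose instead the dis-cluster $\tilde{\mathcal{C}}$ of $\partial\Lambda_r$ in $\omega^\zero$, whose outer boundary consists of open $\omega^\zero$-edges (hence also open in $\omega^\one$ by monotonicity). The event that $\tilde{\mathcal{C}}$ reaches $\Lambda_{r/2}$ is controlled directly by the dis-connectivity hypothesis. The main obstacle, and where more care is required than in the first case, is that the boundary condition induced on $\partial \tilde{\mathcal{D}}$ after exposure is a nontrivial wiring whose identifications can depend on $\omega^\one$-open edges inside $\tilde{\mathcal{C}}$ that are $\omega^\zero$-closed; in contrast to the ord-case, the two induced boundary conditions are a priori \emph{not} identical, one only has the ordering $\xi^\zero \le \xi^\one$. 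I expect to resolve this by first revealing $\omega^\zero$ on all edges incident to $\tilde{\mathcal{C}}$ and then applying a further monotone coupling inside $\tilde{\mathcal{D}}$ at the boundary conditions $\xi^\zero \le \xi^\one$, absorbing any residual disagreement into the same exponential tail by one more application of the hypothesis at a slightly smaller scale (affecting only the constant $C$). In $d = 2$, an alternative and cleaner route is to invoke planar duality, which maps the dis-connectivity hypothesis under $\pi_{\Lambda_m^\zero}$ to the ord-connectivity hypothesis under the dual random-cluster model at $(p^*, q)$, reducing the second case directly to the first.
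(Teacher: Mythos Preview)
Your treatment of the first hypothesis is correct and matches the paper's proof exactly: explore the ord-cluster of $\partial\Lambda_r$ in $\omega^\one$, use monotonicity to push the closed cut-edges down to $\omega^\zero$, and couple via the identity on the remaining region, which carries free boundary for both samples.

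For the second hypothesis you have correctly identified the dual exploration, but you are over-thinking the obstacle. The paper's resolution is not to iterate the coupling but simply to observe that the dual exploration already induces the \emph{wired} boundary condition on each connected component $\Int(\bgamma)$ of the unexplored region: the revealed open edges that halt the $F_*$-exploration along a single connected contour $\bgamma$ wire all of $\partial\Int(\bgamma)$ into one block. Since wired is the maximal boundary condition, any additional open edges that $\omega^\one$ may have in the explored region (or the original $\one$-wiring on $\partial\Lambda_r$) can only add wirings \emph{between} the boundaries of different interiors; but the interiors are graph-disconnected, so such cross-interior wirings shift $|\mathrm{Comp}(\omega;\xi)|$ by an $\omega$-independent constant and leave the conditional measure unchanged. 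Hence $\xi^\zero$ and $\xi^\one$ induce the \emph{same} random-cluster law on the interior, and one finishes exactly as in the ord case.

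Your proposed workaround (a) has a genuine gap: the second hypothesis concerns dis-connectivity under \emph{free} boundary, whereas after one round of dual exploration the interior carries a wired-type boundary, so there is no obvious way to ``apply the hypothesis at a slightly smaller scale'' there. Your workaround (b) via planar duality is fine but only covers $d=2$. The missing one-line observation---that the dual exploration yields wired boundary on each interior, hence maximal, hence equal for both samples---is what closes the argument in all dimensions.
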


\begin{proof}
We start with the implication from the first assumption. We construct a coupling between $\omega^\zero \sim \pi_{\Lambda_n^\zero}$ and $\omega^\one \sim \pi_{\Lambda_n^\one}$ such that they agree on $\Lambda_{n/2}$ on the complement of the event 
    \begin{align}\label{eq:connectivity-event-wsm}
        \big\{\omega^\one: \partial \Lambda_{n/2} \stackrel{\ord}\longleftrightarrow \partial  \Lambda_n\big\}\,.
    \end{align}
    The coupling goes as follows. Expose the connected component of $\partial \Lambda_n$ (i.e., the union of the connected components of all vertices in $\partial \Lambda_n$) under $\omega^\one$ and call that set $D$. Notice that this is exactly the set of vertices in $\Ext(\Gamma_\ext(\omega^\one))$, and $\partial \mathbf{D}= \Gamma_\ext(\omega^\one)$. 
    Expose also the values of $\omega^\zero(D)$ under the grand monotone coupling with $\omega^\one$. 
    Having revealed $D$, we have also revealed that all edges of 
    \begin{align*}
        \partial_\out D = \{e = (v,w): v\in D, w\notin D\}
    \end{align*}
    are closed under $\omega^\one$, which is what gave rise to the contour collection $\partial \mathbf{D}$, all of whose interior labels are $\dis$. 
    Therefore, we can sample the remainder of the configuration of $\omega^\one$ by drawing 
    \begin{align*}
        \omega^\one(E(\Lambda_n\setminus D)) \sim \pi_{\Lambda_n^\one}(\,\cdot \mid \omega^\one(D \cup \partial_\out D) )\,,
    \end{align*}
    which by the domain Markov property is exactly the random-cluster distribution on $\Int(\partial \mathbf{D})$ with free boundary conditions. By monotonicity of the coupling of $(\omega^\zero,\omega^\one)$, all edges of $\omega^\zero(\partial_\out D)$ are also closed, so by the domain Markov property the marginal $$\pi_{\Lambda_n^\zero}(\,\cdot \mid \omega^\zero(D\cup \partial_\out D))$$
    is also just a random-cluster distribution on $\Int(\partial \mathbf{D})$ with free boundary conditions. Therefore, we can conclude the coupling simply by setting $\omega^\zero(E(\Lambda_n\setminus D)) = \omega^\one(E(\Lambda_n\setminus D))$. 
    
    Thus $\omega^\one(E(\Lambda_{n/2})) = \omega^\zero(E(\Lambda_{n/2}))$ as long as $\Lambda_{n/2} \cap D = \emptyset$. As such, 
    \begin{align*}
        \|\pi_{\Lambda_m^\zero}(\omega(\Lambda_{m/2})\in \cdot) - \pi_{\Lambda_m^\one}(\omega(\Lambda_{m/2}\in \cdot)\|_\tv \le \pi_{\Lambda_m^\one}(\Lambda_{n/2} \cap D \ne \emptyset) = \pi_{\Lambda_m^\one}(\partial \Lambda_{m/2}\stackrel{\ord}\longleftrightarrow\partial \Lambda_m)\,,
    \end{align*}
    which is exactly the first quantity bounded in~\eqref{eq:exp-decay-wsm}.
    
    The proof assuming the bound on the disordered connectivity events in~\eqref{eq:exp-decay-wsm} proceeds analogously, by revealing $\Ext(\Gamma_\ext(\omega^\zero))$ using adjacency of the dual $(d-1)$-cells we called $F_*(\omega)$. Then the set of edges whose states are exposed to reveal the connected component of $\partial \Lambda_n$ in $F_*(\omega)$ induce a wired boundary on the interiors of $\Gamma_\ext(\omega^\zero)$ as expected, and the domain Markov property can be applied on that set. 
\end{proof}

\begin{proof}[\textbf{\emph{Proof of Corollary~\ref{cor:intro1}}}]
    By Theorem~\ref{thm:intro1} it suffices to establish that WSM holds in all the parameter regimes claimed in Corollary~\ref{cor:intro1}.  
    In~\cite{DCRT19} it was shown that the random-cluster model has exponential decay of connectivities of the form of the first bound in~\eqref{eq:exp-decay-wsm} whenever $d\ge 2$, $q\ge 1$ and $p<p_c(q,d)$. Together with Lemma~\ref{lem:exp-decay-to-WSM} that implies WSM at all $p<p_c(q,d)$. The fact that the Ising random-cluster model ($q=2$) satisfies WSM when $d\ge 2$ at all $p>p_c(q,d)$ was directly shown in~\cite{DCGR20}. Finally, in order to see that the random-cluster model satisfies WSM whenever $p$ is sufficiently large, one notices that the second item in~\eqref{eq:exp-decay-wsm} holds at $p$ sufficiently large by comparison of the set of closed edges to a sub-critical percolation process on the graph induced by the dual notion of adjacency we considered for closed edges.  
\end{proof}

\begin{remark}It seems plausible that the methods of~\cite{BCT} as outlined in Section~\ref{sec:q-cluster-expansion} are strong enough to show WSM (and in turn SSM on $\Lambda_n^\one$) for all $p>p_c(q,d)$ when $q\ge q_0(d)$ is sufficiently large. The challenge is to establish the second bound of~\eqref{lem:exp-decay-to-WSM} at slightly super-critical~$p$. Under $\pi_{\Lambda_m^\zero}$ and $p>p_c(q,d)$ there will necessarily be a long contour confining almost all of the volume of $\Lambda_m$, so the question reduces to understanding the geometry of this separating surface, which presumably will remain within an $O(1)$ distance from $\partial\Lambda_m$ with exponential tails on oscillations away from that. 
\end{remark}

\subsection{Strong spatial mixing}
The next reduction of this section is between strong spatial mixing and connectivity decay that holds uniformly over vertices in the box. This reduction will only be applicable for a special class of boundary conditions which are called \emph{side-homogeneous}. 

\begin{definition}\label{def:side-homogeneous-bc}
Side-homogeneous boundary conditions are those in which a subset of the $2d$ sides of $\partial \Lambda_m$ are selected and all vertices in those sides are in one single element of the boundary partition; all vertices in the other sides of $\partial \Lambda_m$ are singletons. In particular, the all-wired and all-free boundary conditions are side-homogeneous.  
\end{definition}

\begin{lemma}\label{lem:exp-decay-to-SSM}
    Fix any side-homogeneous boundary condition $\eta$. Suppose $(p,q,d)$ are such that either: 
   \begin{align}\label{eq:exp-decay-ssm}
   \max_{v}\pi_{\Lambda_m^\eta \cap B_{r,v}^\one}(\partial B_{r/2,v}\stackrel{\ord}\longleftrightarrow \partial B_{r,v})\le Ce^{ - m/C}\quad \mbox{or}\quad \max_{v} \pi_{\Lambda_m^\eta\cap B_{r,v}^\zero}(\partial B_{r/2,v}\stackrel{\dis}\longleftrightarrow\partial B_{r,v}) \le Ce^{ - m/C}\,.
   \end{align}
    Then the random-cluster model on satisfies SSM on $\Lambda_n^\eta$. 
\end{lemma}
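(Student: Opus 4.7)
The plan is to mirror the revealing-and-coupling strategy from Lemma~\ref{lem:exp-decay-to-WSM}, now adapted to the finite-volume setting with the fixed side-homogeneous $\eta$ in the background. Fix $e\in E(\Lambda_n)$ and $r\le n/2$, and couple $\omega^\one \sim \pi_{B_{r,e}^\one}$ with $\omega^\zero \sim \pi_{B_{r,e}^\zero}$ via the grand monotone coupling, so that $\omega^\zero \le \omega^\one$ pointwise. Under $\omega^\one$, I perform a breadth-first exploration of the open-cluster of the wired segment $\partial B_{r,e}\setminus \partial\Lambda_n$, recording the reached vertex set $D$. This exploration also reveals that every edge in $\partial_\out D \cap E(B_{r,e})$ is closed under $\omega^\one$, and by monotonicity of the coupling those edges are closed under $\omega^\zero$ as well.

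On the event $\{D \cap B_{r/2,e} = \emptyset\}$ I claim $\omega^\one$ and $\omega^\zero$ can be coupled to agree on $E(B_{r/2,e})$. Indeed, by the domain Markov property, conditional on this reveal both $\omega^\one|_{E(B_{r,e}\setminus D)}$ and $\omega^\zero|_{E(B_{r,e}\setminus D)}$ are distributed as the random-cluster measure on $B_{r,e}\setminus D$ with boundary condition consisting of (i)~the restriction of $\eta$ to $\partial\Lambda_n \cap \partial(B_{r,e}\setminus D)$, and (ii)~free boundary elsewhere: on the revealed edges of $\partial_\out D$ (which are closed, inducing no identification) and on any portion of $\partial B_{r,e}\setminus \partial\Lambda_n$ not captured by $D$ (which was already free under~$\pi_{B_{r,e}^\zero}$, and whose wired identifications under~$\pi_{B_{r,e}^\one}$ have been fully absorbed into $D$). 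The two descriptions coincide; this is precisely where side-homogeneity of $\eta$ enters, since the only difference between $B_{r,e}^\one$ and $B_{r,e}^\zero$ lives on $\partial B_{r,e}\setminus \partial\Lambda_n$, while the inherited $\eta$-identifications on $\partial\Lambda_n\cap B_{r,e}$ are identical under both boundary conditions. Consequently, disagreement on $B_{r/2,e}$ forces $D \cap B_{r/2,e} \ne \emptyset$, and since $D$ is a single open-cluster starting from $\partial B_{r,e}\setminus \partial\Lambda_n$, this in turn forces the connection $\partial B_{r,e} \stackrel{\ord}\longleftrightarrow \partial B_{r/2,e}$ under $\omega^\one$, whose probability is controlled by the first hypothesis.

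For the second case, I would run the dual version of the same exploration, starting from $\omega^\zero$ and revealing the maximal dual-connected closed region $D^\ast$ rooted at the free segment $\partial B_{r,e}\setminus \partial\Lambda_n$ (using the $(d-1)$-cell adjacency that defines $\stackrel{\dis}\longleftrightarrow$). Monotonicity in the reverse direction now forces the interior-boundary edges of $D^\ast$ to be open in $\omega^\one$, and the domain Markov property again equates the conditional laws of $\omega^\one$ and $\omega^\zero$ on the complement of $D^\ast$. The probability that $D^\ast$ reaches $\partial B_{r/2,e}$ is then controlled by the second hypothesis. The main technical point, in either direction, is verifying that the conditional boundary conditions on the un-revealed region genuinely coincide under $\omega^\one$ and $\omega^\zero$ after the reveal; this is the precise reason the hypothesis is restricted to \emph{side-homogeneous} $\eta$, as a general $\eta$ could entangle the wired segment of $\partial B_{r,e}$ with vertices strictly outside $D$, breaking the symmetry the coupling relies on.
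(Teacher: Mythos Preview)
Your proposal is correct and follows essentially the same approach as the paper: reveal the $\omega^\one$-cluster $D$ of $\partial B_{r,e}\setminus\partial\Lambda_n$, use monotonicity to transfer the closed edges of $\partial_\out D$ to $\omega^\zero$, and then couple identically on $B_{r,e}\setminus D$ via domain Markov; the dual argument handles the second hypothesis. One remark on precision: your first explanation of where side-homogeneity enters (``the inherited $\eta$-identifications on $\partial\Lambda_n\cap B_{r,e}$ are identical under both boundary conditions'') is true but not the crux---that identity holds for \emph{any} $\eta$. The real issue, which the paper spells out and your closing sentence hints at, is that the conditional boundary on $B_{r,e}\setminus D$ could in principle depend on the \emph{revealed values} $\omega^\one(D)$ (via paths $D\to\partial\Lambda_n\cap D\to\eta\to\partial\Lambda_n\cap(B_{r,e}\setminus D)$), which differ from $\omega^\zero(D)$; side-homogeneity kills this because wired sides already identify everything and free sides identify nothing.
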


\begin{proof} Fix any $v\in \Lambda_n$, and for simplicity let $B_{r}^\one$ denote $B_{r,v}$ with boundary conditions that are induced by $\eta$ on $\partial \Lambda_n$ and $\one$ on $\partial B_{r,v}\setminus \partial \Lambda_n$. The proof goes by constructing a coupling between $\omega^\zero \sim \pi_{B_r^\zero}$ and $\omega^\one\sim \pi_{B_r^\one}$ such that they agree on $B_{r/2}$ on the complement of the event 
    \begin{align}\label{eq:connectivity-event-ssm}
        \{\omega^\one: \partial B_{r/2}\stackrel{\ord}\longleftrightarrow \partial B_r\setminus \partial \Lambda_n\}. 
    \end{align}
    This coupling goes as follows. Expose  the connected component of $\partial B_{r}\setminus \partial \Lambda_n$ under $\omega^\one$ and call that set~$D$.  To relate this to the continuum construction of Section~\ref{sec:q-cluster-expansion}, this is the set of vertices in the connected component of $\mathbf{B}_r\setminus \Gamma(\omega^\one)$ that is incident to $\partial B_r \setminus \partial \Lambda_n$. Since the set~$D$ is bounded by a contour in $\Gamma(\omega^\one)$, the set of edges $\partial_\out D$ are revealed to be closed under $\omega^\one$, and together with $\partial \Lambda_n$ they enclose the region $B_r\setminus D$. Also, under the grand monotone coupling, expose the configuration $\omega^\zero(D\cup \partial_\out D)$, and note that by monotonicity all edges of $\omega^\zero(\partial_\out D)$ will also be closed. 
    
    We next sample $$\omega^\one(E(B_r \setminus D))\sim \pi_{B_r^\one}(\cdot \mid \omega^\one(D\cup \partial_\out D))\,,$$ 
    which is exactly the random-cluster distribution on $E(B_r\setminus D)$ with  boundary conditions induced by free on $\partial_\out D$ and $\eta$ on the portions of $\partial E(B_r\setminus D)$ that intersect $\Lambda_n$. In particular we claim that, because $\eta$ is side-homogeneous, the induced boundary conditions on $B_r\setminus D$ are independent of the values of $\omega^\one(D)$. This follows because the vertices of $\partial \Lambda_n$ that are incident to $B_r\setminus (D\cup \partial_\out D)$ are either on a wired side of~$\eta$, in which case they are wired regardless, or they are on a free side of~$\eta$, in which case they cannot be wired up to any other vertex through $\omega^\one(D)$ because they are not incident to~$D$. 
    
    The coupling can now be concluded by setting $\omega^\zero(E(B_r\setminus D)= \omega^\one(E(B_r\setminus D))$. Under the above coupling, $\omega^\one(B_{r/2}) = \omega^\zero(B_{r/2})$ as long as $B_{r/2}\cap D\ne \emptyset$. This means that for any $v$, 
    \begin{align*}
        \|\pi_{B_{r}^\one}(\omega(B_{r/2}\in \cdot) -\pi_{B_{r}^\zero}(\omega(B_{r/2}\in \cdot) \|_\tv \le \pi_{B_r^\one}(B_{r/2}\cap D \ne \emptyset) = \pi_{B_r^\one}(\partial B_{r/2}\stackrel{ord}\longleftrightarrow \partial B_r)\,.
    \end{align*}
    Maximizing both sides over $v$, this reduces SSM on $\Lambda_n^\eta$ to 
    the first inequality in~\eqref{eq:exp-decay-ssm}.
    
    The proof assuming the bound on the disordered connectivity events in~\eqref{eq:exp-decay-ssm} proceeds analogously, by revealing the $\dis$ components using the dual notion of connectivity in $F_*(\omega)$. All vertices adjacent to the revealed set $D$ will be wired to one another, and the same claim about the induced boundary conditions on $E(B_r\setminus D)$ being independent of the edge values on $D$ holds. 
\end{proof}

\begin{proof}[\textbf{\emph{Proof of Corollary~\ref{cor:intro2}}}]
    Let us start with item (i). By Theorem~\ref{thm:intro1} it suffices to establish that SSM holds on $\Lambda_n^\zero$ whenever $p<p_c(q,d)$. We wish to boost exponential decay of connectivities in the bulk, in the form of the first bound in~\eqref{eq:exp-decay-wsm}, to uniform exponential decay of connectivities in the presence of \emph{free} boundary conditions in the form of the first bound in~\eqref{eq:exp-decay-ssm} with $\eta = \zero$. In order to see this, note that by monotonicity
\begin{align*}
\pi_{\Lambda_m^\zero \cap B_{r,v}^\one}(\partial B_{r/2,v}\stackrel{\ord}\longleftrightarrow \partial B_{r,v}) \le \pi_{\Lambda_r^\one}(\partial \Lambda_{r/2}\stackrel{\ord}\longleftrightarrow \partial \Lambda_r)
\end{align*}
for every $v\in \Lambda_m$. For any $d\ge 2$, $q\ge 1$, the latter decays exponentially whenever $p<p_c(q,d)$ by~\cite{DCRT19}, so SSM on $\Lambda_n^\zero$ holds whenever $p<p_c(q,d)$. 
    
    We turn now to item (ii). At $p$ sufficiently large, we can compare the set of closed edges to a sub-critical percolation process on the graph induced by the dual notion of adjacency we considered for closed edges. Since this is a comparison to an independent percolation process, it is independent of the boundary condition~$\eta$, and therefore implies that the second bound in~\eqref{eq:exp-decay-ssm} holds, which by Lemma~\ref{lem:exp-decay-to-SSM} in turn implies SSM for $\Lambda_n^\eta$ for all side-homogeneous $\eta$. 
\end{proof}

\begin{remark}\label{rem:cylinders}
For $0\le k \le d$, define the free cylinder $(\mathbb Z/n\mathbb Z)^k \times \Lambda_n^\zero$ for $\Lambda_n$ in $d-k$ dimensions, and similarly the wired cylinder $(\mathbb Z/n\mathbb Z)^k \times \Lambda_n^\one$. As hinted at in Remark~\ref{rem:intro-cylinders}, 
the same argument used to prove Corollary~\ref{cor:intro2} could also be applied to deduce fast mixing on the free cylinder when $p<p_c(q,d)$ and the wired cylinder when $p>p_c(q,d)$. To see this generalization, in the above proof of Corollary~\ref{cor:intro2}, one simply takes the balls $B_{r,v}$ and $B_{r/2,v}$ to be on the cylinder (i.e., wrapping around in its first $k$ coordinates): with this change, the rest of the argument goes the same way.
\end{remark}

\subsection{Weak spatial mixing within a phase}
We conclude the section by using the results of Section~\ref{sec:q-cluster-expansion} to establish WSM within a phase for the random-cluster model at large $q$. This also essentially goes via a reduction to exponential tails on connectivities, but in this case under $\pi_\ord$ and $\pi_\dis$ as established in Lemma~\ref{lem:conditional-exp-decay-contours}. The fact that these are conditional measures introduces some complications that are handled with a somewhat more involved coupling argument.

\begin{lemma}\label{lem:large-q-wsm-within-phase}
    Fix $d\ge 2$ and suppose $q\ge q_0(d)$. There is a constant $C(q,d)>0$ such that the random-cluster model on $\bbT_n$ satisfies 
    \begin{itemize}
        \item WSM \emph{within the free phase} with constant $C$ uniformly over $p\le p_c(q,d)+o(n^{-1})$; and
        \item WSM \emph{within the wired phase} with constant $C$ uniformly over $p\ge p_c(q,d) - o(n^{-1})$.
    \end{itemize}
\end{lemma}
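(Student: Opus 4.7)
The two assertions are symmetric under $\ord \leftrightarrow \dis$ together with the corresponding halves of Lemmas~\ref{lem:conditional-exp-decay-contours}, \ref{lem:boudary-condition-exp-decay-contours}, and~\ref{lem:equivalence-of-ord-and-hat}, so I sketch only the wired case at $p \ge p_c - o(n^{-1})$. The plan is to construct a coupling of $\omega_1 \sim \pi_{\Lambda_r^\one}$ and $\omega_2 \sim \widehat\pi_n$ on $\bbT_n$ such that $\omega_1|_{\Lambda_{r/2}} = \omega_2|_{\Lambda_{r/2}}$ outside a bad event of probability $C e^{-r/C}$. First I would replace $\widehat\pi_n$ by $\pi_\ord$ using Lemma~\ref{lem:equivalence-of-ord-and-hat}, which incurs an error of $C e^{-c\beta n^{d-1}}$; since $d \ge 2$ and we only need to check the bound for $r \le n/2$, this is absorbed into the target rate. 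It then suffices to couple $\pi_{\Lambda_r^\one}$ with $\pi_\ord$.

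Next I define a good event $\cE_i$ (for $i=1,2$) on which the ordered region of $\omega_i$ contains a connected subset of the annular buffer $\Lambda_{(3/4)r}\setminus\Lambda_{(5/8)r}$ that topologically encircles $\Lambda_{r/2}$. For $\cE_i$ to fail, some contour of $\omega_i$ must span the buffer (hence have length at least $r/16$) or some external contour must nest a vertex $v \in \Lambda_{(3/4)r}$ with $\|\bgamma_v\|$ of order~$r$. Both possibilities are controlled by the exponential tails in Lemma~\ref{lem:conditional-exp-decay-contours} (applied to $\omega_2$) and Lemma~\ref{lem:boudary-condition-exp-decay-contours} (applied to $\omega_1$), uniformly for $|p - p_c| = o(n^{-1})$. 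A union bound over at most $r^d$ vertices in the buffer gives $\mathbb P(\cE_i^c) \le C r^d e^{-\beta r/C} \le C e^{-r/C}$, the polynomial prefactor being absorbed because $\beta \ge \beta_h(q,d)$ is of order $\log q$ at large $q$.

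On $\cE_1 \cap \cE_2$, I reveal both $\omega_1$ and $\omega_2$ restricted to $E(\Lambda_n) \setminus E(\Lambda_{r/2})$. The encircling ordered shell places all of $\partial \Lambda_{r/2}$ into a single connected component of the exterior open edge-set, so the induced boundary condition on $\Lambda_{r/2}$ is wired in both cases. Crucially, for $\omega_2 \sim \pi_\ord$ the topological decoupling provided by the shell means that changes in $\omega_2|_{\Lambda_{r/2}}$ cannot alter the external contour structure of the whole torus, so the conditioning event $\Omega_\ord$ is determined already by the revealed exterior. By the domain Markov property, both conditional laws on $\Lambda_{r/2}$ therefore coincide with $\pi_{\Lambda_{r/2}^\one}$, and I couple them identically.

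The main obstacle is the topological claim underlying $\mathbb P(\cE_i^c) \le C e^{-r/C}$: one must verify that any failure of the ordered encircling shell forces at least one contour of diameter comparable to the buffer width, ruling out a priori scenarios in which a fine network of small contours cooperatively obstructs the shell. This reduces, via the continuum contour representation of Section~\ref{sec:q-cluster-expansion}, to the fact that any disordered connected set crossing the buffer is bounded by closed contour surfaces which themselves cross the buffer. A secondary but important point is the uniformity of Lemmas~\ref{lem:conditional-exp-decay-contours} and~\ref{lem:boudary-condition-exp-decay-contours} in $p$ throughout the window $|p-p_c| = o(n^{-1})$, which is exactly what allows WSM within a phase to extend to a one-sided neighborhood of $p_c$ rather than only at criticality itself.
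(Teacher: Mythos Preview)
Your overall strategy—reduce to $\pi_\ord$ via Lemma~\ref{lem:equivalence-of-ord-and-hat}, then couple using an ordered shell in an annular buffer—is natural, but the coupling step has a genuine gap. You claim that on $\cE_1\cap\cE_2$, after revealing both configurations outside $\Lambda_{r/2}$, ``the encircling ordered shell places all of $\partial\Lambda_{r/2}$ into a single connected component of the exterior open edge-set, so the induced boundary condition on $\Lambda_{r/2}$ is wired in both cases.'' This is not correct: your shell lives in $\Lambda_{(3/4)r}\setminus\Lambda_{(5/8)r}$, which is separated from $\partial\Lambda_{r/2}$ by a strip of width $r/8$ on which you have no control. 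The configuration in that strip can leave distinct vertices of $\partial\Lambda_{r/2}$ in distinct open clusters, so the induced boundary partition on $\Lambda_{r/2}$ is neither all-wired nor the same for $\omega_1$ and $\omega_2$. Since you have not coupled the two exteriors to one another during the revealing stage, the two conditional laws on $\Lambda_{r/2}$ need not agree, and the identity coupling in your final step is unavailable.

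The paper's remedy is not to reveal a fixed annulus but to reveal a \emph{random} region determined by one of the two configurations, using monotonicity to drag the other along. Concretely (in the free case; the wired case is symmetric), one first samples $\omega_\dis$ on $E(\Lambda_r)^c$ and checks an event $\mathcal E_\dis$ guaranteeing that the $\Omega_\dis$ conditioning is already determined regardless of what happens on $E(\Lambda_r)$ (this is your decoupling observation, made precise in Lemma~\ref{lem:conditioning-dropped}). On $\mathcal E_\dis$, the conditional law of $\omega_\dis$ on $\Lambda_r$ is an \emph{unconditioned} random-cluster measure, hence stochastically dominates $\pi_{\Lambda_r^\zero}$. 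One then reveals from $\partial\Lambda_r$ inward the open cluster $D$ of $\partial\Lambda_r$ in $\omega_\dis$; under the grand monotone coupling the corresponding edges of $\omega^\zero$ are revealed simultaneously and $\omega^\zero(\partial_\out D)$ is forced to be closed as well. Now both measures induce the \emph{same} free boundary on the \emph{same} random domain $\Lambda_r\setminus D$, and one sets them equal there. The bad event is that $D$ reaches $\Lambda_{r/2}$, which is exactly the $\ord$-connectivity event controlled by Lemma~\ref{lem:conditional-exp-decay-contours}. The essential difference from your approach is that the revealed region is adapted to the configuration and the monotone coupling guarantees matching boundary conditions on its complement; neither is ensured if you reveal a fixed annulus under each measure separately.
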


We will prove the result for the free phase, the proof for the wired phase being analogous. The proof goes by constructing a coupling between $\omega^\zero \sim  \pi_{\Lambda_r^\zero}$ and $\widecheck \omega \sim \widecheck \pi_n$ such that they agree on $\Lambda_{r/2}$ except with probability $Ce^{ - r/C}$. (Here, we are thinking of $\Lambda_r$ as being embedded in $\bbT_n$ by identifying the vertices of $\bbT_n$ with those of~$\Lambda_n$.) By virtue of Lemma~\ref{lem:equivalence-of-ord-and-hat}, it actually suffices to construct the coupling with $\omega_\dis \sim \pi_\dis$ instead. Let $\mathcal E_\dis$ be the set of configurations $\omega$ on $E(\Lambda_r)^c = E(\bbT_n)\setminus E(\Lambda_r)$ such that $\omega\cup E(\Lambda_r)\in \Omega_\dis$ (i.e., even if all edges in $E(\Lambda_r)$ are present, it still forms a configuration in $\Omega_\dis$). This event is important because it allows us to drop the conditioning in $\pi_\dis$ and apply properties of the unconditional random-cluster measure such as  monotonicity and domain Markov. 

\begin{lemma}\label{lem:conditioning-dropped}
    Suppose $\eta \in \mathcal E_\dis$. Then 
    \begin{align*}
        \pi_\dis(\omega(E(\Lambda_r))\in \cdot \mid \omega(E(\Lambda_r)^c) = \eta) = \pi_n(\omega(E(\Lambda_r))\in \cdot \mid \omega(E(\Lambda_r)^c) = \eta) = \pi_{\Lambda_r^\eta}(\omega\in \cdot)\,.
    \end{align*}
\end{lemma}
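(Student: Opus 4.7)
The second equality is an immediate consequence of the domain Markov property of the random-cluster model applied to the region $\Lambda_r$ with boundary edges induced by the configuration $\eta$ outside. The substantive content of the lemma is therefore the first equality, which reduces to the following claim: if $\eta \in \mathcal{E}_\dis$, then for every $\omega' \subseteq E(\Lambda_r)$ one has $\eta \cup \omega' \in \Omega_\dis$. Once this claim is established, the event $\{\omega \in \Omega_\dis\}$ becomes redundant once we condition on $\omega(E(\Lambda_r)^c) = \eta$, and the first equality follows.

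My plan for the claim is to exploit the monotonicity of the continuum embedding $\omega \mapsto \bomega$. Write $\omega_{\max} := \eta \cup E(\Lambda_r)$ for the maximal extension, which by definition of $\mathcal{E}_\dis$ lies in $\Omega_\dis \subseteq \Omega_{\rm rest}$. Since every edge of $\Lambda_r$ is open in $\omega_{\max}$, the set $\bomega_{\max}$ contains a thickening of $\mathbf{\Lambda}_r$, so $\Gamma(\omega_{\max})$ is supported in a small neighborhood of $\mathbf{T}_n \setminus \mathbf{\Lambda}_r$; these contours are topologically trivial, and $\Ext(\Gamma(\omega_{\max}))$ carries the label $\dis$. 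Now set $\omega := \eta \cup \omega'$. From $\omega \subseteq \omega_{\max}$ we have $\bomega \subseteq \bomega_{\max}$, and inspection of the definition of $\bomega$ shows that $\Gamma(\omega)$ agrees with $\Gamma(\omega_{\max})$ outside a neighborhood of $\mathbf{\Lambda}_r$ and differs from it only by additional contours confined to $\mathbf{\Lambda}_r$ (created by the closed edges in $E(\Lambda_r)\setminus \omega'$). Since $\mathbf{\Lambda}_r$ is simply connected, these new contours are topologically trivial, so $\omega \in \Omega_{\rm rest}$. Finally, the region $\Ext(\Gamma(\omega_{\max}))$ is disjoint from $\mathbf{\Lambda}_r$ and sits in $\mathbf{T}_n \setminus \bomega_{\max} \subseteq \mathbf{T}_n \setminus \bomega$, so it lies in a single component of $\mathbf{T}_n \setminus \Gamma(\omega)$ that is not nested inside any contour of $\omega$. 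This component is therefore $\Ext(\Gamma(\omega))$, and it carries the label $\dis$, giving $\omega \in \Omega_\dis$.

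The main (modest) obstacle is a careful geometric verification that the $1/4$-thickening used in defining $\bomega$ does not cause the contours of $\omega$ and $\omega_{\max}$ to disagree outside a small neighborhood of $\mathbf{\Lambda}_r$, and that the new contours created inside $\Lambda_r$ cannot escape to perturb the outer contours or nest $\Ext(\Gamma(\omega_{\max}))$. These checks are routine given the explicit definition of $\bomega$ and the fact that $\Lambda_r$ is a rectangular subset of $\bbT_n$.
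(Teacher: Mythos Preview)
Your proof is correct and follows essentially the same approach as the paper: both identify the second equality as the domain Markov property and reduce the first equality to showing that $\eta \cup \omega' \in \Omega_\dis$ for every $\omega' \subseteq E(\Lambda_r)$. The paper dispatches this in one line by asserting that $\Omega_\dis$ is a decreasing event (so $\eta \cup E(\Lambda_r)\in\Omega_\dis$ forces $\eta\cup\omega'\in\Omega_\dis$), whereas your geometric argument about contours confined to $\mathbf{\Lambda}_r$ is effectively an explicit verification of that monotonicity in the case at hand.
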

\begin{proof}
    The second inequality is simply the domain Markov property. Now consider any event $A$ on configurations on $E(\Lambda_r)$. Then, 
    \begin{align*}
        \pi_\dis(A \mid \omega(E(\Lambda_r)^c) = \eta)   = \frac{\pi_n(\omega(E(\Lambda_r))\in A, \omega(E(\Lambda_r)^c) = \eta, \Omega_\dis)}{\pi_n(\omega(E(\Lambda_r)^c) = \eta, \Omega_\dis)}\,.
    \end{align*}
    But the configuration $\eta$ (together with empty on $E(\Lambda_r)$) is in $\Omega_\dis$, and $\Omega_\dis$ is a decreasing set, so $\eta \cup \omega(E(\Lambda_r))$ is always in $\Omega_\dis$. As such, the intersection with $\Omega_\dis$ can be dropped from both the numerator and denominator, giving the desired first equality.  
\end{proof}

The main coupling of this section goes as follows. We describe it in the free phase, with the construction for the ordered phase being symmetrical.

\begin{definition}\label{def:wsm-within-phase-coupling}
The coupling $\mathbb P$ of $(\omega^\zero,\omega_\dis)$ is constructed as follows: 
\begin{enumerate}
    \item Sample the configuration $\omega_\dis(E(\Lambda_r)^c) \sim \pi_{\dis}$ where $E(\Lambda_r)^c = E(\bbT_n)\setminus E(\Lambda_r)$
    \item If $\omega_\dis(E(\Lambda_r)^c)\notin \mathcal E_\dis$, sample independently 
    $$\omega^\zero \sim \pi_{\Lambda_r^\zero}\qquad \mbox{and}\qquad \omega_\dis \sim \pi_\dis(\cdot \mid \omega_\dis(E(\Lambda_r)^c))\,.$$
    \item If $\omega_\dis(E(\Lambda_r)^c) \in \mathcal E_\dis$, then independently from $\partial \Lambda_r$ inwards, reveal all connected components of $\partial \Lambda_r$ under 
    $$\omega_\dis \sim \pi_n(\omega(E(\Lambda_r))\in \cdot\mid \omega(E(\Lambda_r)^c))\,,$$
    and call that set $D$. These will evidently be the interiors of all contours labeled $\ord$ that intersect $\partial \Lambda_r$. Thus we will also have revealed their boundary $\partial_\out D$ to be all-closed in $\omega_\dis$, and the boundary conditions induced by the set of revealed edges on $\Lambda_r\setminus D$ will be all-free. Under the monotone coupling of $\omega^\zero$ to $\omega_\dis$ we can also reveal all edges of $\omega^\zero(D\cup \partial_\out D)$ and they will also be all closed on $\partial_\out D$, therefore also inducing free boundary conditions on $\Lambda_r \setminus D$. 
    \item Sample $\omega_\dis(E(\Lambda_r\setminus D))$ according to the random-cluster model on that set with free boundary conditions, and set $\omega^\zero(E(\Lambda_r\setminus D)) = \omega_\dis(E(\Lambda_r\setminus D))$. 
\end{enumerate}
\end{definition}

The validity of this coupling can be seen quite easily with the key point being the use of Lemma~\ref{lem:conditioning-dropped} to ensure that in items (3)--(4), $\omega_\dis$ stochastically dominates $\omega^\zero$ and satisfies the domain Markov property. 

\begin{proof}[\textbf{\emph{Proof of Lemma~\ref{lem:large-q-wsm-within-phase}}}]
    Let us begin with the proof for the free phase. As mentioned, by Lemma~\ref{lem:equivalence-of-ord-and-hat}, it suffices for us to establish that 
    \begin{align*}
        \|\pi_{\Lambda_{r}^\zero}(\omega(\Lambda_{r/2})\in \cdot) - \pi_\dis(\omega(\Lambda_{r/2})\in \cdot)\|_\tv \le Ce^{ - r/C}\,.
    \end{align*}
    Under the coupling of Definition~\ref{def:wsm-within-phase-coupling}, it is evident that 
    \begin{align}\label{eq:2-terms-wsm-within-phase-coupling}
        \mathbb P(\omega^\zero(\Lambda_{r/2})\ne \omega_\dis(\Lambda_{r/2}))\le \pi_\dis(\omega(E(\Lambda_r)^c)\notin \mathcal E_\dis) + \pi_\dis(\partial \Lambda_r\stackrel{\ord}\longleftrightarrow \partial\Lambda_{r/2})\,.
    \end{align}
    Let us start with the first of these events. In order for a configuration to not be in $\mathcal E_\dis$, it must be the case that there is a contour in $\omega_\dis$ that connects opposite sides of $\partial \Lambda_r$ in $\mathbf{T}_n \setminus \mathbf{\Lambda}_r$. This is because there must be a connected component $\mathcal C$ of $\partial \mathbf{\omega}_\dis(\bbT_n \setminus \Lambda_r)$, such that if $\bgamma_\ord$ is the contour of $\mathbf{\omega}_\dis \cup \mathbf{\Lambda}_r$ and $\bgamma_\dis$ is the contour of $\mathbf{\omega}_\dis \setminus \mathbf{\Lambda}_r$ containing $\mathcal C$, then either: 
    \begin{itemize}
        \item the topological triviality/non-triviality of $\bgamma_\dis$ and $\bgamma_\ord$ are different; or
        \item the topological triviality/non-triviality of the connected component of $\mathbf{T}_n\setminus \bgamma_\dis$ and $\mathbf{T}_n\setminus \bgamma_\ord$ containing the open edges of $\omega_\dis$ are different. 
    \end{itemize}
    The first of these holds if $\omega_\dis \cup E(\Lambda_r)\in \Omega_{\rm tunnel}$, and the second holds if $\omega_\dis \cup E(\Lambda_r)\in \Omega_\ord$. Either of these events necessitates that, in some direction of the torus, there is both an $\ord$ and a $\dis$ connection connecting two opposite sides of $\partial \Lambda_r$, which implies in particular that $\mathcal C$ has length at least $n-r$. Therefore, by a union bound and Lemma~\ref{lem:conditional-exp-decay-contours}, 
    \begin{align*}
        \pi_\dis(\omega(E(\Lambda_r)^c)\notin \mathcal E_\dis) \le \pi_\dis(\exists v\in \partial\Lambda_r: \|\bgamma_v\|\ge n-r) \le Ce^{ - c\beta (n-r)}
    \end{align*}
    uniformly over all $p\le p_c(q,d) + o(n^{-1})$. Since in the definition of WSM within a phase we only consider $r\le n/2$, this is exponentially decaying in~$r$. 
    
    The second term of~\eqref{eq:2-terms-wsm-within-phase-coupling} is also bounded by a union bound together with Lemma~\ref{lem:conditional-exp-decay-contours} via 
    \begin{align*}
        \pi_\dis(\partial \Lambda_r\stackrel{\ord}\longleftrightarrow \partial\Lambda_{r/2}) \le \pi_\dis(\exists v\in \partial \Lambda_r: \|\bgamma_v\|\ge r/2)\le Ce^{ - c\beta r}\,.
    \end{align*}
    Putting these two bounds together, we obtain the desired property of WSM within the free phase. 
    
    To establish the analogous bound for the wired phase, the event~$\mathcal E_\ord$ and the coupling of Definition~\ref{def:wsm-within-phase-coupling} are defined symmetrically, and the remainder of the proof proceeds as before. 
\end{proof}

\section{FK dynamics from random phase initializations near criticality}\label{sec:learning-weights}
In this section, we use an interpolation argument together, crucially, with the uniformity of certain of our WSM within a phase estimates through a microscopic window around the critical point, to devise a polynomial time algorithm for learning the relative weights of the $\one$ and $\zero$ phases in order to initialize the random phase initialization appropriately. We first state a more general version of Theorem~\ref{thm:intro3}, showing fast mixing within a phase on either side of the critical point. 

\subsection{Uniform mixing time bounds from random phase initializations}
We first combine the mixing time estimates within a phase from Section~\ref{sec:mixing-within-a-phase} with the parameter regimes in which WSM within each of the wired and free phases hold at $q$ large to obtain the following result on mixing times of the FK dynamics restricted to the wired and free phases respectively. We draw attention to the fact that the mixing time bound is uniform over all~$p$ even microscopically {\it beyond\/} the critical point in each direction.

\begin{theorem}\label{thm:uniform-fast-mixing}
For every $d\ge 2$, there exists $q_0(d)$ and $C(q,d)$ such that, for all $q\ge q_0$, the following holds uniformly over sequences $p_n \ge p_c - o(1/n)$: if 
\begin{align}\label{eq:T-n}
    t_*^n = \exp(C(\log n)^{d-1})\,,
\end{align}
then the restricted FK dynamics on $\bbT_n$ has 
\begin{align*}
    \|\mathbb P(\widehat X_{t_*^n}^\one\in \cdot) - \widehat \pi_n \|_\tv = O(n^{-100})\,. 
\end{align*}
The analogous claim holds uniformly over $p_n \le p_c + o(1/n)$ with the free phase instead of the wired phase. 
\end{theorem}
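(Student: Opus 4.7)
The plan is to apply the formal Theorem~\ref{thm:intro3} to the wired phase on $\bbT_n$ (the free-phase case is symmetric), verifying each of its three hypotheses uniformly over the specified window of~$p$. Two of these are already provided by Section~\ref{sec:equilibrium-estimates}: Lemma~\ref{lem:exponential-stability-proof} gives exponential stability of the wired phase with a constant $C_1 = C_1(q,d)$ uniformly over $p \ge p_c - o(1/n)$, while Lemma~\ref{lem:large-q-wsm-within-phase} gives WSM within the wired phase with a constant $C_2 = C_2(q,d)$ in the same window.

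The remaining ingredient is the local exponential relaxation assumption~\eqref{eq:assumption-scale-m} with $\biota = \one$ at scale $m = O(\log n)$. The plan is to show that one can take $f(m) = \exp(Cm^{d-1})$ uniformly over $p \ge p_c - o(1/n)$, for a constant $C = C(q,d)$. Since $|E(\Lambda_m)| = O(m^d)$ and the per-edge Gibbs weights are bounded, one has $\log(1/\pi_{\min}) = O(m^d)$ on $\Lambda_m^\one$; by the standard reversible-chain inequality $\tmix \le O(\log \pi_{\min}^{-1})/\mathrm{gap}$, it then suffices to prove a spectral gap lower bound of the form $\exp(-Cm^{d-1})$ for the FK dynamics on $\Lambda_m^\one$. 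I would obtain this via a Cheeger-type bottleneck argument whose only significant conductance cut is between ordered and disordered configurations: the contour estimates of Lemma~\ref{lem:boudary-condition-exp-decay-contours} (which hold for $p \ge p_c - o(1/m) \supseteq p \ge p_c - o(1/n)$, since $m = O(\log n)$) bound the $\pi_{\Lambda_m^\one}$-mass of the disordered portion of the state space by $\exp(-c\beta m^{d-1})$, and after dividing by the trivial lower bound on transition rates across the cut, this yields the desired gap bound. The main obstacle is precisely this uniformity in $p$ across the microscopic window straddling $p_c$, which is exactly what the contour machinery of Section~\ref{sec:q-cluster-expansion} was designed to provide.

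With all three hypotheses in hand, Theorem~\ref{thm:intro3} yields
\[
\|\mathbb P(\widehat X_t^\one \in \cdot) - \widehat \pi_n\|_\tv \le C_0 n^d \exp(-g_n(t)/C_0),
\]
with $g_n(t) = \max\{m \le n : m f(m) \le t \wedge e^{n^{d-1}/K_0}\}$. Plugging in $f(m) = \exp(Cm^{d-1})$ and choosing $C'$ sufficiently large, the choice $t = t_*^n = \exp(C'(\log n)^{d-1})$ forces $g_n(t_*^n) \ge (d + 101) C_0 \log n$, which yields the desired $O(n^{-100})$ total variation bound (the hypothesis $t_*^n \le e^{n^{d-1}/K_0}$ is automatic since $(\log n)^{d-1} \ll n^{d-1}$). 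The free-phase statement, valid uniformly over $p_n \le p_c + o(1/n)$, is proved in exactly the symmetric manner, using the free-phase halves of Lemmas~\ref{lem:exponential-stability-proof},~\ref{lem:large-q-wsm-within-phase}, and~\ref{lem:boudary-condition-exp-decay-contours}.
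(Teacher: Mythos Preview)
Your overall strategy---apply the formal Theorem~\ref{thm:intro3} after verifying exponential stability (Lemma~\ref{lem:exponential-stability-proof}) and WSM within the wired phase (Lemma~\ref{lem:large-q-wsm-within-phase}), then plug $f(m)=\exp(Cm^{d-1})$ into $g_n$ and solve---is exactly what the paper does, and your final computation is correct.

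The gap is in how you propose to obtain the local bound $f(m)=\exp(Cm^{d-1})$. A Cheeger inequality yields a \emph{lower} bound on the spectral gap only once the conductance of \emph{every} cut has been bounded from below; identifying one particular cut (ordered vs.\ disordered) and showing that one side has small mass says nothing about $\Phi_\star$. Lemma~\ref{lem:boudary-condition-exp-decay-contours} gives only \emph{upper} bounds on contour probabilities, so it cannot lower-bound the edge flow across your proposed cut, and even if that split were the minimizing cut you would still need fast mixing \emph{within} the ordered portion of the state space on $\Lambda_m^\one$, which is essentially the problem you set out to solve.

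The paper sidesteps this entirely by invoking the crude but robust canonical-paths bound (citing~\cite{JS} and~\cite[Proposition~4.2]{GL3}), which gives $\tmix(\Lambda_m^\one)\le \exp(C_3 m^{d-1})$ uniformly over all $p$ bounded away from $0$ and~$1$; the exponent $m^{d-1}$ comes purely from the cut-width of the box and requires no information about the phase structure or the position of $p$ relative to $p_c$. With that replacement your argument goes through verbatim.
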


By Lemma~\ref{lem:unstable-phase-negligible} (together with Lemma~\ref{lem:tunnel-doesnt-matter}), when $|p_n-p_c| = \Omega(1)$ the restricted FK
dynamics in Theorem~\ref{thm:uniform-fast-mixing} in fact produces a sample that is within total variation distance $n^{-100}$ of $\pi_n$ itself. When $p_n$ is sufficiently close to $p_c$, however, both the wired and free phases have non-negligible mass under~$\pi_n$, so that neither of the restricted chains is sufficient to obtain a sample from~$\pi_n$. The following corollary shows that, close to the critical point, we can combine the restricted dynamics in each of the two phases to show that the {\it unrestricted\/} FK dynamics initialized from an appropriate mixture of the all-free and all-wired configurations also mixes quickly and produces samples close to~$\pi_n$.

\begin{corollary}\label{cor:random-phase-initialization-general-weight}
    Fix $d\ge 2$, $q\ge q_0(d)$, and a sequence $|p_n -  p_c| - o(1/n)$. Suppose that the sequence $m_n^*$ satisfies $|m_n^* - \pi_n(\widehat \Omega)|\le \eta_n$ for some sequence $\eta_n=o(1)$. Then the FK dynamics on $\bbT_n$ initialized from 
    \begin{align*}
        \nu_*^{\zero/\one} := (1-m_n^*) \delta_\zero + m_n^* \delta_\one
    \end{align*}
    satisfies the following: 
    \begin{align*}
       \| \mathbb P (X_{t_*^n} \in \cdot) - \pi_n\|_\tv \le O(n^{-100}) + 4 \eta_n\,.
    \end{align*}
\end{corollary}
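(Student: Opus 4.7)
The plan is to decompose the mixture initialization $\nu_*^{\zero/\one}$ into its two atoms, apply Theorem~\ref{thm:uniform-fast-mixing} on each to reach the respective conditional measure, and then compare the resulting convex combination to $\pi_n$ itself.

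First, starting from $X_0 = \one$, under the grand coupling the unrestricted chain $X_t^\one$ and the restricted chain $\widehat X_t^\one$ agree for all $t < \widehat \tau^\one$. Lemma~\ref{lem:monotonicity-relations} yields $\mathbb P(\widehat \tau^\one \le t_*^n) \le C(t_*^n \vee 1) e^{-n^{d-1}/C} = O(n^{-100})$, since $n^{d-1}$ dominates $(\log n)^{d-1}$ by an arbitrarily large factor for large $n$. Combined with Theorem~\ref{thm:uniform-fast-mixing} and a triangle inequality, this gives $\|\mathbb P(X_{t_*^n}^\one \in \cdot) - \widehat \pi_n\|_\tv = O(n^{-100})$, and symmetrically $\|\mathbb P(X_{t_*^n}^\zero \in \cdot) - \widecheck \pi_n\|_\tv = O(n^{-100})$.

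Next, I would decompose $\pi_n$ itself. Setting $w := \pi_n(\widehat \Omega)$, note that $\widehat \Omega \cup \widecheck \Omega = \Omega$ and the overlap $\widehat \Omega \cap \widecheck \Omega = \{|\cC_1| = \epsilon n^d\}$ is contained (outside a negligible set of configurations whose largest component is a cycle) in $\partial \widehat \Omega$, so the exponential stability bound from Lemma~\ref{lem:exponential-stability-proof} gives $\pi_n(\widehat \Omega \cap \widecheck \Omega) \le e^{-\Omega(n^{d-1})}$. Consequently
\begin{align*}
    \| \pi_n - w \widehat \pi_n - (1-w) \widecheck \pi_n \|_\tv \le e^{-\Omega(n^{d-1})}.
\end{align*}

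Combining, the law of $X_{t_*^n}$ under $\nu_*^{\zero/\one}$ is $m_n^* \mathbb P(X_{t_*^n}^\one \in \cdot) + (1 - m_n^*) \mathbb P(X_{t_*^n}^\zero \in \cdot)$, which by the first step lies within $O(n^{-100})$ of $m_n^* \widehat \pi_n + (1-m_n^*) \widecheck \pi_n$. Since $m_n^* \widehat \pi_n + (1-m_n^*)\widecheck \pi_n - w\widehat \pi_n - (1-w)\widecheck \pi_n = (m_n^* - w)(\widehat \pi_n - \widecheck \pi_n)$ has TV norm at most $|m_n^* - w| \le \eta_n$, one more triangle inequality together with the decomposition of $\pi_n$ gives the stated bound, with slack in the factor of $4$. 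The only mildly delicate point is the decomposition of $\pi_n$; the remainder is direct assembly of tools already developed.
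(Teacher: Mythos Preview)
Your approach is essentially the same as the paper's: decompose the mixture, compare the unrestricted chain from each extremal initialization to the restricted chain via the hitting-time bound of Lemma~\ref{lem:monotonicity-relations}, invoke Theorem~\ref{thm:uniform-fast-mixing}, and assemble via triangle inequalities.

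The one slip is in your overlap argument. The exceptions to $\widehat\Omega\cap\widecheck\Omega\subset\partial\widehat\Omega$ are not just configurations whose largest component is a cycle: any configuration in which the largest component has size exactly $\epsilon n^d$ and is bridgeless (or in which there is a tie for the largest component) fails to lie in $\partial\widehat\Omega$, and you have not bounded the probability of that set. A cleaner fix is to use $\partial\widecheck\Omega$ instead: any $\omega$ with $|\cC_1(\omega)|=\epsilon n^d<n^d$ has a closed edge between $\cC_1$ and its complement (the torus is connected), and opening that edge produces a component of size $>\epsilon n^d$, i.e., a configuration in $\widecheck\Omega^c$. Hence $\widehat\Omega\cap\widecheck\Omega\subset\partial\widecheck\Omega$ always, and exponential stability of the free phase (Lemma~\ref{lem:exponential-stability-proof}) gives $\pi_n(\widehat\Omega\cap\widecheck\Omega)\le\widecheck\pi_n(\partial\widecheck\Omega)\le e^{-\Omega(n^{d-1})}$, which is what you need.
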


\begin{proof}[\textbf{\emph{Proof of Theorem~\ref{thm:uniform-fast-mixing}}}]
    Our aim is to apply Theorem~\ref{thm:intro3}.  First, by Lemma~\ref{lem:exponential-stability-proof}, $\widehat \Omega$ is exponentially stable with constant $C_1$ uniformly over all $p\ge p_c- o(1/n)$, and by Lemma~\ref{lem:large-q-wsm-within-phase} WSM within the wired phase holds with constant $C_2$ uniformly over $p\ge p_c-o(1/n)$. Thus, by Theorem~\ref{thm:intro3}, there exist $C_0,K_0$ depending only on $C_1, C_2$ such that if $g_n(t)$ is as in~\eqref{eq:g(t)} for $K=K_0$, then for every $t\ge 0$, 
    \begin{align*}
        \|\mathbb P(\widehat X_{t,\bbT_n}^\one\in \cdot ) - \widehat \pi_n\|_\tv \le C_0 n^d \exp(- g_n(t)/C_0)\,.
    \end{align*}
    To plug in a value for $g_n(t)$, we recall that $f(m)$ in its definition is given by~\eqref{eq:assumption-scale-m}, and as described there, one can plug in for $f(m)$ a worst-case mixing time bound for the dynamics on~$\Lambda_m^\one$. 
    
    By a standard canonical paths argument (originating in~\cite{JS}; see e.g.,~\cite[Proposition 4.2]{GL3} for a formulation for the random-cluster model with free or wired boundary conditions), the (worst-case) mixing time of the FK dynamics on $\Lambda_m^{\one}$ is at most $\exp(C_3 m^{d-1})$ for some $C_3(q)>0$, uniformly over all $p\ge p_c(q)/2$, say, so that certainly $p\ge p_c(q)- o(1/m)$. In particular, \eqref{eq:assumption-scale-m}~is satisfied by the choice $f(m) = C_3 e^{C_3 m^{d-1}}$. In turn, $g_n(t)$ of~\eqref{eq:g(t)} satisfies $g_n(t)\ge C_4^{-1}(\log t)^{1/(d-1)}\wedge n$ for some other constant~$C_4$, again uniformly over $p\ge p_c - o(1/n)$. At this point, the desired result follows from a direct application of Theorem~\ref{thm:intro3}, since $g_n(t_*^n)$ is seen to be smaller than $n^{-100}$ if the constant $C$ in $t_*^n$ exceeds $100 C_0 C_4 d^2$. 
    
    The bound for the free phase is obtained by a symmetrical argument.
\end{proof}

\begin{proof}[\textbf{\emph{Proof of Corollary~\ref{cor:random-phase-initialization-general-weight}}}]
    By a triangle inequality, we can control the total variation distance as
\begin{align}\label{eq:vdist}
    \|\mathbb P(X_{t}^{\nu_*^{\zero/\one}} & \in \cdot)  - \pi_n\|_\tv \nonumber \\
    & \le \big\|m_n^* \mathbb P(X_{t}^{\one}\in \cdot ) + (1-m_n^*)\mathbb P(X_{t}^{\zero}\in \cdot) - m_n^* \widehat \pi_n - (1-m_n^*) \widecheck\pi_n \big\|_\tv  + 2|m_n^* - \pi_n(\widehat \Omega)|\,. 
\end{align}
By the assumption on $m_n^*$, the second term satisfies $2|m_n^* - \pi_n(\widehat \Omega)| \le 2\eta_n$. 
By another triangle inequality, the first term in~\eqref{eq:vdist} is at most
\begin{align*}
        m_n^* \|\mathbb P(X_{t}^{\one}\in \cdot )- \widehat \pi_n\|_\tv + (1-m_n^*)\|\mathbb P(X_{t}^{\zero}\in \cdot ) - \widecheck \pi_n\|_\tv\,.
\end{align*}
The two terms on the right are handled analogously, so we only consider one of them. By another triangle inequality, we have
\begin{align}\label{eqn:ajs4}
    \|\mathbb P(X_{t}^{\one}\in \cdot) - \widehat \pi_n\|_\tv \le \|\mathbb P(X_{t}^{\one}\in \cdot) - \mathbb P(\widehat X_{t}^{\one}\in \cdot)\|_\tv +  \|\mathbb P(\widehat X_{t}^{\one}\in \cdot) - \widehat \pi_n\|_\tv\,. 
\end{align}
By~\eqref{eq:monotonicity-relations} and the grand coupling, the first term in~\eqref{eqn:ajs4} is at most $\mathbb P(\widehat\tau^{\one}\le t) \le C_3 e^{ - n^{d-1}/C_3}$ while $t\le e^{n^{d-1}/K}$. The second term is exactly the term shown in Theorem~\ref{thm:uniform-fast-mixing} to be $O(n^{-100})$ when $t= t_*^n$. 
\end{proof}

\begin{proof}[\textbf{\emph{Proof of Theorem~\ref{thm:intro4}}}]
Theorem~\ref{thm:intro4} is a special case of Corollary~\ref{cor:random-phase-initialization-general-weight} exactly at $p=p_c$, where we know from~\cite[Lemma 6.1]{BCT} and Lemma~\ref{lem:equivalence-of-ord-and-hat} that if $m_n^* = q/(q+1)$ then the assumption on $m_n^*$ holds with $\eta_n = \exp( - \Omega(n))$. 

In order to get the improved bound $t_*^n= n^{o(1)}$ when $d=2$, we replace the crude canonical paths bound used on the local mixing time quantity $f(m)$ with the $\exp(n^{o(1)})$ bound of~\cite{GL1} on the mixing times on $\Lambda_n^\one$ and $\Lambda_n^\zero$ at $p=p_c$. With that choice, $g_n(t)$ from~\eqref{eq:g(t)} satisfies $g_n(t)\ge C_4^{-1} (\log t)^{\omega(1)} \wedge n$, so that $t = \exp((\log n)^{o(1)}) = n^{o(1)}$ suffices to attain a small total variation distance to stationarity. (The extra factor of $N$ in the mixing time is of course due to the change from discrete to continuous time.)
\end{proof}

\subsection{Efficiently approximating the weights for the random phase initialization}\label{subsec:weights}
Given a value of $p$, in order to obtain a good sample from $\pi_n$ on $\bbT_n$ for large $n$ using Theorem~\ref{thm:uniform-fast-mixing} and Corollary~\ref{cor:random-phase-initialization-general-weight}, it is imperative to have a good approximation~$m_n^*$, i.e. deduce, for a fixed $p$ and size~$n$, the relative weights of $\widehat \Omega$ and $\widecheck\Omega$. 
In particular, at finite~$n$, it could be that $p$ differs only microscopically from~$p_c$ but the correct weighting for the initial distribution in Corollary~\ref{cor:random-phase-initialization-general-weight} is not $(\frac{1}{q+1},\frac{q}{q+1})$ but rather some other non-negligible pair of weights. 

The following lemma describes an MCMC algorithm that is (up to polynomial factors) as efficient as the mixing times in Corollary~\ref{cor:random-phase-initialization-general-weight} itself, which produces a
weight~$m_n^*$ that is a good approximation to $\pi_n(\widehat \Omega)$ whenever the weight is non-trivial.  Our approach is to approximate the partition functions $\widehat Z_p,\widecheck Z_p$ of $\widehat \Omega$ and $\widecheck \Omega$, respectively, at parameter~$p$ by interpolating from $p=1$ down to $p_c$ with FK dynamics chains restricted to the wired phase, and up from $p=0$ to $p_c$ with FK dynamics chains restricted to the free phase.

We claim that it suffices to approximate $\pi_n(\widehat \Omega)$ (or, symmetrically, $\pi_n(\widecheck \Omega)$) when $|p- p_c| = o(1/n)$. This is because, by monotonicity, $\pi_n(\widecheck\Omega)$ decreases as $p$ grows, and by Lemma~\ref{lem:unstable-phase-negligible} there will be some $p$ within $o(1)$ of $p_c$ at which $\widecheck \Omega$ is exponentially negligible (and symmetrically for $\pi_n(\widehat\Omega)$).

\begin{lemma}\label{lem:learning-weights}
Let $d\ge 2$, $q \ge q_0(d)$, and consider any sequence $p=p_n$ such that $|p- p_c|= o(1/n)$.  
There exists an MCMC-based algorithm that, using $\mbox{poly}(n)$ samples from restricted FK dynamics run for time $t_*^n$, outputs an
estimate of $m_n^*$ satisfying
\begin{align*}
    \mathbb P\big(|m_n^* -  \pi_n(\widehat \Omega)|>n^{-100}\big) =  O(n^{-100})\,.
\end{align*}
\end{lemma}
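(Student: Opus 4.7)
The plan is to estimate
\[
\pi_n(\widehat \Omega) \;\approx\; \frac{\widehat Z_{n,p}}{\widehat Z_{n,p} + \widecheck Z_{n,p}}\,,\qquad \widehat Z_{n,p} := \sum_{\omega\in \widehat\Omega} p^{|\omega|}(1-p)^{|E|-|\omega|} q^{|\Comp(\omega)|}\,,
\]
with $\widecheck Z_{n,p}$ defined analogously over $\widecheck\Omega$. Since $\widehat \Omega \cup \widecheck \Omega = \Omega$ and the overlap $\widehat\Omega\cap \widecheck \Omega \subseteq \partial\widehat\Omega$ has $\widehat\pi_n$-mass at most $e^{-\Omega(n^{d-1})}$ by Lemma~\ref{lem:exponential-stability-proof}, the displayed ratio approximates $\pi_n(\widehat\Omega)$ up to an additive error $e^{-\Omega(n^{d-1})}$. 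At the endpoints we have the explicit values $\widehat Z_{n,1} = q$ (only $\omega=E(\bbT_n)$ contributes) and $\widecheck Z_{n,0} = q^{n^d}$ (only $\omega=\emptyset$ contributes), which serve as exact reference points.

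To approximate $\widehat Z_{n,p}/\widehat Z_{n,1}$, I would interpolate via a decreasing sequence $1 = p_0 > p_1 > \cdots > p_L = p$ with step sizes $p_{i-1}-p_i$ proportional to $p_{i-1}(1-p_{i-1})/|E|$, so that $L = \mathrm{poly}(n)$. Each single-step ratio admits the standard telescoping identity
\[
\frac{\widehat Z_{n,p_i}}{\widehat Z_{n,p_{i-1}}} \;=\; \mathbb E_{\omega\sim\widehat\pi_{n,p_{i-1}}}\!\left[\left(\tfrac{p_i}{p_{i-1}}\right)^{|\omega|}\!\left(\tfrac{1-p_i}{1-p_{i-1}}\right)^{|E|-|\omega|}\right]\,,
\]
whose integrand lies in $[e^{-O(1)}, e^{O(1)}]$ with our choice of step. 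Samples within TV distance $n^{-100}$ of $\widehat\pi_{n,p_{i-1}}$ are produced by running the restricted chain $\widehat X^{\one}_t$ for time $t^n_*$; crucially, Theorem~\ref{thm:uniform-fast-mixing} applies at every $p_{i-1} \in [p_c - o(1/n), 1]$ because its WSM-within-phase and exponential-stability inputs hold \emph{uniformly} over this range (Lemmas~\ref{lem:large-q-wsm-within-phase} and~\ref{lem:exponential-stability-proof}). Hoeffding's inequality then estimates each single-step ratio to multiplicative accuracy $1 \pm n^{-C'}$ using $\mathrm{poly}(n)$ samples, with failure probability $\le n^{-200}$; a union bound over the $L$ steps produces $\widehat U \approx \widehat Z_{n,p}/\widehat Z_{n,1}$ of multiplicative accuracy $1 \pm n^{-C''}$. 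A symmetric procedure, with an increasing sequence $0 = r_0 < \cdots < r_M = p$ and the restricted chain $\widecheck X^{\zero}_t$, produces $\widecheck U \approx \widecheck Z_{n,p}/\widecheck Z_{n,0}$, and we output
\[
m_n^* := \frac{q\,\widehat U}{q\,\widehat U + q^{n^d}\,\widecheck U}\,.
\]
A direct calculation (using $|\Delta_1 \widecheck Z - \widehat Z \Delta_2| / (\widehat Z + \widecheck Z)^2 \le \tfrac12 n^{-C''}$ by AM--GM) translates the multiplicative accuracy of $\widehat U, \widecheck U$ into additive accuracy $O(n^{-C''})$ for $m_n^*$, so $C'' = 100$ suffices.

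The main obstacle will be the error bookkeeping: the step sizes must be non-uniform (refined near $0$ and $1$) to keep the variance of each single-step estimator $O(1)$ while using only $\mathrm{poly}(n)$ steps in total, and the uniformity of Theorem~\ref{thm:uniform-fast-mixing} over the entire interpolation path must be verified. The latter is precisely where the uniformity of WSM within a phase through the microscopic window around $p_c$, established in Section~\ref{sec:equilibrium-estimates}, is essential: without it, the restricted chain initialized from $\one$ (resp.\ $\zero$) might fail to produce a good sample at some intermediate $p_{i-1}$. By Lemma~\ref{lem:unstable-phase-negligible}, if $|p - p_c|$ exceeds the microscopic window then one phase already has mass $e^{-\Omega(n^{d-1-\delta})}$, so $m_n^* \in \{0,1\}$ is an adequate output there; the algorithm above is therefore only invoked in the window where both phase masses are non-negligible, and runs in total time $\mathrm{poly}(n)\cdot t^n_*$.
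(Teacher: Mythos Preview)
Your proposal is correct and follows essentially the same route as the paper's proof: both estimate $\widehat Z_{n,p}$ and $\widecheck Z_{n,p}$ by a telescoping product of ratios along an interpolation in~$p$ from the trivial endpoints $p=1$ and $p=0$, estimate each ratio as a bounded expectation under the phase-restricted measure, and use Theorem~\ref{thm:uniform-fast-mixing} (whose uniformity over the full interpolation range is the key input) to produce the required samples via the restricted FK dynamics. Your treatment is slightly more careful than the paper's in two places: you explicitly account for the overlap $\widehat\Omega\cap\widecheck\Omega$ when relating $\pi_n(\widehat\Omega)$ to the ratio of partition functions, and you use non-uniform step sizes $\propto p(1-p)/|E|$ rather than the paper's uniform $n^{-d}$ steps, which makes the pointwise boundedness of the single-step integrand cleaner near the endpoints.
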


\begin{proof}
Observe that, by definition,  
\begin{align}\label{eq:pidefn}
    \pi_n(\widehat \Omega) = \frac{\widehat Z_p}{\widehat Z_p + \widecheck Z_p}\,.
\end{align}
Our aim is to approximate each of $Z_p$ and $Z_p$ within $1\pm O(n^{-50})$ multiplicative factors; combining these estimates as in~\eqref{eq:pidefn} yields an
estimate of $\pi_n(\widehat \Omega)$ within the same ratio. We will explain how to approximate $\widecheck Z$ using an increasing sequence of values of~$p$ starting at $p=0$; the
approximation scheme for $\widehat Z$ works symmetrically using a decreasing sequence 
starting at $p=1$.

For any $p: |p-p_c|  = o(1/n)$, construct an increasing sequence $(p^{(i)})_{i=0,...,K}$ such that $p^{(0)}=0$, $p^{(K)} =p$, and $p^{(i)} - p^{(i-1)}\le n^{-d}$ for all $i$.     
Then, we can expand $\widecheck Z_{p^{(K)}}$ as 
\begin{align}\label{eq:Z-dis-cooling-schedule}
    \widecheck Z_{p^{(K)}} = \widecheck Z_{p^{(0)}} \prod_{1\le i\le K} \frac{\widecheck Z_{p^{(i)}}}{\widecheck Z_{p^{(i-1)}}}\,.
\end{align}
It is now easily seen that each ratio in the product appearing above is actually the expectation of a bounded random variable under the Gibbs measure at parameter $p^{(i-1)}$. More precisely, letting $W_p(\omega) = p^{|\omega|}(1-p)^{|E(\bbT_n)|- |\omega|}q^{\Comp(\omega)}$, we have 
\begin{align*}
   \frac{\widecheck Z_{p^{(i)}}}{\widecheck Z_{p^{(i-1)}}} = \widecheck \pi_{n,p^{(i-1)}}\Big[\frac{W_{p^{(i)}}(\omega)}{W_{p^{(i-1)}}(\omega)}\Big] \le \max_{\omega}\Big(\frac{p^{(i)}(1-p^{(i-1)})}{p^{(i-1)}(1-p^{(i)})}\Big)^{|\omega|}\,.
\end{align*}
Using the fact that $p^{(i)} - p^{(i-1)} \le n^{-d}$ and $|\omega|\le |E(\bbT_n)|  = O(n^{d})$, the above ratio is bounded uniformly by some constant, say $A$. As a consequence, if for each $i$, $\widecheck X_t^{(i)}$ is the final configuration after $t$ steps of the restricted (to $\widehat \Omega$) FK dynamics initialized from $\zero$ on the torus $\bbT_n$ at parameter value $p^{(i)}$, then we can estimate the values of the ratios in~\eqref{eq:Z-dis-cooling-schedule} via
\begin{align}\label{eq:expectn}
    \Big|\mathbb E\Big[\frac{{W}_{p^{(i)}}(\widecheck X^{(i-1)}_{t})}{W_{p^{(i-1)}}(\widecheck X^{(i-1)}_{t})}\Big] -\widecheck \pi_{n,p^{(i-1)}}\Big[\frac{W_{p^{(i)}}(\omega)}{W_{p^{(i-1)}}(\omega)}\Big]\Big| \le A
    \Big\|\mathbb P(\widecheck X^{(i-1)}_{t}\in \cdot) - \widecheck \pi_{n,p^{(i-1)}}\Big\|_\tv\,.
\end{align}
By Theorem~\ref{thm:uniform-fast-mixing}, since $p^{(i)}\le p_c + o(1/n)$ for all $i$, the total variation distance on the right is at most $n^{-100}$ if $t \ge t_*^n$ from~\eqref{eq:T-n}. 

Notice further that, algorithmically, we can approximate the expectation on the left-hand side of~\eqref{eq:expectn} using $\ell$ independent runs of the restricted FK dynamics initialized from the $\zero$ configuration, up to an error of $\epsilon_\ell = \ell^{\delta - \frac 12}$, except with probability $\exp(-\Omega(\ell^{2\delta}))$. (Here, we are using the uniform boundedness of the ratio of weights to apply standard concentration estimates.) Taking $\ell$ to be a large polynomial in $n$, this error can be absorbed.  

Altogether, we can suppose we have a sequence of estimates $a_i$ of the above expectation, so that for all $i$, 
\begin{align*}
    \Big|a_{i-1} - \widecheck \pi_{n,p^{(i-1)}}\Big[\frac{W_{p^{(i)}}(\omega)}{W_{p^{(i-1)}}(\omega)}\Big]\Big| = O(n^{-100})\,.
    \end{align*}
    By boundedness of the expectation under consideration, we then have 
    \begin{align*}a_{i-1} = (1+O(n^{-100}))\widecheck \pi_{n,p^{(i-1)}}\Big[\frac{W_{p^{(i)}}(\omega)}{W_{p^{(i-1)}}(\omega)}\Big]\,.
\end{align*}
In that case, by~\eqref{eq:Z-dis-cooling-schedule}, since $K\le n^d$,
\begin{align*}
    Z_{p^{(0)}} \prod_{1\le i\le K} a_i  =  Z_{p^{(0)}}\prod_{1\le i\le K} \Big(\frac{\widecheck Z_{p^{(i)}}}{\widecheck Z_{p^{(i-1)}}}\Big)(1+O(n^{-100})) \le \widecheck Z_{p^{(K)}} (1+O(n^{-100}))\,.
\end{align*}
Furthermore, $Z_{p^{(0)}} = Z_{0}$ is of course easy to compute exactly (it is just the weight of a single, empty configuration). Thus, $\widecheck Z_{p^{(K)}}$ can be approximated to multiplicative factor $1\pm O(n^{-100})$ by a Markov chain approach consisting of polynomially many independent runs at $n^d$ many parameter values~$p$, each of which have run-time bounded by $t_*^n = \exp(C(\log n)^{d-1})$. Reasoning analogously from the other direction, we can also estimate $\widehat Z_{p^{(K)}}$ with a matching running time, and combine these two estimates to obtain an estimate at~$m_n^*$ with the desired accuracy. 
\end{proof}

\subsection*{Acknowledgements}
The authors thank the anonymous referees for their careful readings and helpful suggestions. R.G.\ acknowledges the support of NSF DMS-2246780 and the Miller Institute for Basic Research in Science. The research of A.S.\ is supported in part by NSF grant CCF-1815328.  Part of this work was done while A.S.\ was visiting EPFL, Switzerland.

\appendix

\section{Solution to the recurrence of Lemma~\ref{lem:recurrence-solution}}\label{sec:recurrence-solution}
In this section, we establish the bound claimed in  Lemma~\ref{lem:recurrence-solution}. The argument is standard---see e.g.~the more general bound of~\cite[Lemma 17]{HarelSpinka}---but we include a short proof for self-containedness.

\begin{proof}[\textbf{\emph{Proof of Lemma~\ref{lem:recurrence-solution}}}]
We prove the bound in two steps, first establishing that the sequence decays at least as a stretched exponential, then boosting this to the desired exponential decay. First of all, take $r= r(k) := - C_0 \log a_k$ for $C_0$ sufficiently large to be specified later. Let 
$$\kappa_\delta := \inf \{k: a_k\le 2^{- \delta k}\} \wedge \frac{n}{2}\,.$$
If $\delta$ is sufficiently small as a function of $C_0$, then while $k\le \kappa_\delta$ we have $r(k)\le k$.  
Plugging this choice of $r$ into the recurrence relation, for all $k\le \kappa_\delta$, 
\begin{align*}
    a_{2k} \le d(2C_0 \log a_k^{-1})^d a_k^2 + a_k^{C_0/C_\star}\,.
\end{align*}
There evidently exist $\epsilon_0 \in (0,1/2)$ and $C_0>0$ depending only on $C_\star$ and $d$, so that for all $k_0 \le k\le \kappa_\delta$, 
\begin{align*}
    a_{2k}\le a_{k}^{1.99}\,.
\end{align*}
Thus, for all $l\le \log_{1.99}(\kappa_\delta /k_0)$, we get 
\begin{align*}
    a_{2^l k_0} \le a_{k_0}^{1.99^l}
\end{align*}
Since $a_{k_0}\le 1/2$, we deduce that for all $k_0 \le k\le \kappa_\delta$ 
\begin{align}\label{eq:stretched-exponential-decay}
    a_{k} \le \Big(\frac{1}{2}\Big)^{(k/k_0)^{.99}}
\end{align}
using the inequality $1.99^l \ge 2^{.99 l}$. 

We now boost this to an exponential decay. 
Take $r= r(k) := k$, so that  
\begin{align*}
    a_{2k} \le d(2k)^d a_k^2 + e^{- k/C_\star}\qquad \mbox{for all $k\le n/2$}\,.
\end{align*}
Now let 
$$\psi_k := \sqrt{d}(4 k)^{d/2}\sqrt{a_k + e^{ - k/2C_\star}}\,.$$
Then, using $\sqrt{a+b} \le \sqrt a + \sqrt b$, we have for all $k\le \kappa_\delta$,
\begin{align*}
    \psi_{2k} & \le \sqrt{d} 4^{d/2}(2 k)^{d/2} \sqrt{d(2k)^d a_k^2 + 2 e^{-k/C_\star}} \\
    & \le d(4k)^d a_k  + \sqrt{d} 4^{d/2}(2k)^{d/2}\sqrt{2} e^{ - k/2C_\star}\,.
\end{align*}
In particular, this implies that for every $k\le \kappa_\delta$, 
\begin{align*}
    \psi_{2k}\le \psi_{k}^2\qquad \mbox{and}\qquad \psi_{2^\alpha k_0'}\le \psi_{k_0'}^{2^\alpha} \qquad \mbox{for every $k_0'$}
\end{align*}
and all $\alpha \le \log_{2}(\kappa_\delta/k_0')$. Let us find $k_0'$ such that $\psi_{k_0'}<1/2$, which is equivalent to asking for $k_0'$ such that 
\begin{align*}
    a_{k_0'} + e^{ - k_0'/2C_\star} < \frac12 (4k_0')^{-d}
\end{align*}
From the bound of~\eqref{eq:stretched-exponential-decay}, there would exist $k_0'(k_0,C_\star,d)$ such that such a bound holds. As a consequence, for all $k_0' \le k\le \kappa_\delta$, we would have $a_{k}\le \psi_{k} \le 2^{-k/k_0'}$
which together with the definition of $\kappa_\delta$, for $\delta$ sufficiently small depending only on $C_\star,d$ implies that for $k_0'\le k\le n/2$,
\begin{align*}
    a_k \le 2^{ - k/(\delta^{-1}\vee k_0')}
\end{align*}
The requirement that $k\ge k_0'$ can be absorbed by the pre-factor $C$, depending only on $k_0,C_\star,d$ in the claimed bound of Lemma~\ref{lem:recurrence-solution} concluding the proof.
\end{proof}

\section{Slow (worst-case) mixing of the FK dynamics near criticality}\label{app:slow-mixing}
In~\cite{BCT}, estimates of the form of Section~\ref{sec:q-cluster-expansion} were used to show that the Swendsen--Wang dynamics has $\exp(\Omega(n^{d-1}))$ (worst-case) mixing time on $\bbT_n$ at $\beta = \beta_c(q,d)$ and integer $q\ge q_0(d)$. Together with the comparison results of~\cite{Ullrich-random-cluster}, this implies that at integer $q\ge q_0(d)$, the mixing time of FK dynamics is $\exp(\Omega(n^{d-1}))$ at $p=p_c(q,d)$. For the sake of completeness, we demonstrate that one can similarly deduce $\exp(\Omega(n^{d-1}))$ mixing time at general (possibly non-integer) $q\ge q_0(d)$ and $p=p_c(q,d)$ using the same technology of Section~\ref{sec:q-cluster-expansion}. We also use the fact that those estimates apply in microscopic windows about $p_c$ to show that the exponential slowdown applies in a $o(1/n)$ window about~$p_c$. 

\begin{proposition}\label{prop:FK-slow-mixing-general-q}
    Fix $d\ge 2$, $q\ge q_0(d)$, and suppose $p= p_n$ is such that $|p-p_c| = o(1/n)$. Then the mixing time of the FK dynamics on $\bbT_n$ is $\exp(\Omega(n^{d-1}))$. 
\end{proposition}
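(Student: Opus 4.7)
The plan is a standard bottleneck argument. I take the cut $S=\widehat\Omega$; the edges of the FK Markov chain graph that cross this cut are precisely the transitions out of configurations in $\partial\widehat\Omega$ (Definition~\ref{def:exponentially-stable}). Since each edge's Poisson clock rings at rate at most $1$, the continuous-time flow satisfies $Q(\widehat\Omega,\widehat\Omega^c)\le |E(\bbT_n)|\,\pi_n(\partial\widehat\Omega)$, and the standard Cheeger-type bottleneck bound (see e.g.~\cite{LP}) gives
\[
\tmix\;\gtrsim\;\frac{\pi_n(\widehat\Omega)\,\pi_n(\widehat\Omega^c)}{Q(\widehat\Omega,\widehat\Omega^c)}\,.
\]
It therefore suffices to show that, throughout the window $|p-p_c|=o(1/n)$, both $\pi_n(\widehat\Omega),\pi_n(\widehat\Omega^c)\ge \exp(-o(n^{d-1}))$ and $\pi_n(\partial\widehat\Omega)\le \exp(-\Omega(n^{d-1}))$, which together yield $\tmix\ge \exp(\Omega(n^{d-1}))$.

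For the mass bounds on the two sides of the cut: at $p=p_c$, \cite[Eq.~(6.7)]{BCT} together with Lemma~\ref{lem:equivalence-of-ord-and-hat} imply $\pi_n(\widehat\Omega),\pi_n(\widecheck\Omega)=\Omega(1)$ when $q\ge q_0(d)$. Since $\widehat\Omega^c\supseteq\widecheck\Omega$ modulo the negligible overlap $\{|\cC_1|=\epsilon n^d\}$, we also have $\pi_n(\widehat\Omega^c)=\Omega(1)$. To transfer these bounds to any $p$ with $|p-p_c|=o(1/n)$, I apply the Radon--Nikodym comparison~\eqref{eq:radon-nikodym-p}: the ratio $\pi_{n,p}/\pi_{n,p_c}$ is bounded by $\exp(o(n^{d-1}))$ since $|p-p_c|\cdot|E(\bbT_n)|=o(n^{d-1})$, which preserves both lower bounds (with constant, rather than stretched-exponential, room to spare).

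For the boundary estimate: the window $|p-p_c|=o(1/n)$ is contained in $p\ge p_c-o(1/n)$, so Lemma~\ref{lem:exponential-stability-proof} applies directly and gives $\pi_n(\partial\widehat\Omega\mid\widehat\Omega)\le C_1 e^{-n^{d-1}/C_1}$. Since $\partial\widehat\Omega\subseteq\widehat\Omega$, multiplying by $\pi_n(\widehat\Omega)\le 1$ yields $\pi_n(\partial\widehat\Omega)\le C_1 e^{-n^{d-1}/C_1}$. Combining with the mass bounds and the bottleneck inequality above produces the claimed lower bound $\tmix\ge \exp(-o(n^{d-1}))/(\mathrm{poly}(n)\cdot \exp(-n^{d-1}/C_1))=\exp(\Omega(n^{d-1}))$.

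The main technical wrinkle is ensuring that a single choice of the constant $\epsilon$ in the definitions~\eqref{eq:def:hatOmega}--\eqref{eq:def:checkOmega} works simultaneously for the BCT-style probability lower bounds (which want $\epsilon$ strictly below the density of the giant component in the ordered phase) and for Lemma~\ref{lem:exponential-stability-proof} (which needs $\epsilon$ bounded away from both $0$ and the full density, so that the boundary event $\partial\widehat\Omega$ forces a mesoscopic contour rather than microscopic cluster fluctuations). Since the proof of Lemma~\ref{lem:exponential-stability-proof} already fixes such an $\epsilon$ for both phases uniformly across the window $|p-p_c|=o(1/n)$, this compatibility is automatic, and no further work is needed beyond assembling the three ingredients described above.
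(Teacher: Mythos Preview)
Your proof is correct and follows essentially the same conductance/bottleneck argument as the paper, relying on Lemma~\ref{lem:exponential-stability-proof} for the exponentially small boundary mass and on the BCT large-$q$ estimates for the $\Omega(1)$ mass of each phase. The only cosmetic difference is that the paper splits into the two cases $p\ge p_c$ and $p\le p_c$ (taking $\widecheck\Omega$ or $\widehat\Omega$ as the cut so that the \emph{stable} phase has mass $\Omega(1)$ directly), whereas you keep a single cut $\widehat\Omega$ throughout and transfer both mass lower bounds from $p_c$ via the Radon--Nikodym comparison~\eqref{eq:radon-nikodym-p}; either route is fine.
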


\begin{proof}
    Recall (see, e.g.,~\cite{LP}) that for a Markov chain with transition matrix $P$ and stationary distribution $\pi$, 
    \begin{align*}
        \tmix \ge \frac{1}{2\Phi_\star}\,, \qquad \mbox{where}\qquad \Phi_\star = \inf_{A\subset \Omega} \Phi(A) = \inf_{A\subset \Omega} \frac{\sum_{\omega\in A,\omega'\notin A} P(\omega,\omega')\pi(\omega)}{\pi(A)\pi(A^c)}\,.
    \end{align*}
    Evidently, the numerator of $\Phi(A)$ is at most $\pi(\partial A)$. The slow mixing will be a consequence of the exponential stability of the wired and free phases $\widehat \Omega$ and $\widecheck \Omega$, which we recall from~\eqref{eq:def:hatOmega}--\eqref{eq:def:checkOmega} and Definition~\ref{def:exponentially-stable}. To be precise, in~\eqref{eq:def:hatOmega}--\eqref{eq:def:checkOmega}, fix $\epsilon(q,d)$ such that Lemma~\ref{lem:exponential-stability-proof} holds. 
    
    If $p\ge p_c$, then take $\widecheck \Omega$ as the bottleneck set $A$ and note that 
    \begin{align*}
        \tmix \ge \frac{\pi_n(\widecheck \Omega)\pi_n(\widehat\Omega)}{2\pi_n(\partial \widecheck \Omega)} = \frac{\pi_n(\widehat\Omega)}{2\widecheck \pi_n(\partial \widecheck \Omega)}\,.
    \end{align*}
    Since $p\ge p_c$, $\pi_n(\widehat \Omega) = \Omega(1)$ while $\widecheck \pi_n(\partial \Omega) = \exp(- \Omega(n^{d-1}))$ per Lemma~\ref{lem:exponential-stability-proof} since $p\le p_c+ o(1/n)$. 
    The argument when $p\le p_c$ proceeds symmetrically, taking $\widehat \Omega$ as the set $A$. 
    \end{proof}

\bibliographystyle{abbrv}
\bibliography{references}

\begin{thebibliography}{10}

\bibitem{ABF87}
M.~Aizenman, D.~J. Barsky, and R.~Fern{\'a}ndez.
\newblock The phase transition in a general class of {I}sing-type models is
  sharp.
\newblock {\em Journal of Statistical Physics}, 47(3):343--374, 1987.

\bibitem{Alexander}
K.~S. Alexander.
\newblock {Mixing properties and exponential decay for lattice systems in
  finite volumes}.
\newblock {\em The Annals of Probability}, 32(1A):441 -- 487, 2004.

\bibitem{BD1}
V.~Beffara and H.~Duminil-Copin.
\newblock The self-dual point of the two-dimensional random-cluster model is
  critical for $q\geq 1$.
\newblock {\em Probab. Theory Related Fields}, 153(3-4):511--542, 2012.

\bibitem{BCPSV}
A.~Blanca, P.~Caputo, D.~Parisi, A.~Sinclair, and E.~Vigoda.
\newblock Entropy decay in the {S}wendsen–{W}ang dynamics on {$\mathbb Z^d$}.
\newblock In {\em Proceedings of the 53rd Annual ACM SIGACT Symposium on Theory
  of Computing}, STOC 2021, page 1551–1564, New York, NY, USA, 2021.
  Association for Computing Machinery.

\bibitem{BGVfull}
A.~Blanca, R.~Gheissari, and E.~Vigoda.
\newblock Random-cluster dynamics in $\mathbb{Z}^{2}$: {R}apid mixing with
  general boundary conditions.
\newblock {\em Ann. Appl. Probab.}, 30(1):418--459, 2020.

\bibitem{BS}
A.~Blanca and A.~Sinclair.
\newblock Random-cluster dynamics in {$\mathbb{Z}^2$}.
\newblock {\em Probab. Theory Related Fields}, 168:821--847, 2017.
\newblock Extended abstract appeared in Proc. of the 27th Annual ACM-SIAM
  Symposium on Discrete Algorithms (SODA 2016), pp. 498--513.

\bibitem{Bodineau05}
T.~Bodineau.
\newblock Slab percolation for the {I}sing model.
\newblock {\em Probability Theory and Related Fields}, 132(1):83--118, 2005.

\bibitem{BGP}
M.~Bordewich, C.~Greenhill, and V.~Patel.
\newblock Mixing of the {G}lauber dynamics for the ferromagnetic {P}otts model.
\newblock {\em Random Structures \& Algorithms}, 48(1):21--52, 2016.

\bibitem{BCHPT-Potts-all-temp}
C.~Borgs, J.~Chayes, T.~Helmuth, W.~Perkins, and P.~Tetali.
\newblock Efficient sampling and counting algorithms for the {P}otts model on
  {$\mathbb{Z}^d$} at all temperatures.
\newblock In {\em Proceedings of the 52nd Annual ACM SIGACT Symposium on Theory
  of Computing}, STOC 2020, pages 738--751, New York, NY, USA, 2020.
  Association for Computing Machinery.

\bibitem{BCT}
C.~Borgs, J.~T. Chayes, and P.~Tetali.
\newblock Tight bounds for mixing of the {S}wendsen-{W}ang algorithm at the
  {P}otts transition point.
\newblock {\em Probab. Theory Related Fields}, 152(3-4):509--557, 2012.

\bibitem{BorgsKotecky}
C.~Borgs and R.~Koteck{\'y}.
\newblock A rigorous theory of finite-size scaling at first-order phase
  transitions.
\newblock {\em Journal of Statistical Physics}, 61(1):79--119, 1990.

\bibitem{Cesi}
F.~Cesi.
\newblock Quasi-factorization of the entropy and logarithmic {S}obolev
  inequalities for {G}ibbs random fields.
\newblock {\em Probability Theory and Related Fields}, 120(4):569--584, Aug
  2001.

\bibitem{DSS-IsingSSM}
J.~Ding, J.~Song, and R.~Sun.
\newblock A new correlation inequality for {I}sing models with external fields.
\newblock {\em Probability Theory and Related Fields}, 2022.

\bibitem{DGHMT}
H.~Duminil~Copin, M.~Gagnebin, M.~Harel, I.~Manolescu, and V.~Tassion.
\newblock Discontinuity of the phase transition for the planar random-cluster
  and {P}otts models with $q>4$.
\newblock {\em Annales Scientifiques de l'ENS}, 54(6):1363--1413, 2021.

\bibitem{DCGR20}
H.~Duminil-Copin, S.~Goswami, and A.~Raoufi.
\newblock Exponential decay of truncated correlations for the {I}sing model in
  any dimension for all but the critical temperature.
\newblock {\em Communications in Mathematical Physics}, 374(2):891--921, 2020.

\bibitem{DCRT19}
H.~Duminil-Copin, A.~Raoufi, and V.~Tassion.
\newblock Sharp phase transition for the random-cluster and {P}otts models via
  decision trees.
\newblock {\em Annals of Mathematics}, 189(1):75--99, 2019.

\bibitem{DST}
H.~Duminil-Copin, V.~Sidoravicius, and V.~Tassion.
\newblock Continuity of the phase transition for planar random-cluster and
  {P}otts models with $1\leq q\leq 4$.
\newblock {\em Communications in Mathematical Physics}, 349(1):47--107, Jan
  2017.

\bibitem{DSVW}
M.~Dyer, A.~Sinclair, E.~Vigoda, and D.~Weitz.
\newblock Mixing in time and space for lattice spin systems: {A} combinatorial
  view.
\newblock {\em Random Structures \& Algorithms}, 24(4):461--479, 2004.

\bibitem{ES}
R.~G. Edwards and A.~D. Sokal.
\newblock Generalization of the {F}ortuin-{K}asteleyn-{S}wendsen-{W}ang
  representation and {M}onte {C}arlo algorithm.
\newblock {\em Phys. Rev. D (3)}, 38(6):2009--2012, 1988.

\bibitem{FK}
C.~M. Fortuin and P.~W. Kasteleyn.
\newblock On the random-cluster model. {I}. {I}ntroduction and relation to
  other models.
\newblock {\em Physica}, 57:536--564, 1972.

\bibitem{GS}
S.~Ganguly and I.~Seo.
\newblock Information percolation and cutoff for the random-cluster model.
\newblock {\em Random Structures \& Algorithms}, 57(3):770--822, 2020.

\bibitem{GL3}
R.~Gheissari and E.~Lubetzky.
\newblock The effect of boundary conditions on mixing of 2{D} {P}otts models at
  discontinuous phase transitions.
\newblock {\em Electron. J. Probab.}, 23:30 pp., 2018.

\bibitem{GL1}
R.~Gheissari and E.~Lubetzky.
\newblock Mixing times of critical two-dimensional {P}otts models.
\newblock {\em Comm. Pure Appl. Math}, 71(5):994--1046, 2018.

\bibitem{GL2}
R.~Gheissari and E.~Lubetzky.
\newblock Quasi-polynomial mixing of critical two-dimensional random cluster
  models.
\newblock {\em Random Structures and Algorithms}, 2019.

\bibitem{GhSi21}
R.~Gheissari and A.~Sinclair.
\newblock Low-temperature {I}sing dynamics with random initializations.
\newblock In {\em Proceedings of the 54th Annual ACM SIGACT Symposium on Theory
  of Computing}, STOC 2022, pages 1445--1458, New York, NY, USA, 2022.
  Association for Computing Machinery.

\bibitem{Grimmett}
G.~Grimmett.
\newblock The {R}andom-{C}luster {M}odel.
\newblock In {\em Probability on {D}iscrete {S}tructures}, volume 110 of {\em
  Encyclopaedia Math. Sci.}, pages 73--123. Springer, Berlin, 2004.

\bibitem{GuoJer}
H.~Guo and M.~Jerrum.
\newblock Random cluster dynamics for the {I}sing model is rapidly mixing.
\newblock In {\em Proceedings of the Twenty-Eighth Annual {ACM-SIAM} Symposium
  on Discrete Algorithms, {SODA} 2017}, pages 1818--1827, 2017.

\bibitem{HarelSpinka}
M.~Harel and Y.~Spinka.
\newblock {Finitary codings for the random-cluster model and other
  infinite-range monotone models}.
\newblock {\em Electronic Journal of Probability}, 27:1 -- 32, 2022.

\bibitem{HPR-Algorithmic-Pirogov-Sinai}
T.~Helmuth, W.~Perkins, and G.~Regts.
\newblock Algorithmic {P}irogov-{S}inai theory.
\newblock In {\em Proceedings of the 51st Annual ACM SIGACT Symposium on Theory
  of Computing}, STOC 2019, pages 1009--1020, New York, NY, USA, 2019.

\bibitem{JS}
M.~Jerrum and A.~Sinclair.
\newblock Approximating the permanent.
\newblock {\em SIAM J. Comput.}, 18(6):1149--1178, 1989.

\bibitem{JSIsing}
M.~Jerrum and A.~Sinclair.
\newblock Polynomial-time approximation algorithms for the {I}sing model.
\newblock {\em SIAM Journal on Computing}, 22:1087--1116, 1993.

\bibitem{LP}
D.~Levin and Y.~Peres.
\newblock {\em Markov Chains and Mixing Times (2nd ed.)}.
\newblock American Mathematical Society, Providence, RI, 2017.

\bibitem{LS-critical-2D-Ising}
E.~Lubetzky and A.~Sly.
\newblock Critical {I}sing on the square lattice mixes in polynomial time.
\newblock {\em Communications in Mathematical Physics}, 313:815--836, 2012.

\bibitem{MaOl1}
F.~Martinelli and E.~Olivieri.
\newblock Approach to equilibrium of {G}lauber dynamics in the one phase
  region. {I}. {T}he attractive case.
\newblock {\em Communications in Mathematical Physics}, 161:447--486, 1994.

\bibitem{MaOl2}
F.~Martinelli and E.~Olivieri.
\newblock {Approach to equilibrium of Glauber dynamics in the one phase region.
  II. The general case}.
\newblock {\em Communications in Mathematical Physics}, 161(3):487 -- 514,
  1994.

\bibitem{MOS}
F.~Martinelli, E.~Olivieri, and R.~H. Schonmann.
\newblock For {$2$}-{D} lattice spin systems weak mixing implies strong mixing.
\newblock {\em Comm. Math. Phys.}, 165(1):33--47, 1994.

\bibitem{Ott-weak-mixing}
S.~Ott.
\newblock Weak mixing and analyticity of the pressure in the {I}sing model.
\newblock {\em Communications in Mathematical Physics}, 377(1):675--696, 2020.

\bibitem{PWcensoring}
Y.~Peres and P.~Winkler.
\newblock Can extra updates delay mixing?
\newblock {\em Communications in Mathematical Physics}, 323(3):1007--1016,
  2013.

\bibitem{Pisztora96}
A.~Pisztora.
\newblock Surface order large deviations for {I}sing, {P}otts and percolation
  models.
\newblock {\em Probability Theory and Related Fields}, 104(4):427--466, 1996.

\bibitem{RanWil}
D.~Randall and D.~Wilson.
\newblock Sampling spin configurations of an {I}sing system.
\newblock In {\em Proceedings of the Tenth Annual ACM-SIAM Symposium on
  Discrete Algorithms}, SODA '99, pages 959--960, USA, 1999. Society for
  Industrial and Applied Mathematics.

\bibitem{Ullrich-random-cluster}
M.~Ullrich.
\newblock Swendsen--{W}ang is faster than single-bond dynamics.
\newblock {\em SIAM Journal on Discrete Mathematics}, 28(1):37--48, 2014.

\bibitem{Wulff-Nilsen}
C.~Wulff-Nilsen.
\newblock Faster deterministic fully-dynamic graph connectivity.
\newblock In {\em Proceedings of the Twenty-Fourth Annual ACM-SIAM Symposium on
  Discrete Algorithms}, SODA '13, pages 1757--1769, USA, 2013. Society for
  Industrial and Applied Mathematics.

\end{thebibliography}

\end{document}